\newtheorem{thm}{Theorem}[section]
\newtheorem{prop}[thm]{Proposition}
\newtheorem{prob}[thm]{Problem}
\newtheorem{lem}[thm]{Lemma}
\newtheorem{false statement}{False statement}
\theoremstyle{definition}
\newtheorem{definition}[thm]{Definition}
\newtheorem{claim}{Claim}
\newtheorem{conj}[thm]{Conjecture}
\makeatletter \@addtoreset{equation}{section}
\def\hh{\mathcal{H}}
\def\hf{\mathcal{F}}
\def\hg{\mathcal{G}}
\def\hk{\mathcal{K}}
\def\ha{\mathcal{A}}
\def\hb{\mathcal{B}}
\def\hd{\mathcal{D}}
\def\hc{\mathcal{C}}
\def\hi{\mathcal{I}}
\def\hj{\mathcal{J}}
\begin{document}
\title{\bf\Large On the Maximum Number of Edges in Hypergraphs with Fixed Matching and Clique Number}
\date{}
\author{Peter Frankl$^1$, Erica L.L. Liu$^2$, Jian Wang$^3$\\[10pt]
$^{1}$R\'{e}nyi Institute of Mathematics\\
Re\'{a}ltanoda u. 13-15\\
H-1053 Budapest, Hungary\\[6pt]
$^{2}$Center for Applied Mathematics\\
Tianjin University\\
Tianjin 300072, P. R. China\\[6pt]
$^{3}$Department of Mathematics\\
Taiyuan University of Technology\\
Taiyuan 030024, P. R. China\\[6pt]
E-mail:  $^1$peter.frankl@gmail.com, $^2$liulingling@tju.edu.cn, $^3$wangjian01@tyut.edu.cn
}

\maketitle

\begin{abstract}
For a $k$-graph $\hf\subset \binom{[n]}{k}$, the clique number of $\hf$ is defined to be the maximum size of a subset $Q$ of $[n]$ with $\binom{Q}{k}\subset \hf$. In the present paper, we determine the maximum number of edges in a $k$-graph on $[n]$ with matching number at most $s$ and clique number at least $q$ for $n\geq 8k^2s$ and for $q \geq (s+1)k-l$, $n\leq (s+1)k+s/(3k)-l$. Two special cases that $q=(s+1)k-2$ and $k=2$ are solved completely.
\end{abstract}


\medskip

\section{Introduction}

Let $n>k\geq1$ be integers. Let $[n]=\{1,2,\ldots,n\}$ be the standard $n$ element set and ${[n]\choose k}$ be the collection of all its $k$-subsets. For a $k$-graph $\hf\subset {[n]\choose k}$, let $\nu(\hf)$ be the matching number of $\hf$, that is, the maximum number of pairwise disjoint members of $\hf$.
One of the most important open problems in extremal set theory is the following.
\begin{conj}[Erd\H{o}s Matching Conjecture \cite{E65}]
Suppose that $s$ is a positive integer, $n\geq(s+1)k$ and $\hf\subset {[n]\choose k}$ satisfies $\nu(\hf)\leq s$. Then
\begin{align}\label{1-1}
|\hf|\leq\max\left\{{n\choose k}-{n-s\choose k},{(s+1)k-1\choose k}\right\}.
\end{align}
\end{conj}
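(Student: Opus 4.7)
The statement displayed is the Erd\H{o}s Matching Conjecture, which has been open since 1965; I will sketch how one would attack it in the regimes where partial results are available, rather than claim a full proof.

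My first move would be \emph{shifting}. The standard operations $S_{ij}$ preserve $|\hf|$ and do not increase $\nu(\hf)$, so I may assume $\hf$ is left-compressed. A crucial consequence is that for a shifted $\hf$ with $\nu(\hf)\le s$, every edge $F$ must meet the initial segment $[(s+1)k-1]$: otherwise, using shiftedness to replace large elements by small ones, one can greedily build a matching of size $s+1$ by taking $F$ together with $s$ disjoint edges inside $[(s+1)k-1]$ (or an appropriate translate), contradicting $\nu(\hf)\le s$.

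Next I would split into two regimes. When $n$ is large (the range $n\ge n_0(k,s)$ where the conjecture is known by Erd\H{o}s's original argument), I would fix a maximum matching $M=\{F_1,\ldots,F_t\}$ with $t\le s$ and vertex set $V$ of size $tk$; every edge meets $V$. For each nonempty $S\subset V$, bound the link $\hf_S=\{F\setminus S:F\in\hf,\ F\cap V=S\}$ on $[n]\setminus V$ either trivially or via an Erd\H{o}s--Ko--Rado-type argument on the remaining $k-|S|$ coordinates, then sum and compare to the conjectured star bound $\binom{n}{k}-\binom{n-s}{k}$. When $n$ is closer to $(s+1)k$, the clique $\binom{[(s+1)k-1]}{k}$ dominates, and I would combine the ``every edge meets $[(s+1)k-1]$'' property with Kruskal--Katona shadow inequalities applied to the trace of $\hf$ on $[(s+1)k-1]^c$, plus a swap argument exchanging edges between the two regimes, in the spirit of Frankl's proof that pushes the threshold down to $n\ge (2s+1)k-s$.

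The \emph{main obstacle}, and the reason the conjecture has resisted resolution for six decades, is the intermediate range of $n$ where neither extremal family dominates cleanly: around the crossover $n\approx (s+1)k + O(k)$ the star and clique bounds are comparable, shifting alone does not reveal which configuration $\hf$ approximates, and one needs a stability theorem or a Huang--Loh--Sudakov-style polynomial/random-walk method to control the ``mixed'' families. I would not expect to surmount this obstacle in one pass; the present paper's contribution of adding a clique-number constraint is precisely a device that removes the ambiguity in this hard range.
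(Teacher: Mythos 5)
You were right not to claim a proof here: the statement is the Erd\H{o}s Matching Conjecture itself, which the paper states as Conjecture 1.1 and never proves --- it is an open problem, and the paper only uses known partial results about it (Erd\H{o}s--Gallai for $k=2$, the $k=3$ case, Erd\H{o}s's large-$n$ theorem, and the ranges $n>(2s+1)k$ and $n>\frac{5}{3}sk$ for large $s$). So there is no proof in the paper to compare your attempt against, and declining to offer one is the correct assessment; your sketch of the two regimes and of the crossover obstruction is consistent with the literature the paper cites. One small remark on your shifting step: for a shifted family with $\nu(\hf)\le s$ you can get the slightly stronger conclusion that every edge meets $[sk]$ (not just $[(s+1)k-1]$), since an edge $F$ disjoint from $[sk]$ has $j$-th element at least $sk+j$, so the $s$ pairwise disjoint blocks $\{ik+1,\ldots,(i+1)k\}$, $0\le i\le s-1$, all precede $F$ in the shifting order, lie in $\hf$ by shiftedness, and together with $F$ form a matching of size $s+1$; this is the form of the fact actually used in arguments of this type (compare the proof of Proposition 1.7 in the paper). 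Beyond that, nothing in your outline constitutes progress on the conjecture, nor should it be expected to; the paper's own contribution is the modified problem with a clique-number constraint, not the conjecture itself.
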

The two simple constructions showing that \eqref{1-1} is best possible (if true) are
$$\mathcal{E}(n,k,s)=\left\{E\in{[n]\choose k}\colon E\cap [s]\neq \emptyset\right\} \text{ and } {[(s+1)k-1]\choose k}.$$
Erd\H{o}s and Gallai \cite{EG} proved \eqref{1-1} for $k=2$. The $k=3$ case is settled in \cite{F12}. For general $k$ Erd\H{o}s proved that \eqref{1-1} is true and up to isomorphic $\mathcal{E}(n,k,s)$ is the only optimal family provided that $n>n_0(k,s)$. The bounds for $n_0(k,s)$ were subsequently improved by Bollob\'{a}s, Daykin and Erd\H{o}s \cite{BDE}, Huang, Loh and Sudakov \cite{HLS}. The current best bounds establish \eqref{1-1} for $n>(2s+1)k$ (\cite{F13}) and for $s>s_0$, $n>\frac{5}{3}sk$ (\cite{FK}).

The case $s=1$ has received a lot of attention. If $\nu(\hf)=1$ then $\hf$ is called intersecting.

\begin{thm}[Erd\H{o}s-Ko-Rado Theorem \cite{EKR}]
Suppose that $n\geq2k$ and $\hf\subset {[n]\choose k}$ is intersecting. Then
\begin{align*}
|\hf|\leq{n-1\choose k-1}.
\end{align*}
\end{thm}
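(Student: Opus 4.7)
My plan is to use Katona's cyclic permutation argument, which gives the slickest proof of the Erd\H{o}s--Ko--Rado theorem. I would work with all $n!$ permutations of $[n]$ viewed as cyclic arrangements, and double-count the set of pairs $(F,\pi)$ in which $F\in\mathcal{F}$ appears as an arc of $k$ consecutive positions in $\pi$.

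The key combinatorial lemma is that in any single cyclic arrangement, at most $k$ of the $n$ length-$k$ arcs can pairwise intersect, provided $n\geq 2k$. To prove it, I would fix one arc $A_i = \{a_i, a_{i+1}, \ldots, a_{i+k-1}\}$ in the family and observe that any other arc $A_j$ meeting $A_i$ must have its starting index $j$ in $\{i-k+1, \ldots, i-1\} \cup \{i+1, \ldots, i+k-1\}$ (indices taken mod $n$). These $2(k-1)$ positions split naturally into $k-1$ pairs of the form $(i-k+t,\ i+t)$ for $t=1,\ldots,k-1$; since $n\geq 2k$, the two arcs in each such pair are disjoint and therefore cannot both lie in $\mathcal{F}$. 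Selecting at most one arc per pair, plus $A_i$ itself, gives at most $k$ arcs in total. This arc lemma is the only place the hypothesis $n\geq 2k$ is needed, and it is the one nontrivial step in the proof.

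For the double count: each permutation contributes at most $k$ valid arcs by the lemma, so the number of pairs $(F,\pi)$ is at most $n!\cdot k$. On the other side, each fixed $F\in\mathcal{F}$ appears as an arc in exactly $n\cdot k!\cdot (n-k)!$ permutations (choose the starting position, order the elements of $F$ within the arc, then order the complement). Combining the two counts gives
\[
|\mathcal{F}|\cdot n\cdot k!\cdot (n-k)! \;\leq\; n!\cdot k,
\]
which rearranges to $|\mathcal{F}|\leq \binom{n-1}{k-1}$. Once the arc lemma is in place the remaining calculation is mechanical, so I expect no real obstacle beyond verifying the pairing argument carefully.
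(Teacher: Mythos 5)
Your proof is correct. Note that the paper does not prove this statement at all: the Erd\H{o}s--Ko--Rado theorem is stated as a classical result and cited to \cite{EKR}, so there is no in-paper argument to compare against. Your route is Katona's cyclic permutation (circle) method, and all the steps check out: the arc lemma is proved correctly, since the $2(k-1)$ possible starting positions of arcs meeting $A_i$ do split into the $k-1$ pairs $(i-k+t,\,i+t)$, and for $n\geq 2k$ the two arcs in each pair occupy $2k$ consecutive positions and hence are disjoint, so an intersecting family contains at most one of them; the double count $|\mathcal{F}|\cdot n\cdot k!\cdot(n-k)!\leq n!\cdot k$ is also right, because a fixed $F$ occupies a cyclic arc in a permutation in exactly $n\cdot k!\cdot(n-k)!$ ways and each permutation carries at most $k$ arcs from $\mathcal{F}$. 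It is worth observing that this is exactly the machinery the paper itself builds on elsewhere: Section 2 extends Katona's method \cite{K72} and the direct-product Erd\H{o}s--Ko--Rado theorem of \cite{F96} (Proposition \ref{prop-1} and Lemma \ref{cross-interval}) to a cross-intersecting setting, so your argument is the $s=1$, single-ground-set prototype of the averaging-over-cyclic-permutations technique used in the proof of Theorem \ref{cross-direct}. The original proof in \cite{EKR} instead uses shifting and induction, which is also the compression technique the paper recalls in Section 1; either approach is standard, and yours is complete as written.
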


There is an important stability result related to the Erd\H{o}s-Ko-Rado Theorem.

\begin{thm}[Hilton-Milner Theorem \cite{HM}]
Suppose that $n>2k$, $\hf\subset {[n]\choose k}$ and $\cap_{F\in \hf}F=\emptyset$. Then
\begin{align}\label{1-3}
|\hf|\leq{n-1\choose k-1}-{n-k-1\choose k-1}+1.
\end{align}
\end{thm}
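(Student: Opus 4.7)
The plan is to apply the shifting technique. Recall that for $1 \leq i < j \leq n$, the $(i,j)$-shift $S_{ij}(\hf)$ replaces each $F \in \hf$ by $(F \setminus \{j\}) \cup \{i\}$ whenever the latter is not already in $\hf$; standardly, $|S_{ij}(\hf)| = |\hf|$ and $S_{ij}$ preserves the intersecting property. I would apply shifts until the family is stable under all $S_{ij}$.

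The delicate point is that shifting may enlarge $\bigcap_{F \in \hf} F$: an initially non-star intersecting family can become a star after shifting, destroying the hypothesis $\bigcap_{F \in \hf} F = \emptyset$. One resolves this by either performing shifts only while the family remains non-star, or by analyzing the star case separately (the Erd\H{o}s--Ko--Rado bound $\binom{n-1}{k-1}$ is larger than the target, so some extra care is needed). Either way, one reduces to the case where $\hf$ is shifted, intersecting, and contains at least one set avoiding~$1$. The shifted structure then forces $F_0 := \{2, 3, \ldots, k+1\}$, the lexicographically smallest $k$-set avoiding~$1$, to lie in $\hf$.

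Now split $\hf = \hf_1 \sqcup \hf_2$ with $\hf_1 = \{F \in \hf : 1 \in F\}$ and $\hf_2 = \{F \in \hf : 1 \notin F\}$. Every $F \in \hf$ meets $F_0$, so if $\hf_2 = \{F_0\}$ then $|\hf_1|$ is at most the number of $k$-sets through~$1$ meeting $\{2, \ldots, k+1\}$, namely $\binom{n-1}{k-1} - \binom{n-k-1}{k-1}$, giving exactly the Hilton--Milner bound.

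The main obstacle is the case $|\hf_2| \geq 2$. Each extra $G \in \hf_2 \setminus \{F_0\}$ imposes the cross-intersection constraint $F \cap G \neq \emptyset$ on every $F \in \hf_1$. A careful inclusion-exclusion shows that in the worst case (when $|F_0 \cap G| = k-1$), such a $G$ removes at least $\binom{n-k-2}{k-2}$ sets from $\hf_1$, while contributing only $+1$ to $|\hf_2|$. Since $n > 2k$ and $k \geq 2$ give $\binom{n-k-2}{k-2} \geq 1$, the trade-off is non-positive and the Hilton--Milner bound is preserved. Iterating (or packaging as an induction on $|\hf_2|$), together with the careful reduction to the shifted non-star case, constitutes the technical heart of the proof.
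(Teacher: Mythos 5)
The paper does not prove the Hilton--Milner theorem at all; it is quoted as a classical result with the citation \cite{HM}, so your sketch has to stand on its own. Its first two steps (shift, locate $F_0=[2,k+1]$, split $\hf=\hf_1\sqcup\hf_2$ by the element $1$, and settle the case $\hf_2=\{F_0\}$) are fine, but the case $|\hf_2|\geq 2$ --- which is the whole theorem --- is not actually proved. The per-set trade-off is only valid for one extra set: for a single $G$ the number of $k$-sets through $1$ meeting $F_0$ and disjoint from $G$ is indeed at least $\binom{n-k-2}{k-2}$, but for several $G$'s these ``removed'' sets overlap, so you cannot add the losses; what you need is that the \emph{union} of the removed sets has size at least $|\hf_2|-1$, i.e.\ a cross-intersecting Hilton--Milner-type inequality for the pair $\hf(1)\subset\binom{[2,n]}{k-1}$, $\hf(\bar1)\subset\binom{[2,n]}{k}$. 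That inequality is essentially the full strength of the theorem and does not follow from the local accounting; note also that ``cross-intersecting and $\hf(\bar 1)\neq\emptyset$'' alone is insufficient (take both families to be stars at the element $2$: then $|\hf(1)|+|\hf(\bar1)|$ exceeds the bound), so non-triviality/shiftedness must enter this final count in an essential way. The proposed induction on $|\hf_2|$ also does not close: deleting one $G\in\hf_2$ only helps if it frees up a new admissible $(k-1)$-set for $\hf_1$, i.e.\ a set disjoint from $G$ meeting every other member of $\hf_2$, and such a set need not exist. For instance, with $k=3$ and $\hf_2=\{H\in\binom{[2,n]}{3}\colon |H\cap\{2,3,4\}|\geq 2\}$, no choice of $G\in\hf_2$ admits a $2$-set disjoint from $G$ that meets all remaining members, so the $\pm1$ bookkeeping fails (the bound holds for such families for different reasons --- which is exactly the point: a global argument is required). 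The standard ways to supply this missing heart are Frankl--F\"uredi's shifting argument, Katona's cycle method, or lexicographic compression via the Kruskal--Katona/Hilton lemma --- the same technique used in this paper's proof of Theorem \ref{prop6-1}.

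A secondary gap: the reduction to a shifted non-star family is asserted rather than carried out. Shifting preserves size and the intersecting property, but it can turn a non-star into a star, and ``perform shifts only while the family remains non-star'' does not obviously terminate in a fully shifted family; the classical proofs either organize the shifts carefully or analyze the near-star situation separately, and this is a genuine (if standard) step that your sketch leaves open.
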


The Hilton-Milner family,
$$\hh(n,k)=\left\{H\in{[n]\choose k}\colon 1\in H, H\cap[2,k+1]\neq\emptyset\right\}\cup \{[2,k+1]\}$$
shows that (\ref{1-3}) is best possible. For $k\geq 4$ and $k=2$, these are unique. For $k=3$ there is one more example (up to isomorphism):
$$\mathcal{T}(n,3)=\left\{T\in{[n]\choose 3}\colon |T\cap[3]|\geq2\right\}.$$

Let us define the covering number, $\tau(\hf)$ of a family $\hf$:
$$\tau(\hf)=\min\{|T|\colon T\cap F\neq\emptyset \text{ for all }F\in\hf\}.$$
The inequalities $\nu(\hf)\leq\tau(\hf)\leq k\nu(\hf)$ are easy to check  for $\hf\subset{[n]\choose k}$. Let us note that  $\nu(\mathcal{E}(n,k,s))=\tau(\mathcal{E}(n,k,s))=s$.

Combining $\mathcal{E}(n,k,s)$ and $\hh(n,k)$ one can define
\begin{align*}
\mathcal{B}(n,k,s)=&\left\{B\in{[n]\choose k}\colon B\cap[s-1]\neq\emptyset\right\}\cup\{[s+1,s+k]\}\\
&\cup \left\{B\in{[s,n]\choose k}\colon s\in B, B\cap[s+1,s+k]\neq\emptyset\right\}.
\end{align*}

\begin{thm}[\cite{BDE}]
Suppose that $n>2k^3s$. Let $\hf\subset{[n]\choose k}$ satisfy $\nu(\hf)=s<\tau(\hf)$. Then
\begin{align*}
|\hf|\leq|\mathcal{B}(n,k,s)|.
\end{align*}
\end{thm}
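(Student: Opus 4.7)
My strategy is to identify an $(s-1)$-element ``core'' $W = \{v_1, \ldots, v_{s-1}\}$ such that the residual family $\hf^* := \{F \in \hf : F \cap W = \emptyset\}$ is a non-trivial intersecting family; the bound on $|\hf|$ then follows from summing the trivial estimate on edges meeting $W$ with the Hilton-Milner estimate on $\hf^*$, and the result will match $|\mathcal{B}(n,k,s)|$ term-by-term.

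I would construct $W$ greedily: at step $i$, choose $v_i$ to be a vertex of maximum degree in the residual family $\hf_{i-1} := \{F \in \hf : F \cap \{v_1, \ldots, v_{i-1}\} = \emptyset\}$, and set $\hf^* := \hf_{s-1}$. The central structural claim is that $\nu(\hf^*) = 1$. Suppose for contradiction that $E_1, E_2 \in \hf^*$ are disjoint. I would then extend $\{E_1, E_2\}$ to a matching $\{E_1, E_2, F_1, \ldots, F_{s-1}\}$ of size $s+1$ in $\hf$ by picking, for $i = 1, \ldots, s-1$, an edge $F_i \in \hf$ with $v_i \in F_i$ and $F_i$ disjoint from $E_1, E_2, F_1, \ldots, F_{i-1}$. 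At the $i$-th step, the set of forbidden vertices has size at most $(s+1)k$, so at most $(s+1)k\binom{n-2}{k-2}$ edges through $v_i$ are unusable; it suffices to verify $d_{\hf}(v_i) > (s+1)k\binom{n-2}{k-2}$. The greedy selection yields $d_{\hf}(v_i) \geq d_{\hf_{i-1}}(v_i) \geq k|\hf_{i-1}|/n$, and the hypothesis $n > 2k^3 s$ is calibrated so that this lower bound exceeds the threshold throughout the iteration whenever $|\hf|$ is close to $|\mathcal{B}(n,k,s)|$; when the residual size $|\hf_{i-1}|$ should drop too low, $|\hf|$ itself is then already well below $|\mathcal{B}(n,k,s)|$, giving the theorem directly.

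Once $\hf^*$ is established to be intersecting, I would observe that $\bigcap_{F \in \hf^*} F = \emptyset$, since any common vertex $v$ would make $W \cup \{v\}$ an $s$-element cover of $\hf$, contradicting $\tau(\hf) \geq s+1$. Because $n - (s-1) > 2k$, the Hilton-Milner Theorem applied to $\hf^* \subseteq \binom{[n] \setminus W}{k}$ gives
\[ |\hf^*| \leq \binom{n-s}{k-1} - \binom{n-s-k}{k-1} + 1. \]
The edges of $\hf$ meeting $W$ number at most $\binom{n}{k} - \binom{n-s+1}{k}$, and adding the two estimates reproduces exactly $|\mathcal{B}(n,k,s)|$.

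The main obstacle is the matching-extension step, that is, showing that the greedily chosen core $W$ in fact leaves an intersecting residue. The challenge is to propagate a usable lower bound on $d_{\hf}(v_i)$ across all $s-1$ iterations, without assuming $\hf$ is structurally close to $\mathcal{B}(n,k,s)$. The explicit constant $n > 2k^3 s$ enters precisely here: it ensures that the ratio $(s+1)k\binom{n-2}{k-2}/\binom{n-1}{k-1} \approx sk^2/n$ is small enough that the average-degree lower bound suffices at every greedy step, with the remaining factor of $k$ absorbing the degree losses caused by the previously selected core vertices.
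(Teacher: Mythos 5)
First, a remark on scope: the paper does not prove this theorem at all --- it is quoted from Bollob\'as--Daykin--Erd\H{o}s \cite{BDE} --- so your argument has to stand on its own. Your endgame is fine: if one can produce an $(s-1)$-set $W$ such that $\hf^*=\{F\in\hf\colon F\cap W=\emptyset\}$ is intersecting, then indeed $\bigcap_{F\in\hf^*}F=\emptyset$ (a common vertex $v$ would make $W\cup\{v\}$ a cover of size $s$, contradicting $\tau(\hf)>s$), Hilton--Milner on the ground set $[n]\setminus W$ of size $n-s+1>2k$ gives $|\hf^*|\le\binom{n-s}{k-1}-\binom{n-s-k}{k-1}+1$, and adding $\binom{n}{k}-\binom{n-s+1}{k}$ for the edges meeting $W$ reproduces $|\mathcal{B}(n,k,s)|$ exactly. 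The fatal gap is the central claim $\nu(\hf^*)=1$, and the mechanism you offer for it cannot work. Your extension step needs $d_{\hf}(v_i)>(s+1)k\binom{n-2}{k-2}$, and you propose to guarantee this from the average-degree bound $d_{\hf_{i-1}}(v_i)\ge k|\hf_{i-1}|/n$. But $k|\hf_{i-1}|/n>(s+1)k\binom{n-2}{k-2}$ is equivalent to $|\hf_{i-1}|>(s+1)n\binom{n-2}{k-2}=(s+1)(k-1)\tfrac{n}{n-1}\binom{n-1}{k-1}\ge(s+1)\binom{n-1}{k-1}$, whereas every family with $\nu(\hf)\le s$ and $n\ge(s+1)k$ satisfies $|\hf_{i-1}|\le|\hf|\le s\binom{n-1}{k-1}$ (Lemma~\ref{gbEMC}). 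So the sufficient condition fails at \emph{every} step, already at step $1$ with $\hf_0=\hf$: the average degree falls short of the required threshold by a factor of roughly $k-1$, uniformly in $n$. The hypothesis $n>2k^3s$ cannot ``calibrate'' this, because the comparison reduces to $|\hf_{i-1}|$ versus $(s+1)(k-1)\binom{n-1}{k-1}$, in which $n$ enters both sides identically; your heuristic ratio $sk^2/n$ would only be relevant if you had a vertex of degree comparable to $\binom{n-1}{k-1}$, which the averaging argument does not provide.

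Consequently the fallback clause (``if $|\hf_{i-1}|$ is too small, then $|\hf|$ is already well below $|\mathcal{B}(n,k,s)|$'') would have to be invoked at the very first step for every admissible family, i.e.\ it proves nothing; and it is false as stated, since $\hf=\mathcal{B}(n,k,s)$ itself is ``too small'' by your criterion yet attains the bound with equality (its residual after deleting the stars of $[s-1]$ has size only $\binom{n-s}{k-1}-\binom{n-s-k}{k-1}+1$, of order $k\binom{n-2}{k-2}$). The real content of the theorem is precisely the structural step you skip: one must use $\nu(\hf)=s$ (not just degree averaging) to find the core. A workable route is to set $D=\{v\colon d_{\hf}(v)>(s+1)k\binom{n-2}{k-2}\}$ and show by greedy matching extension that $|D|+\nu(\hf(\overline{D}))\le s$, so that $|D|\le s-1$ (note $|D|=s$ would make $D$ a cover, contradicting $\tau(\hf)>s$); the case $|D|=s-1$ then feeds into your endgame, but the case $|D|\le s-2$ still requires a genuine counting argument (bounding $\hf(\overline{D})$ via its small matching number \emph{and} small maximum degree), and it is exactly there that the quantitative hypothesis on $n$ is spent. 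As it stands, your proposal assumes the hard part.
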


Let us introduce one more parameter, the clique number $\omega(\hf)$ for $\hf\subset {[n]\choose k}$:
$$\omega(\hf)=\max\left\{q\colon \exists Q\in{[n]\choose q}, {Q\choose k}\subset \hf\right\}.$$
Let us note that $\omega(\mathcal{E}(n,k,s))=k+s-1$, $\omega(\mathcal{B}(n,k,s))=k+s$. In particular, $\omega(\hh(n,k))=k+1$.

Li, Chen, Huang and Lih \cite{LCHL}  proposed to investigate the maximum size of an intersecting family $\hf\subset{[n]\choose k}$ satisfying $\omega(\hf)=q$, $k<q<2k$. They proposed the following natural example:
$$\mathcal{L}(n,k,q)={[q]\choose k}\cup\left\{L\in{[n]\choose k}\colon 1\in L, |L\cap [q]|> q-k\right\}.$$
The best possible bound $|\hf|\leq |\mathcal{L}(n,k,q)|$ is proved in \cite{LCHL} in the cases $q=2k-1,2k-2,2k-3$ or $n$ sufficiently large with respect to $q$.

\begin{thm}[\cite{F19}]\label{thm-f19}
Let $n>2k$, $k<q<2k$. Suppose that $\hf\subset {[n]\choose k}$ satisfies $\nu(\hf)=1$, $\omega(\hf)\geq q$. Then
\begin{align*}
|\hf|\leq |\mathcal{L}(n,k,q)|.
\end{align*}
\end{thm}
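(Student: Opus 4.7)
The plan is to fix a $q$-clique $Q$ realizing $\omega(\hf)\ge q$ and normalize to $Q=[q]$, so that $\binom{[q]}{k}\subset\hf$. A preliminary observation is that $\hf$ intersecting forces $|F\cap[q]|\ge q-k+1$ for every $F\in\hf$: if some $F$ met $[q]$ in fewer than $q-k+1$ elements, then $[q]\setminus F$ would contain a $k$-subset (itself in $\binom{[q]}{k}\subset\hf$) disjoint from $F$. Setting $\hf_1=\{F\in\hf:F\not\subset[q]\}$, each $F\in\hf_1$ has trace size $j=|F\cap[q]|\in\{q-k+1,\ldots,k-1\}$, and a brief inclusion-exclusion gives
\[|\mathcal{L}(n,k,q)|=\binom{q}{k}+\sum_{j=q-k+1}^{k-1}\binom{q-1}{j-1}\binom{n-q}{k-j},\]
so the target reduces to bounding $|\hf_1|$ by the displayed sum.

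Next I would apply shifting. Since $\binom{[q]}{k}$ is itself shifted and any shift $s_{ij}$ preserves both the intersecting property and $|\hf|$, one may assume $\hf$ shifted while still containing $\binom{[q]}{k}$. For each $A\subset[q]$ of admissible size $j$ put $\hb_A=\{B\in\binom{[q+1,n]}{k-j}\colon A\cup B\in\hf_1\}$; the intersecting condition on $\hf_1$ translates into the cross-intersection condition that $\hb_{A_1}$ and $\hb_{A_2}$ be cross-intersecting on $[q+1,n]$ whenever $A_1\cap A_2=\emptyset$ (possibly with $|A_1|\ne|A_2|$). For $n>2k$ this constraint is nontrivial: two disjoint traces cannot both be populated to full size $\binom{n-q}{k-j}$.

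The core of the proof is then an exchange/swap argument showing that any configuration containing a pair of disjoint contributing traces $A_1,A_2$ can be modified — without decreasing $|\hf_1|$ — into a configuration in which $\{A:\hb_A\ne\emptyset\}$ is itself an intersecting family on $[q]$; iterating, we reach a shifted intersecting configuration of traces, and counting yields $|\hb_A|\le\binom{n-q}{k-j}$ per trace with at most $\binom{q-1}{j-1}$ traces of each size $j$ (the Erd\H{o}s--Ko--Rado bound, supplemented by a direct argument when $j>q/2$). Summing over $j$ produces the desired inequality. The main obstacle will be executing the swap when disjoint traces have different sizes $j_1\ne j_2$, since the cross-intersecting inequality on $[q+1,n]$ then compares families of different uniformities and the bookkeeping for a net gain is delicate. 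A further subtlety worth noting is that the extremal configuration is not unique — for $k=3$, $q=k+1$ the family $\mathcal{T}(n,3)$ recalled in the excerpt attains the bound using a non-star intersecting trace pattern — so the exchange argument must accommodate all such intersecting trace configurations rather than hinge on the specific star-of-$1$ structure used to define $\mathcal{L}(n,k,q)$.
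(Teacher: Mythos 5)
The paper itself offers no proof of this theorem: it is quoted from \cite{F19}, so your argument can only be judged on its own merits and against the machinery the paper develops for the analogous $\nu\leq s$ problem. Your setup is sound and matches the standard skeleton: fixing the clique $[q]$, noting $|F\cap[q]|\geq q-k+1$ for every $F\in\hf$, computing $|\mathcal{L}(n,k,q)|=\binom{q}{k}+\sum_{j=q-k+1}^{k-1}\binom{q-1}{j-1}\binom{n-q}{k-j}$, and recording that the links $\hb_{A_1},\hb_{A_2}$ of disjoint traces must be cross-intersecting. This is exactly the decomposition the present paper uses in Lemma 3.1 for $s\geq 2$. The problem is that the decisive step of your plan is missing, and the counting it is supposed to deliver is false as stated.

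Concretely, your endgame --- an exchange argument forcing the contributing traces to form an intersecting family, followed by ``at most $\binom{q-1}{j-1}$ traces of each size $j$ (EKR)'' with $|\hb_A|\leq\binom{n-q}{k-j}$ --- cannot give the bound level by level. Whenever $q\leq 2k-3$ there are admissible levels $j$ with $2j>q$ (recall $j$ runs up to $k-1$); at such a level every family of $j$-subsets of $[q]$ is automatically intersecting, so the swap's conclusion is vacuous and one may have all $\binom{q}{j}>\binom{q-1}{j-1}$ traces with nonempty links, making the per-level count exceed the target. The real constraint at these levels comes from their interaction with the complementary level $q-j$ (disjoint traces exist only when the two sizes sum to at most $q$), and what is needed is a joint bound on $|\hf_j|+|\hf_{q-j}|$ of cross-intersecting direct-product type --- precisely the kind of statement proved by Katona's cyclic-permutation method, as in Theorem \ref{directEKR} and its extension Theorem \ref{cross-direct} here, and exploited via the pairing $l+l'=k+r$ in Lemma \ref{lem3.1}. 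Your sketch neither formulates nor proves such a lemma, and the ``exchange/swap'' meant to replace it is unspecified; you yourself flag the unequal-size case as delicate, and there is no evident local exchange that eliminates a disjoint pair of contributing traces without possibly decreasing $|\hf_1|$ (note also that the theorem is claimed for all $n>2k$, so no slack from large $n$ is available to absorb losses). As it stands the proposal is a reasonable plan with the correct decomposition but a genuine gap at its core.
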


In the present paper, we investigate the corresponding problem for families $\hf\subset{[n]\choose k}$ with $\nu(\hf)\geq2$.

\begin{prob}\label{pr}
Let $k, s$ be positive integers, $n\geq(s+1)k$, $q>k$. Determine or estimate
$$m(n,q,k,s):=\max\left\{|\hf|\colon \hf\subset{[n]\choose k},\nu(\hf)\leq s, \omega(\hf)\geq q\right\}.$$
\end{prob}

Since $\nu({[q]\choose k})=\lfloor q/k\rfloor$, $m(n,q,k,s)=0$ unless $q<k(s+1)$. Adding a new $k$-element set to ${[(s+1)k-1]\choose k}$ increases the matching number to $s+1$. Thus,
\begin{align*}
m(n,(s+1)k-1,k,s)={(s+1)k-1\choose k}.
\end{align*}
Consequently we consider only the cases $k<q<(s+1)k$.

The following operation, called shifting, was invented by Erd\H{o}s, Ko and Rado \cite{EKR}. Let $1\leq i< j\leq n$, $\hf\subset{[n]\choose k}$. Define
$$S_{ij}(\hf)=\{S_{ij}(F)\colon F\in\hf\}$$
where
$$S_{ij}(F)=\left\{
                \begin{array}{ll}
                  (F\setminus\{j\})\cup\{i\}, & j\in F, i\notin F \text{ and } (F\setminus\{j\})\cup\{i\}\notin \hf; \\
                  F, & \hbox{otherwise.}
                \end{array}
              \right.
$$
It is well known (cf. \cite{F87}) that shifting does not increase the matching number. It is clear that it does not decrease the clique number.

Let $(a_1, \ldots, a_k)$ denote a $k$-set with its elements ordered increasingly, i.e., $a_1<\cdots <a_k$. Define the shifting partial order $\prec$ by setting $(a_1, \ldots, a_k)\prec(b_1, \ldots, b_k)$ iff $a_l\leq b_l$ for all $1\leq l\leq k$.

\begin{definition}
The family $\hf\subset {[n]\choose k}$ is called shifted if $G\prec F$ and $F\in\hf$ always imply $G\in \hf$.
\end{definition}

\begin{prob}
Let $k, s$ be positive integers, $n\geq(s+1)k$, $(s+1)k>q>k$. Determine or estimate the function
$$m^*(n,q,k,s):=\max\left\{|\hf|\colon \hf\subset{[n]\choose k}\text{ is shifted }, \nu(\hf)=s, \omega(\hf)=q\right\}.$$
\end{prob}

\begin{prop}\label{prop-2}
Suppose that $\hf \subset \binom{[n]}{k}$, $\hf$ is shifted and $\nu(\hf)\leq s$, then (i), (ii) or (iii) holds.

(i) $\hf \subset \mathcal{E}(n,k,s)$;

(ii) $s+k \leq \omega(\hf)<sk+k-1$;

(iii) $\hf = \binom{[sk+k-1]}{k}$.
\end{prop}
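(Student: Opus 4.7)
The plan is to partition the argument by the value of $\omega := \omega(\hf)$ and use shiftedness to pin any maximum clique onto an initial segment, giving cases (iii), (ii), (i) from the top down.

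The starting observation is that $\omega \le (s+1)k-1 = sk+k-1$: any clique on $(s+1)k$ vertices contains $s+1$ disjoint $k$-subsets, contradicting $\nu(\hf)\le s$. Moreover, if $\binom{Q}{k}\subset \hf$ with $|Q|=q$, then because $\hf$ is shifted we get $\binom{[q]}{k}\subset \hf$: every $K\in \binom{[q]}{k}$ is dominated in the shifting order by $K'$ consisting of the $k$ largest elements of $Q$, and $K' \in \binom{Q}{k}\subset\hf$. So a witness of the clique number may always be taken to be an initial segment $[q]$.

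In the top range $\omega = sk+k-1$, we have $\binom{[sk+k-1]}{k}\subset\hf$. If some $F\in\hf$ satisfied $F\not\subset[sk+k-1]$, then $|F\cap [sk+k-1]|\le k-1$, so $[sk+k-1]\setminus F$ contains at least $sk$ elements and hence hosts an $s$-matching inside $\binom{[sk+k-1]}{k}\subset\hf$; together with $F$ this contradicts $\nu(\hf)\le s$. So $\hf = \binom{[sk+k-1]}{k}$, yielding (iii). When $s+k\le \omega \le sk+k-2$ there is nothing to prove, as this is exactly case (ii).

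The heart of the argument, which I expect to be the main step, is the low-clique range $\omega\le s+k-1$, where one must derive $\hf\subset\mathcal{E}(n,k,s)$. Suppose instead some $F = \{a_1<\cdots<a_k\}\in\hf$ has $F\cap[s]=\emptyset$. Then $a_i\ge s+i$ for every $i$, so $\{s+1,\ldots,s+k\}\prec F$ and hence $\{s+1,\ldots,s+k\}\in\hf$ by shiftedness. The key propagation is that for every $G=\{g_1<\cdots<g_k\}\in\binom{[s+k]}{k}$, a pigeonhole argument on the $k-i$ available slots above $s+i$ forces $g_i\le s+i$, so $G\prec\{s+1,\ldots,s+k\}$ and therefore $G\in\hf$. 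This yields $\binom{[s+k]}{k}\subset\hf$, giving $\omega\ge s+k$ and contradicting the case assumption. Hence $\hf\subset\mathcal{E}(n,k,s)$, which is (i), completing the trichotomy.
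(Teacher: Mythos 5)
Your proof is correct and follows essentially the same route as the paper: shiftedness pins $\{s+1,\dots,s+k\}$ (hence $\binom{[s+k]}{k}$) into $\hf$ when some edge avoids $[s]$, and in the top range any edge outside $[sk+k-1]$ leaves room for an $s$-matching inside the clique, forcing $\hf=\binom{[sk+k-1]}{k}$. The only difference is cosmetic: you organize by cases on $\omega$ and spell out the "clique may be taken as an initial segment" and $g_i\le s+i$ steps that the paper compresses into "by shiftedness."
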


\begin{proof}
If $\hf\subset\mathcal{E}(n,k,s)$, then (i) holds. Otherwise $F\cap [s] =\emptyset$ must hold for some $F\in \hf$. Then $(s+1,s+2,\ldots,s+k) \prec F$ implies $\{s+1,\ldots,s+k\}\in \hf$. Again by shiftedness $\binom{[s+k]}{k}\subset \hf$ showing that $s+k \leq \omega(\hf)$.

Now (ii) holds unless  $\omega(\hf)\geq sk+k-1$. Then by shiftedness $\binom{[sk+k-1]}{k}\subset \hf$. Should there exist some $F_0 \in \hf \setminus \binom{[sk+k-1]}{k}$, $|F_0\cap [sk+k-1]|\leq k-1$ follows. Then we can find pairwise disjoint $k$-sets $F_1,\ldots,F_s \subset [sk+k-1]\setminus F_0$ and get a contradiction with $\nu(F) =s$.
\end{proof}

Since repeated application of shifting  eventually produces a shifted family, in view of the above discussion, we have
\begin{align}\label{1-7}
m(n,q,k,s)=\max_{q\leq t<(s+1)k}m^*(n,t,k,s).
\end{align}
One aim of the present paper is to determine $m^*(n,t,k,s)$ for $n$ sufficiently large, e.g., $n\geq 8k^2s$. Unless otherwise stated, from now on $\hf$ always denotes a shifted family satisfying $\hf\subset {[n]\choose k}$, $\nu(\hf)=s$, $\omega(\hf)=q$ and $|\hf|=m^*(n,q,k,s)$. Since $\hf$ is shifted, ${[q]\choose k}\subset \hf$.

Let us recall the following common notations:
$$\hf(i)=\{F\setminus\{i\}\colon i\in F\in \hf\}, \qquad \hf(\bar{i})= \{F\in\hf: i\notin F\}.$$
Note that $|\hf|=|\hf(i)|+|\hf(\bar{i})|$. For $V\subset [n]$ and $q< n$, we also use
\[
\hf(V) =\{F\setminus V\colon V\subset F\in \hf\}, \qquad \hf(\overline{V}) =\{F\in F\colon F\cap V=\emptyset\},
\]
and
\[
\hf(V,[q])=\{F\setminus V\colon V\subset F\in \hf, F\setminus V \subset [q]\}.
\]

\begin{definition}
If $\nu(\hf(\bar{1}))=s-1$ then $\hf$ is called reducible.
\end{definition}

\begin{lem}\label{recur}
If $\hf$ is reducible,  then
\begin{align}\label{1-8}
m^*(n,q,k,s)= {n-1\choose k-1}+m^*(n-1,q-1,k, s-1).
\end{align}
\end{lem}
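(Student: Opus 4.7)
I would decompose $|\hf|=|\hf(1)|+|\hf(\bar 1)|$ and use the trivial bound $|\hf(1)|\le \binom{n-1}{k-1}$ coming from $\hf(1)\subseteq\binom{[2,n]}{k-1}$. The heart of the matter is then the two matching estimates $|\hf(\bar 1)|\le m^*(n-1,q-1,k,s-1)$ (upper bound) and the exhibition of a shifted family on $[n]$ of size $\binom{n-1}{k-1}+m^*(n-1,q-1,k,s-1)$ realising the correct matching and clique numbers (lower bound).

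\emph{Upper bound on $|\hf(\bar 1)|$.} Regard $\hf(\bar 1)$ as a family on $[2,n]$. Shiftedness is inherited from $\hf$ and $\nu(\hf(\bar 1))=s-1$ is exactly the reducibility hypothesis. The nontrivial step is to check $\omega(\hf(\bar 1))=q-1$. One direction is automatic: $\binom{[q]}{k}\subseteq\hf$ (shiftedness plus $\omega(\hf)=q$) gives $\binom{[2,q]}{k}\subseteq\hf(\bar 1)$, so $\omega(\hf(\bar 1))\ge q-1$. For the reverse, suppose for contradiction some $q$-set $Q=\{a_1<\cdots<a_q\}\subseteq[2,n]$ is a clique of $\hf$; then $a_i\ge i+1$, and for any $K=\{b_1<\cdots<b_k\}\subseteq[2,q+1]$ one has $b_i\le q-k+i+1\le a_{q-k+i}$, whence $K\prec\{a_{q-k+1},\ldots,a_q\}\in\hf$ and $K\in\hf$ by shiftedness. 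Thus $\binom{[2,q+1]}{k}\subseteq\hf$. A second shift finishes the job: for any $K\in\binom{[q+1]}{k}$ with $1\in K$, write $K=\{1,2,\ldots,j\}\cup\{e_{j+1},\ldots,e_k\}$ with $e_{j+1}>j+1$; then $K\prec\{2,3,\ldots,j+1\}\cup\{e_{j+1},\ldots,e_k\}\in\binom{[2,q+1]}{k}\subseteq\hf$, so $K\in\hf$. Together with $\binom{[q]}{k}\subseteq\hf$ this gives $\binom{[q+1]}{k}\subseteq\hf$, contradicting $\omega(\hf)=q$. Hence $\omega(\hf(\bar 1))=q-1$ and the definition of $m^*$ yields $|\hf(\bar 1)|\le m^*(n-1,q-1,k,s-1)$.

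\emph{Matching construction.} Let $\hg\subseteq\binom{[2,n]}{k}$ be a shifted family on $[2,n]$ realising $m^*(n-1,q-1,k,s-1)$, and set $\hf'=\{F\in\binom{[n]}{k}\colon 1\in F\}\cup\hg$. Shiftedness of $\hf'$ is routine: a $\prec$-predecessor of an edge through $1$ necessarily still contains $1$, while a $\prec$-predecessor of an edge of $\hg$ either lies in $\hg$ (by shiftedness of $\hg$ on $[2,n]$) or contains $1$. Adjoining an edge through $1$ disjoint from an $(s-1)$-matching of $\hg$ (possible since $n\ge(s+1)k$) produces an $s$-matching in $\hf'$, and since any matching uses at most one edge through $1$, $\nu(\hf')=s$. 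Similarly, adjoining $1$ to a $(q-1)$-clique of $\hg$ yields a $q$-clique of $\hf'$, while any $(q+1)$-clique of $\hf'$ would induce a clique of size $\ge q$ in $\hg$, contradicting $\omega(\hg)=q-1$; hence $\omega(\hf')=q$. Therefore $m^*(n,q,k,s)\ge|\hf'|=\binom{n-1}{k-1}+m^*(n-1,q-1,k,s-1)$, which combined with the upper bound gives equality. The chief obstacle is the clique identity $\omega(\hf(\bar 1))=q-1$; once this two-stage shifting argument is in place, everything else is bookkeeping.
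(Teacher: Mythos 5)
Your proof is correct, and it is organized differently from the paper's. The paper argues in one stroke via maximality: it first claims that the extremal $\hf$ must contain every $k$-set through $1$ (an exchange argument), deduces that every clique of $\hf$ extends by the element $1$, and concludes that $\hf(\bar 1)$ is itself an extremal shifted family with parameters $(n-1,q-1,k,s-1)$, so that $|\hf|=\binom{n-1}{k-1}+m^*(n-1,q-1,k,s-1)$ falls out at once. You never prove (nor need) that the star at $1$ is full: your upper bound uses only the trivial estimate $|\hf(1)|\le\binom{n-1}{k-1}$ together with the verification $\omega(\hf(\bar 1))=q-1$, and your lower bound is the explicit construction $\{F\colon 1\in F\}\cup\hg$ with $\hg$ extremal on $[2,n]$, with shiftedness, $\nu=s$ and $\omega=q$ checked directly. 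The technical heart is the same in both treatments: one must know that a $q$-clique of $\hf$ inside $[2,n]$ is impossible (equivalently, that adjoining the full star, or a clique extension by $1$, cannot raise the clique number past $q$); the paper leaves this implicit in its appeal to maximality, whereas your two-stage shifting argument (first pushing the clique down to $[2,q+1]$, then absorbing the element $1$) spells it out, and is exactly what is needed to make the exchange step rigorous. What each route buys: the paper's version is shorter and reuses maximality twice (full star, and extremality of $\hf(\bar 1)$); yours is longer but more self-contained, splits cleanly into an inequality in each direction, and only invokes maximality of $\hf$ through the definition of $m^*(n,q,k,s)$.
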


\begin{proof}
By the maximality of $\hf$, it follows that all $k$-sets containing $1$ are in $\hf$. Note that this implies that for every clique $S$ in $\hf$, $S\cup \{1\}$ is also a clique. Hence $\hf(\bar{1})$ has matching number $s-1$, clique number $q-1$ and the maximum number of edges. It follows that
\[
|\hf(\bar{1})|  = m^*(n-1,q-1,k,s-1)
\]
and the lemma follows.
\end{proof}

\begin{definition}
 If $sk+1\leq q\leq sk+k-1$, set $r=q-sk$ and define
\[
\ha(n,q,k,s) = \binom{[q]}{k}\cup \left\{A\in \binom{[n]}{k}\colon 1\in A,\ |A\cap [2,q]|\geq r\right\}.
\]
 If $s+k\leq q\leq sk$, let $p$ be the integer satisfying $(s-p)k+p+1\leq q\leq (s-p)k+p+k-1$ and  $r=q-p-(s-p)k$. Define a family on $[n]$ by
\begin{align*}
\ha(n,q,k,s) =\binom{[q]}{k}&\cup \left\{A\in \binom{[p+1,n]}{k}\colon p+1\in A,\ |A\cap [p+2,q]|\geq r\right\} \\ &\qquad\qquad \cup\left\{A\in \binom{[n]}{k}\colon A\cap[p]\neq \emptyset \right\}.
\end{align*}
If $q=s+k-1$, define
\[
\ha(n,q,k,s) =  \mathcal{E}(n,k,s).
\]
\end{definition}

%

It is easy to check that $\ha(n,q,k,s)$ is a shifted $k$-graph with matching number $s$ and clique number $q$. Therefore,
\[
m^*(n,q,k,s) \geq |\ha(n,q,k,s)|=\binom{n}{k}-\binom{n-p}{k}+\binom{q-p}{k}+\sum_{i=r+1}^{k-1} \binom{q-p-1}{i-1}\binom{n-q}{k-i}.
\]

In the present paper, we first prove the following result.

\begin{thm}\label{main-1}
For $s+k\leq q< sk+k-1$ and $n\geq 8k^2s$,
\[
m^*(n,q,k,s)  =  |\ha(n,q,k,s)|.
\]
\end{thm}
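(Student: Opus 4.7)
The plan is to prove Theorem~\ref{main-1} by induction on $s$. The base case $s=1$ forces $k+1\le q\le 2k-2$; a direct check shows that $\ha(n,q,k,1)$ coincides with the Li--Chen--Huang--Lih family $\mathcal{L}(n,k,q)$ defined before Theorem~\ref{thm-f19}, so Theorem~\ref{thm-f19} gives the bound immediately. For the inductive step, fix an extremal shifted $\hf$ with $\nu(\hf)=s$, $\omega(\hf)=q$, and $|\hf|=m^{*}(n,q,k,s)$, and split on whether $\hf$ is reducible.

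If $\hf$ is reducible, then Lemma~\ref{recur} gives $m^{*}(n,q,k,s)=\binom{n-1}{k-1}+m^{*}(n-1,q-1,k,s-1)$. When $p\ge 1$, the element $1$ lies in $[p]$, so $\ha(n,q,k,s)$ contains every $k$-set through $1$; a direct calculation (relabelling $[2,n]\to[n-1]$ and setting $p'=p-1$, $r'=r$) shows that the edges of $\ha(n,q,k,s)$ not containing $1$ are exactly $\ha(n-1,q-1,k,s-1)$ after this relabelling. Hence $|\ha(n,q,k,s)|=\binom{n-1}{k-1}+|\ha(n-1,q-1,k,s-1)|$, and the inductive hypothesis (applicable since $n-1\ge 8k^{2}(s-1)$) closes this subcase. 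When $p=0$, we have $q\ge sk+1$, so $\binom{[2,q]}{k}\subseteq\hf(\bar{1})$ has matching number $\lfloor(q-1)/k\rfloor=s$; this forces $\hf$ to be non-reducible, so the reducible subcase is vacuous.

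In the non-reducible case $\nu(\hf(\bar{1}))=s$, and the heart of the argument is the structural claim that every $F\in\hf\setminus\binom{[q]}{k}$ satisfies either $F\cap[p]\ne\emptyset$, or $p+1\in F$ together with $|F\cap[p+2,q]|\ge r$. Granting the claim, summing over types directly yields $|\hf|\le|\ha(n,q,k,s)|$. To prove the claim, suppose for contradiction that some $F\in\hf\setminus\binom{[q]}{k}$ violates both alternatives. Using shiftedness I would reduce to the canonical situation $\min F=p+1$ and $|F\cap[p+2,q]|\le r-1$, so that $|F\cap[q]|\le r$ and $|F\cap[q+1,n]|\ge k-r$. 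I then aim to exhibit $s+1$ pairwise disjoint edges of $\hf$: $F$ itself; exactly $s-p$ kernel edges drawn from $\binom{[p+1,q]\setminus F}{k}\subseteq\binom{[q]}{k}$, which fit since $|[p+1,q]\setminus F|\ge q-p-r=(s-p)k$; and for each $i\in[p]$ an edge $E_{i}\in\hf$ containing $i$, chosen so that all $E_{i}$'s are pairwise disjoint and disjoint from $F$ and the kernel edges. Membership $E_{i}\in\hf$ would be deduced by a shifting cascade combining $F$ with the canonical matching $M_{j}:=\{(j-1)k+2,\ldots,jk+1\}\in\hf$ for $j=1,\ldots,s$, which is supplied by non-reducibility together with the shiftedness of $\hf(\bar{1})$.

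The principal obstacle is this last step: guaranteeing that all $p$ edges $E_{i}$ simultaneously lie in $\hf$ and are pairwise disjoint. A single shift from $F$ produces at most one such $E_{i}$, so one must iterate the shifting and use the $M_{j}$'s and suitable intermediate edges as further witnesses. The hypothesis $n\ge 8k^{2}s$ enters precisely here, guaranteeing enough room in $[q+1,n]$ to choose the $p(k-1)$ distinct elements of $\bigcup_{i}(E_{i}\setminus\{i\})$ and enough slack in the shifting cascade to reach each $E_{i}$. Once the claim is established, a straightforward type-by-type comparison with the closed form
\[
|\ha(n,q,k,s)|=\binom{n}{k}-\binom{n-p}{k}+\binom{q-p}{k}+\sum_{i=r+1}^{k-1}\binom{q-p-1}{i-1}\binom{n-q}{k-i}
\]
displayed just before the theorem completes the induction.
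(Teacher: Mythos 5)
Your outer framework (induction via reducibility plus Lemma~\ref{recur}) parallels the paper's reduction, and your bookkeeping for the reducible case is fine, including the identity $|\ha(n,q,k,s)|=\binom{n-1}{k-1}+|\ha(n-1,q-1,k,s-1)|$ for $p\geq 1$. But the entire content of the theorem sits in your non-reducible case, and there the argument has a genuine gap, which you yourself flag as ``the principal obstacle.'' Your structural claim asserts that the extremal family satisfies $\hf\subset\ha(n,q,k,s)$ (every edge outside $\binom{[q]}{k}$ meets $[p]$, or contains $p+1$ with at least $r$ further elements of $[q]$). This is a uniqueness-type containment that the paper neither proves nor needs, and your sketch does not establish it. Two concrete failure points: (i) if the offending $F$ avoids $[p+1]$, shifting it down merely produces a new member $F'\prec F$ that may lie perfectly legally in $\ha$; you get no contradiction about $F$ itself, since shiftedness creates members but never constrains an existing edge. (ii) In your matching construction the edges $E_i$ through $i\in[p]$ cannot be taken inside $[q]$: you need $r+(s-p)k+pk=sk+r+p(k-1)>q$ vertices, so each $E_i$ must use $k-1$ vertices of $[q+1,n]$, and nothing in the hypotheses places such sets in $\hf$ --- they do not precede $F$, the kernel edges, or the clique edges in the shifting order, and maximality cannot be invoked without circularity (adding them could raise $\nu$, which is exactly what is at issue). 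Note also that for $sk+1\leq q<sk+k-1$ we have $p=0$ and, as you observe, the extremal family is automatically non-reducible, so in that whole range your proposal rests solely on the unproven claim; for $p=0$ the claim amounts to forbidding any edge avoiding $1$ with large trace on $[q]$, which is not ruled out by any short matching argument.

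The paper closes this gap with two substantial pieces of machinery that have no counterpart in your proposal. For $sk+1\leq q<sk+k-1$ it does not attempt a structural containment at all: it splits $\hf$ by $|F\cap[q]|=l$, observes cross-intersecting and bounded-matching relations among the pieces (Claims in Lemma~\ref{lem3.1}), and bounds the pairs via a new cross-intersecting theorem for direct products (Theorem~\ref{cross-direct}, proved by Katona's cyclic permutation method) together with Theorem~\ref{directEKR}. For $s+k\leq q\leq sk$ it proves Lemma~\ref{reducible}: any non-reducible family is strictly smaller than $p\binom{n-p}{k-1}+\binom{q-p}{k}\leq m^*(n,q,k,s)$, via degree counting on the pieces $\hf_i$ using the Huang--Loh--Sudakov rainbow matching lemma (Lemma~\ref{rainbow}) and Frankl's bound (Lemma~\ref{gbEMC}); this is where $n\geq 8k^2s$ does real work, not in providing ``room'' for a shifting cascade. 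Only then does it apply Lemma~\ref{recur} $p$ times and finish with Lemma~\ref{lem3.1} and Proposition~\ref{prop-2}. To salvage your approach you would need either a proof of your containment claim (which looks at least as hard as the theorem itself) or a quantitative argument showing non-\,$\ha$-like configurations lose enough edges --- which is essentially what the paper's Sections 2--3 supply.
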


To answer Problem \ref{pr}, let us make the following conjecture.
\begin{conj}\label{conj-2}
Let $k, s$ be positive integers, $n\geq(s+1)k$, $s+k-1\leq  q\leq sk+k-1$.
\begin{align}\label{eqconj-2}
m(n,q,k,s)= \max\left\{|\ha(n,q,k,s)|, \binom{sk+k-1}{k}\right\}.
\end{align}
\end{conj}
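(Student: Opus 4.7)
The plan is to derive Conjecture \ref{conj-2} by combining identity \eqref{1-7} with Theorem \ref{main-1} and a monotonicity analysis of $|\ha(n,t,k,s)|$ in $t$. The lower bound $m(n,q,k,s)\geq\max\{|\ha(n,q,k,s)|,\binom{sk+k-1}{k}\}$ is immediate: $\ha(n,q,k,s)$ has $\nu=s$ and $\omega=q$, while $\binom{[sk+k-1]}{k}$ has $\nu=s$ and $\omega=sk+k-1\geq q$, so both families are admissible candidates.

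For the upper bound, I would invoke \eqref{1-7} to write
$$m(n,q,k,s)=\max_{q\leq t\leq sk+k-1} m^\ast(n,t,k,s).$$
Theorem \ref{main-1} already yields $m^\ast(n,t,k,s)=|\ha(n,t,k,s)|$ for $\max(q,s+k)\leq t\leq sk+k-2$, at least when $n\geq 8k^2 s$. For $t=sk+k-1$, Proposition \ref{prop-2}(iii) forces $m^\ast(n,sk+k-1,k,s)=\binom{sk+k-1}{k}$, and for $t=s+k-1$ (only relevant when $q=s+k-1$), Proposition \ref{prop-2}(i) gives $m^\ast(n,s+k-1,k,s)=|\mathcal{E}(n,k,s)|=|\ha(n,s+k-1,k,s)|$. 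It then suffices to prove the purely numerical comparison
$$\max_{q\leq t\leq sk+k-2}|\ha(n,t,k,s)|\leq\max\left\{|\ha(n,q,k,s)|,\binom{sk+k-1}{k}\right\}.$$

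I would attack this by showing that $N(t):=|\ha(n,t,k,s)|$ is non-increasing in $t$ whenever $n$ is large enough. Using the explicit formula for $N(t)$, the difference $N(t)-N(t+1)$ splits into two cases. Within a block where the parameter $p$ is constant (only $r=t-p-(s-p)k$ shifts by one), a Pascal-style telescoping identity shows the difference is positive once $n$ dominates a polynomial in $k$. Across a block boundary where $p$ drops by one, the loss from removing an element of the universal star $[p]$ must be balanced against the gain from enlarging the clique $\binom{[t]}{k}$ and the corresponding star contribution; the hypothesis $n\geq 8k^2s$ is used here. With monotonicity in hand, the maximum over $t$ is attained at $t=q$, and the comparison with $\binom{sk+k-1}{k}$ follows by taking the larger of the two.

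The main obstacle is the gap between the hypothesis $n\geq(s+1)k$ in the conjecture and $n\geq 8k^2 s$ required by Theorem \ref{main-1}. In the intermediate range $(s+1)k\leq n<8k^2s$, the extremal shifted family with prescribed $\nu=s$ and $\omega=t$ need not equal $\ha(n,t,k,s)$; indeed, for small $n$ the clique $\binom{[sk+k-1]}{k}$ can overtake every $|\ha(n,t,k,s)|$, which is precisely why the right-hand side of \eqref{eqconj-2} is written as a maximum. Closing this gap will likely demand a genuinely new stability argument, perhaps combining the reducibility Lemma \ref{recur} with a two-family comparison (star-like versus clique-like) and a delta-system or fractional relaxation, in the spirit of the partial results the paper already establishes for $q\geq(s+1)k-l$ at the bottom end of $n$. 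A secondary technical nuisance is the block-boundary step of the monotonicity, which branches into several sub-cases depending on the residue of $q$ modulo $k$.
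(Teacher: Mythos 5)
The statement you are trying to prove is stated in the paper as a \emph{conjecture}, and the paper itself does not prove it in the full range $n\geq(s+1)k$; so the honest benchmark is whether your argument goes beyond the paper's partial results. It does not. For $n\geq 8k^2s$ your sketch is essentially the paper's own proof of Theorem \ref{main-2}: the reduction \eqref{1-7} to shifted families, Theorem \ref{main-1} for $s+k\leq t\leq sk+k-2$, Proposition \ref{prop-2} for the endpoints $t=s+k-1$ and $t=sk+k-1$, and the monotonicity $|\ha(n,t,k,s)|\geq|\ha(n,t+1,k,s)|$, which the paper proves as Proposition \ref{prop-3} (and only needs $n\geq 2q$, not a polynomial in $k$ times $s$; your "block boundary" case $r=k-1$ versus $r<k-1$ is exactly the case split there). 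So that part is correct but not new.

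The genuine gap is the one you flag yourself: for $(s+1)k\leq n<8k^2s$ you have no argument at all, only the hope of "a genuinely new stability argument," and this range is precisely what keeps \eqref{eqconj-2} a conjecture. Note also that in this range the difficulty is not merely the numerical comparison of $|\ha(n,t,k,s)|$ with $\binom{sk+k-1}{k}$ (that comparison is elementary); it is that $m^*(n,t,k,s)$ is unknown there, because the key structural step behind Theorem \ref{main-1} --- Lemma \ref{reducible}, showing the extremal shifted family is reducible so that Lemma \ref{recur} can be iterated --- uses $n\geq 8k^2s$ in an essential way, and the cross-intersecting machinery of Theorem \ref{cross-direct} likewise needs $n_2\geq 4kn_1$. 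The paper does close parts of this range by different means that your sketch does not supply: the case $q=sk+k-2$ for all $n\geq(s+1)k$ via the weighted cross-intersecting Theorem \ref{prop6-1} (Theorem \ref{specialcase-2}), the case $k=2$ via a shifted-graph containment result (Theorem \ref{specialcase-1}), and the regime $q=(s+1)k-l$, $n\leq(s+1)k+s/(3k)-l$ via a Kruskal--Katona/shadow argument (Theorem \ref{main-4}). As a proof of the conjecture as stated, your proposal is therefore incomplete; as a proof of the $n\geq 8k^2s$ case it is fine but coincides with the paper's route.
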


By \eqref{1-7} and Theorem \ref{main-1}, we prove \eqref{eqconj-2} for $n\geq 8k^2s$.

\begin{thm}\label{main-2}
If $s+k\leq q\leq sk+k-1$ and $n\geq 8k^2s$,
then
\[
m(n,q,k,s) =|\ha(n,q,k,s)|.
\]
If $k\leq q\leq s+k-1$ and $n\geq 8k^2s$, then
\[
m(n,q,k,s) =\binom{n}{k}-\binom{n-s}{k}.
\]
\end{thm}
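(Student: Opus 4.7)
The plan is to deduce Theorem \ref{main-2} from equation \eqref{1-7}, Theorem \ref{main-1}, and Proposition \ref{prop-2}, by evaluating $m^*(n,t,k,s)$ in each subrange of $t$ and locating the maximum. By \eqref{1-7}, $m(n,q,k,s)=\max_{q\leq t\leq (s+1)k-1} m^*(n,t,k,s)$. Three regimes appear: for $s+k\leq t\leq (s+1)k-2$, Theorem \ref{main-1} gives $m^*(n,t,k,s)=|\ha(n,t,k,s)|$; for $t=(s+1)k-1$, Proposition \ref{prop-2}(iii) forces $\hf=\binom{[(s+1)k-1]}{k}$, yielding $m^*(n,t,k,s)=\binom{(s+1)k-1}{k}$, which equals $|\ha(n,(s+1)k-1,k,s)|$ by unwinding the definition of $\ha$; and for $k\leq t\leq s+k-1$, Proposition \ref{prop-2}(i) gives $\hf\subset\mathcal{E}(n,k,s)$, so $m^*(n,t,k,s)\leq\binom{n}{k}-\binom{n-s}{k}$, with equality at $t=s+k-1$ via $\mathcal{E}(n,k,s)$.

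For the first statement, with $s+k\leq q\leq (s+1)k-1$, the maximum in \eqref{1-7} is entirely over the first two regimes, and the theorem reduces to proving the monotonicity
\[
|\ha(n,q,k,s)|\geq |\ha(n,q+1,k,s)| \qquad\text{for all } s+k\leq q\leq (s+1)k-2.
\]
I would verify this using the explicit formula
\[
|\ha(n,q,k,s)|=\binom{n}{k}-\binom{n-p}{k}+\binom{q-p}{k}+\sum_{i=r+1}^{k-1}\binom{q-p-1}{i-1}\binom{n-q}{k-i},
\]
splitting into two cases: either $q\to q+1$ preserves $p$ and sends $r\to r+1$, in which case only the clique term $\binom{q-p}{k}$ and the sum change; or $p$ drops by one (when $q=(s-p+1)k+p-1$) and $r$ resets to $1$, in which case the main term $\binom{n}{k}-\binom{n-p}{k}$ loses $\binom{n-p}{k-1}=\Theta(n^{k-1})$. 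For $n\geq 8k^2s$, this $n$-dependent loss dominates every gain in the remaining terms, which are bounded crudely by $\binom{(s+1)k}{k-1}\binom{n-q}{k-2}=O(n^{k-2})$.

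For the second statement, with $k\leq q\leq s+k-1$, the maximum in \eqref{1-7} now also ranges over $t\in[s+k,(s+1)k-1]$. By the monotonicity from the first part, the largest such value is $|\ha(n,s+k,k,s)|$, for which $p=s-1$ and the main term is $\binom{n}{k}-\binom{n-s+1}{k}$. Comparing with $|\mathcal{E}(n,k,s)|=\binom{n}{k}-\binom{n-s}{k}$, the gap in main terms is $\binom{n-s}{k-1}=\Theta(n^{k-1})$, which for $n\geq 8k^2s$ exceeds the $O(n^{k-2})$ lower-order contributions of $|\ha(n,s+k,k,s)|$. Hence the maximum is attained at $t=s+k-1$ by $\mathcal{E}(n,k,s)$, yielding $m(n,q,k,s)=\binom{n}{k}-\binom{n-s}{k}$.

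The main obstacle is the monotonicity step where $p$ drops: the sum $\sum_{i=r+1}^{k-1}\binom{q-p-1}{i-1}\binom{n-q}{k-i}$ does not transform cleanly across the transition, and one must compare it with the change in $\binom{q-p}{k}$ via crude term-by-term bounds rather than exact cancellation. Fortunately, the $\Theta(n^{k-1})$ loss in $\binom{n-p}{k}$ absorbs all lower-order fluctuations once $n\geq 8k^2s$, so the inequality follows without delicate analysis.
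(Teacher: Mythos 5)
Your overall route is the one the paper takes: combine \eqref{1-7}, Theorem \ref{main-1} and Proposition \ref{prop-2} with the monotonicity $|\ha(n,q,k,s)|\geq|\ha(n,q+1,k,s)|$, which is exactly the paper's Proposition \ref{prop-3} (proved there by exact Pascal/Vandermonde manipulations, valid already for $n\geq 2q$, and stated from $q=s+k-1$ on, so that your separate comparison of $|\mathcal{E}(n,k,s)|$ with $|\ha(n,s+k,k,s)|$ is subsumed, since $\ha(n,s+k-1,k,s)=\mathcal{E}(n,k,s)$ by definition). The endpoint $t=sk+k-1$ via Proposition \ref{prop-2}(iii) and the range $t\leq s+k-1$ via \ref{prop-2}(i) are handled as in the paper. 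So the deduction is fine; the issue is your proof of the monotonicity, which is the actual content.

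Two concrete gaps there. First, the case where $p$ is preserved and $r\mapsto r+1$ (i.e.\ $r\leq k-2$) is not argued at all: in that transition the term $\binom{n}{k}-\binom{n-p}{k}$ does not change, so the ``$\Theta(n^{k-1})$ loss in $\binom{n-p}{k}$'' you invoke simply is not present. What must be shown is that the disappearing summand $\binom{q-p-1}{r}\binom{n-q}{k-r-1}$ beats the clique increment $\binom{q-p}{k-1}$ plus the shifts of the remaining summands; the paper collapses the whole difference to $\binom{q-p-1}{r}\binom{n-q-1}{k-r-1}-\binom{q-p-1}{k-1}\geq 0$ for $n\geq 2q$. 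Second, in the $p$-drop case your crude bound does not close the argument: $\binom{n-p}{k-1}$ is \emph{not} in general at least $\binom{(s+1)k}{k-1}\binom{n-q}{k-2}$ under $n\geq 8k^2s$ (e.g.\ $k=10$, $s=2$, $n=1600$, $q=20$: the left side is about $200\binom{1580}{8}$ while $\binom{30}{9}\approx 1.4\cdot 10^{7}$), because the ``constant'' $\binom{(s+1)k}{k-1}$ is super-polynomial in $k$ whereas $n\geq 8k^2s$ only buys a factor of order $ks$ per power of $n$. The inequality you want is true, but you need a sharper estimate, e.g.\ that the terms of $\sum_{i=2}^{k-1}\binom{q-p+1}{i-1}\binom{n-q-1}{k-i}$ decay geometrically, so the sum is at most $O(sk)\binom{n-q}{k-2}$, together with a separate (easy) bound on the clique increment; or simply the paper's exact identity, which reduces this case to $\binom{n-q-1}{k-1}\geq\binom{q-p}{k-1}$. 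With these repairs your plan goes through, but as written both halves of the monotonicity step are incomplete.
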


In \cite{F17}, the first author showed that for a small $\varepsilon$ ($\varepsilon =\varepsilon(k)$) and
$(s+1)k\leq n<(s+1)(k+\varepsilon)$ the Erd\H{o}s Matching Conjecture is true and
\begin{align*}
|\hf| \leq \binom{sk+k-1}{k}.
\end{align*}
In the present paper, we prove a similar result about  Conjecture \ref{conj-2}.

\begin{thm}\label{main-4}
Let $l<\frac{s}{3k}$ and $(s+1)k\leq n\leq (s+1)k+\frac{s}{3k}-l$. Then
\[
m(n,(s+1)k-l,k,s) = \binom{sk+k-1}{k}.
\]
\end{thm}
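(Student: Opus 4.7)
The plan is to extend the strategy of Frankl~\cite{F17} (who proved the analogous bound for the Erd\H{o}s matching conjecture in the regime $n<(s+1)(k+\varepsilon)$) by exploiting the clique hypothesis $\omega(\hf)\geq(s+1)k-l$. After shifting $\hf$ (which preserves $|\hf|$, preserves $\nu(\hf)\leq s$, and can only increase $\omega(\hf)$), the clique hypothesis gives $\binom{X}{k}\subseteq\hf$ with $X=[(s+1)k-l]$. Write $Y=[(s+1)k-l+1,n]$, so $m:=|Y|=n-(s+1)k+l\leq s/(3k)$, which is the small parameter driving the argument.

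The key structural inequality is a matching-budget: for any pairwise disjoint $F_1,\dots,F_t\in\hf$, $\sum_{i=1}^t|F_i\cap Y|\leq l-1$. Indeed, if $\sum_i|F_i\cap Y|\geq l$ then $|X\setminus\bigcup_i F_i|\geq(s+1-t)k$, so the clique $\binom{X}{k}\subseteq\hf$ supplies the remaining $s+1-t$ disjoint $k$-sets and produces a matching of size $s+1$ in $\hf$, contradicting $\nu(\hf)\leq s$. For each $j\in\{0,1,\dots,l-1\}$ and $A\in\binom{X}{k-j}$, set $\mathcal{T}_A:=\{T\in\binom{Y}{j}:A\cup T\in\hf\}$; by shiftedness each $\mathcal{T}_A$ is a shifted subfamily of $\binom{Y}{j}$, and $|\hf|=\sum_{j=0}^{l-1}\sum_{A\in\binom{X}{k-j}}|\mathcal{T}_A|$.

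It remains to show $\sum_{A\in\binom{X}{k-j}}|\mathcal{T}_A|\leq\binom{(s+1)k-l}{k-j}\binom{l-1}{j}$ at each level $j\geq 1$; Vandermonde's identity then gives $|\hf|\leq\sum_{j=0}^{l-1}\binom{(s+1)k-l}{k-j}\binom{l-1}{j}=\binom{(s+1)k-1}{k}=\binom{sk+k-1}{k}$, matching the lower bound from $\binom{[sk+k-1]}{k}$. The matching-budget inequality forces, for each pair of disjoint $A_1,A_2\in\binom{X}{k-j}$ with $2j\geq l$, that $\mathcal{T}_{A_1}$ and $\mathcal{T}_{A_2}$ are cross-intersecting in $\binom{Y}{j}$; combined with $m\leq s/(3k)$, Hilton-type bounds for pairwise cross-intersecting families yield the required bound. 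For lower levels $j$ with $2j<l$, analogous bounds come from $t$-wise cross-intersecting conditions with $t\geq\lceil l/j\rceil$. The main obstacle is the delicate bookkeeping to ensure the level-$j$ bound is tight for every $j\in\{1,\dots,l-1\}$: it is exactly the hypothesis $n\leq(s+1)k+s/(3k)-l$, via $m\leq s/(3k)$, that prevents the cross-intersecting sums from exceeding their Vandermonde targets.
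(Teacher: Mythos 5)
Your reduction to the clique $X=[(s+1)k-l]$, the matching--budget inequality $\sum_i|F_i\cap Y|\leq l-1$, and the Vandermonde identity $\sum_{j}\binom{(s+1)k-l}{k-j}\binom{l-1}{j}=\binom{sk+k-1}{k}$ are all correct, but the step you defer -- ``it remains to show $\sum_{A\in\binom{X}{k-j}}|\mathcal{T}_A|\leq\binom{(s+1)k-l}{k-j}\binom{l-1}{j}$ at each level $j$'' -- is false, so no choice of Hilton-type or $t$-wise cross-intersecting lemma can close it. Concretely, take $k=3$, $s=1000$, $l=k=3$ (so $l<s/(3k)$), $q=(s+1)k-l=3000$, and $n=3111$, so $m=|Y|=111\leq s/(3k)$. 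The family $\hf=\binom{[3000]}{3}\cup\bigl\{F\in\binom{[n]}{3}\colon 1\in F\bigr\}$ is shifted, has $\omega(\hf)=3000=q$ and $\nu(\hf)=s$ (any matching uses at most one edge through $1$, and $[3000]$ minus that edge's trace holds at most $s-1$ further clique edges), yet its top level $j=l-1=2$ contains all $\binom{m}{2}=\binom{111}{2}=6105$ sets $\{1\}\cup T$, $T\in\binom{Y}{2}$, while your claimed level bound is $\binom{q}{1}\binom{l-1}{l-1}=3000$. The point is that your cross-intersecting constraints (which are correctly derived from the budget) are vacuously satisfied by a star in $X$ whose $Y$-shadow is all of $\binom{Y}{j}$, and your target bound $\binom{X \text{ part}}{\cdot}\binom{l-1}{j}$ does not grow with $m$, so it is beaten at high levels once $\binom{m}{j}$ is large; smallness of $m$ relative to $s$ does not save a bound that ignores $m$ entirely. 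Such families are of course far below $\binom{sk+k-1}{k}$ in total size, which is exactly why the inequality must be proved for the sum over levels and not level by level.

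This is also where your route diverges essentially from the paper's proof. There one fixes $i$ and splits $Y$ into a ``near'' part $[q+1,q+l-i-1]$ and a ``far'' part $[q+l-i,n]$, and argues by dichotomy on the degrees $|\hf(q+l-i,[q])|$: if all these degrees are large (for every $i\leq l-1$), a matching argument forces $\hf(q+l,[q])=\emptyset$, whence by shiftedness $\hf\subset\binom{[q+l-1]}{k}$ and one is done; otherwise some degree is small, and a Kruskal--Katona shadow argument shows that \emph{all} edges meeting the far part (over all levels $j$ together) contribute at most $i\binom{q}{k-1}$, while edges confined to the near part are bounded trivially by $\binom{l-1}{a}\binom{q}{k-a}$ per level; only the combined total is then matched against $\binom{sk+k-1}{k}$. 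If you want to salvage your plan, you would need to replace the per-level claim by such a redistribution between levels (e.g., charging the excess created by stars like the one above against the deficit they force at lower levels), which is in effect what the paper's two-case argument accomplishes.
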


We also prove a general result about cross-intersecting families, which may be of independent interests.

\begin{thm}\label{prop6-1}
Let $n,k,l,s$ be positive integers, $t\geq 0$. Let $\mathcal{A}\subset{[n]\choose k}$ and $\mathcal{B}\subset {[n]\choose l}$. Suppose that $\nu(\mathcal{A})\leq s$ and $\mathcal{B}$ is $t$-intersecting (for $t=0$ the condition is void). Suppose further that $n\geq\max\{k+l,(2s+1)k,(l-t+1)(t+1)\}$, $s\geq t$ and $\mathcal{A}$, $\mathcal{B}$ are cross-intersecting, $\beta>0$ is a constant. Then
\begin{align}\label{6-1}
|\mathcal{A}|+\beta|\mathcal{B}|\leq \max_{t\leq i\leq s}\left\{{n\choose k}-{n-i\choose k}+\beta{n-i\choose l-i}\right\}.
\end{align}
\end{thm}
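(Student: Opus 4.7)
My plan is to apply simultaneous shifting to both families and then use a critical-parameter argument separating the structures of $\mathcal{A}$ and $\mathcal{B}$. Shifting separately preserves $\nu(\mathcal{A})\leq s$ and the $t$-intersecting property of $\mathcal{B}$, while applying the same shift $S_{ij}$ simultaneously to both families preserves cross-intersection (a classical observation of Frankl). Hence one may assume both $\mathcal{A}$ and $\mathcal{B}$ are shifted. The two boundary cases are easy: if $\mathcal{B}=\emptyset$, Frankl's shifted matching bound (using $n\geq(2s+1)k$) gives $|\mathcal{A}|\leq\binom{n}{k}-\binom{n-s}{k}$, the $i=s$ term; if $\mathcal{A}=\emptyset$, Frankl's $t$-intersecting bound (using $n\geq(l-t+1)(t+1)$) gives $|\mathcal{B}|\leq\binom{n-t}{l-t}$, the $i=t$ term. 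Henceforth both families are non-empty, so $[k]\in\mathcal{A}$ and $[l]\in\mathcal{B}$.

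For the main case, Frankl's shifted matching result forces $\mathcal{A}\subset\{A:A\cap[s]\neq\emptyset\}$, so I would define
\[
i^{*}:=\min\bigl\{i\geq 1:\mathcal{A}\subset\{A:A\cap[i]\neq\emptyset\}\bigr\}\in[1,s].
\]
This gives $|\mathcal{A}|\leq\binom{n}{k}-\binom{n-i^{*}}{k}$ and guarantees some $A_{0}\in\mathcal{A}$ with $\min A_{0}=i^{*}$. If $i^{*}\leq t$, pairing the above with Frankl's $t$-intersecting bound on $|\mathcal{B}|$ already dominates the $i=t$ term. If $i^{*}>t$, the goal becomes the inclusion $\mathcal{B}\subset\{B:[i^{*}]\subset B\}$, which yields $|\mathcal{B}|\leq\binom{n-i^{*}}{l-i^{*}}$ and hence the $i=i^{*}$ term.

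To prove the inclusion in the case $i^{*}>t$, I would induct on $j\in\{1,\ldots,i^{*}\}$: assuming $[j-1]\subset B$ for every $B\in\mathcal{B}$, if some $B_{0}\in\mathcal{B}$ missed $j$ the inductive hypothesis would leave $n-l-1$ elements of $[j+1,n]$ outside $B_{0}$; by $n\geq k+l$ there is then enough room to pick $(k-1)$-subsets $T\subset[j+1,n]\setminus B_{0}$, and the shiftedness of $\mathcal{A}$ applied to $A_{0}$ (together with $n\geq(2s+1)k$ providing room in its tail) should place $\{j\}\cup T$ inside $\mathcal{A}$, disjoint from $B_{0}$, contradicting cross-intersection.

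The principal obstacle is precisely this last production step: the particular $A_{0}$ supplied by the definition of $i^{*}$ need not have a large tail, so shiftedness alone may fail to place an arbitrary $\{j\}\cup T$ into $\mathcal{A}$. Overcoming this should use the structural theory of shifted families with $\nu\leq s$ either to enlarge the tail of $A_{0}$ using the hypothesis $n\geq(2s+1)k$, or else to bound $|\mathcal{A}|$ strictly below $\binom{n}{k}-\binom{n-i^{*}}{k}$ sharply enough that a fallback via Frankl's $t$-intersecting bound on $|\mathcal{B}|$ still lands below the maximum on the right-hand side of \eqref{6-1}.
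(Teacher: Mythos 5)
Your proposal has a genuine gap, and in fact two. First, the structural claim that launches the main case is false: shiftedness, $\nu(\mathcal{A})\leq s$ and $n\geq(2s+1)k$ do \emph{not} force $\mathcal{A}\subset\{A\colon A\cap[s]\neq\emptyset\}$. Frankl's result for this range is a bound on $|\mathcal{A}|$, not a containment statement; the shifted family $\mathcal{A}=\binom{[(s+1)k-1]}{k}$ has $\nu(\mathcal{A})=s$ for every $n$, yet the smallest $i$ with $\mathcal{A}\subset\{A\colon A\cap[i]\neq\emptyset\}$ is $i=sk$, so your $i^{*}$ need not lie in $[1,s]$ and the whole case split collapses. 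Second, the step you yourself flag as the ``principal obstacle'' is a real gap, not a technicality: to force $[i^{*}]\subset B$ for all $B\in\mathcal{B}$ you would need $\mathcal{A}$ to contain \emph{every} set of the form $\{j\}\cup T$ with $T\subset[j+1,n]\setminus B_{0}$, and a single witness $A_{0}$ with $\min A_{0}=i^{*}$ together with shiftedness cannot produce these (shifting only pushes elements down, so it gives sets $\prec A_{0}$, not arbitrary sets through $j$ with large tails). Neither the matching hypothesis nor $n\geq(2s+1)k$ repairs this, and your suggested fallback (bounding $|\mathcal{A}|$ strictly below $\binom{n}{k}-\binom{n-i^{*}}{k}$) is left entirely unproved.

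The paper avoids both difficulties by not shifting at all. It defines $j$ purely from the size of $\mathcal{A}$ via the sandwich $\binom{n}{k}-\binom{n-j}{k}<|\mathcal{A}|\leq\binom{n}{k}-\binom{n-j-1}{k}$ (with $t\leq j\leq s-1$ after disposing of the easy cases), then replaces $\mathcal{A}$ and $\mathcal{B}$ by the lexicographic initial segments $\widetilde{\mathcal{A}}$, $\widetilde{\mathcal{B}}$ of the same sizes; by Hilton's lemma (a Kruskal--Katona consequence, needing only $n\geq k+l$) these remain cross-intersecting. The lex family $\widetilde{\mathcal{A}}$ then automatically contains all $k$-sets meeting $[j]$, so $[j]\subset B$ for every $B\in\widetilde{\mathcal{B}}$ comes for free --- exactly the inclusion you could not produce --- and $\nu(\widetilde{\mathcal{A}})\leq j+1\leq s$ is likewise automatic. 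Finally, the leftover families $\mathcal{A}_{0}$ (the sets of $\widetilde{\mathcal{A}}$ through $j+1$ avoiding $[j]$, with $j+1$ deleted) and $\mathcal{B}_{0}$ (the sets of $\widetilde{\mathcal{B}}$ avoiding $j+1$, with $[j]$ deleted) form a cross-intersecting pair handled by an independent-set argument in the biregular bipartite disjointness graph, giving $|\mathcal{A}_{0}|+\beta|\mathcal{B}_{0}|\leq\max\{\binom{n-j-1}{k-1},\beta\binom{n-j-1}{l-j}\}$, which yields the $i=j+1$ or $i=j$ term of \eqref{6-1}. If you want to salvage your outline, the lexicographic compression step is the missing ingredient that replaces both your definition of $i^{*}$ and the unprovable production step.
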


Note that the families giving equality in \eqref{6-1} are:
\[\mathcal{A}_i=\left\{A\in{[n]\choose k}\colon A\cap[i]\neq\emptyset\right\}, \qquad \mathcal{B}_i=\left\{B\in{[n]\choose l}\colon [i]\subset B\right\}.\]

Using Theorem \ref{prop6-1}, we give a proof of \eqref{eqconj-2} for $q=(s+1)k-2$.

\begin{thm}\label{specialcase-2}
For $n\geq(s+1)k$,
\[
m(n,sk+k-2,k,s)= \max\left\{{sk+k-1\choose k}, {sk+k-2\choose k}+{sk+k-3\choose k-2}(n-q)\right\}.
\]
\end{thm}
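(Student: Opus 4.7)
My plan is to reduce to the shifted case, analyse the structure of the non-clique part of a shifted extremal family, and bound its size via a cross-intersecting argument based on Theorem \ref{prop6-1}. Write $q = sk + k - 2$. By \eqref{1-7} and Proposition \ref{prop-2}, $m(n,q,k,s) = \max\{m^*(n,q,k,s),\, \binom{sk+k-1}{k}\}$, the second term arising from the unique shifted family $\binom{[sk+k-1]}{k}$ with $\omega = sk+k-1$ and $\nu \leq s$; this is the first argument of the maximum in the theorem's statement. So it suffices to show $m^*(n,q,k,s) \leq \binom{q}{k} + (n-q)\binom{q-1}{k-2}$.

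Let $\hf$ be a shifted extremal family with $\omega(\hf) = q$ and $\nu(\hf) \leq s$. By Proposition \ref{prop-2}, $\binom{[q]}{k} \subset \hf$; set $\hg = \hf \setminus \binom{[q]}{k}$. Every $F \in \hg$ satisfies $|F \cap [q]| = k-1$: indeed, if $|F \cap [q]| \leq k-2$, then $|[q] \setminus F| \geq sk$ accommodates $s$ pairwise disjoint $k$-subsets (all in $\binom{[q]}{k} \subset \hf$), and together with $F$ these form an $(s+1)$-matching. Writing $\hg_t = \{S \in \binom{[q]}{k-1} : S \cup \{t\} \in \hg\}$ for $t \in [q+1,n]$, shiftedness of $\hf$ gives the chain $\hg_{q+1} \supseteq \hg_{q+2} \supseteq \cdots \supseteq \hg_n$. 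A second matching argument---if $S \in \hg_t$ and $S' \in \hg_{t'}$ ($t \neq t'$) were disjoint, one could extend $\{S \cup \{t\}, S' \cup \{t'\}\}$ by $s-1$ pairwise disjoint $k$-subsets drawn from $[q] \setminus (S \cup S')$, which has $(s-1)k$ elements---shows that $\hg_t$ and $\hg_{t'}$ are cross-intersecting as $(k-1)$-uniform families on $[q]$ whenever $t \neq t'$. In particular, $\hg_t$ is itself intersecting for each $t \geq q+2$.

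I then split on whether $\hg_{q+2} = \emptyset$. If so, the chain forces $\hg_t = \emptyset$ for all $t \geq q+2$, hence $|\hg| \leq \binom{q}{k-1}$ and $|\hf| \leq \binom{q+1}{k} = \binom{sk+k-1}{k}$. Otherwise, apply Theorem \ref{prop6-1} on the universe $[q]$ with $\ha = \hg_{q+1}$ and $\hb = \hg_{q+2}$ as $(k-1)$-uniform cross-intersecting families, intersecting parameter $t = 1$, and weight $\beta = n-q-1$. The $i=1$ summand of the theorem's right-hand side equals $(1 + \beta)\binom{q-1}{k-2} = (n-q)\binom{q-1}{k-2}$, so---once one shows that the maximum is attained at $i = 1$---this produces $|\hg_{q+1}| + (n-q-1)|\hg_{q+2}| \leq (n-q)\binom{q-1}{k-2}$. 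Combining with $|\hg_t| \leq |\hg_{q+2}|$ (from the chain) for $t \geq q+2$ yields $|\hg| \leq (n-q)\binom{q-1}{k-2}$ and the bound $|\hf| \leq \binom{q}{k} + (n-q)\binom{q-1}{k-2}$.

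The main obstacle is the application of Theorem \ref{prop6-1}: matching its hypotheses requires an upper bound $\sigma \geq \nu(\hg_{q+1})$ with $q \geq (2\sigma+1)(k-1)$, yet $\hg_{q+1}$ is not in general intersecting. The key subsidiary lemma is thus a tight bound on $\nu(\hg_{q+1})$ in the case $\hg_{q+2} \neq \emptyset$, which should follow from combining the cross-intersection with $\{1,\ldots,k-1\} \in \hg_{q+2}$, the constraint $\nu(\hf) \leq s$, and shifting: any disjoint system in $\hg_{q+1}$ must partition a non-empty subset of $[k-1]$, and the extensibility of such systems via $\binom{[q]}{k}$ and $\hg_{q+2} \cdot \{q+2\}$ bounds the system's length. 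One must also verify that $f(i) := \binom{q}{k-1} - \binom{q-i}{k-1} + \beta\binom{q-i}{k-1-i}$ is non-increasing for $i \geq 1$, which (by Pascal's identity) reduces to $\beta\binom{q-i-1}{k-1-i} \geq \binom{q-i-1}{k-2}$ and holds when $n - q - 1 \geq 1$, i.e., $n \geq (s+1)k$, for small $i$; the borderline cases near $i = \sigma$ are controlled by the bound on $\sigma$. If a single application cannot cover all configurations, iterating Theorem \ref{prop6-1} along the chain to obtain the Hilton-type bound $\sum_t |\hg_t| \leq \max\{\binom{q}{k-1},\, (n-q)\binom{q-1}{k-2}\}$ closes the argument.
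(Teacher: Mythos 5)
Your reduction to the shifted case, the fact that every non-clique edge meets $[q]$ in exactly $k-1$ vertices, the chain $\hg_{q+1}\supseteq\hg_{q+2}\supseteq\cdots\supseteq\hg_n$ (in the paper's notation $\hg_t=\hf(t,[q])$), and the cross-intersecting property of the $\hg_t$'s are all exactly as in the paper. The gap is in how you invoke Theorem \ref{prop6-1}: you take the theorem's $\mathcal{A}$ to be $\hg_{q+1}$ (weight $1$) and its $\mathcal{B}$ to be $\hg_{q+2}$ (weight $\beta=n-q-1$, $t=1$), so you must (i) produce $\sigma\geq\nu(\hg_{q+1})$ with $q\geq(2\sigma+1)(k-1)$ and (ii) show the maximum in \eqref{6-1} is attained at $i=1$. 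Neither is available. For (i), the only bound valid in general is $\nu(\hg_{q+1})\leq k-1$ (every member of $\hg_{q+1}$ meets a fixed member of $\hg_{q+2}$), and this value is attained: the shifted family
\[
\binom{[q]}{k}\cup\bigl\{S\cup\{q+1\}\colon S\in\tbinom{[q]}{k-1},\ S\cap[k-1]\neq\emptyset\bigr\}\cup\bigl\{[k-1]\cup\{q+2\}\bigr\}
\]
has $\nu=s$, $\omega=q$, $\hg_{q+2}\neq\emptyset$ and $\nu(\hg_{q+1})=k-1$ as soon as $q\geq(k-1)^2$. With $\sigma=k-1$ the hypothesis of Theorem \ref{prop6-1} requires $sk+k-2\geq(2k-1)(k-1)$, which fails whenever $s$ is small compared with $k$ (e.g. $k=5$, $s=4$, $q=23<36$); so the ``key subsidiary lemma'' you defer to cannot be proved in the form you need, and maximality of $\hf$ does not obviously rescue it. For (ii), with $f(i)=\binom{q}{k-1}-\binom{q-i}{k-1}+\beta\binom{q-i}{k-1-i}$ one has $f(i+1)-f(i)=\binom{q-i-1}{k-2}-\beta\binom{q-i-1}{k-1-i}$, which is positive for every $i\geq2$ when $\beta$ is small (e.g. $\beta=1$, i.e. $n=(s+1)k$); so $f$ is not non-increasing, the maximum can sit at $i=\sigma$, and the extra comparison needed there is not carried out.

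The paper sidesteps both issues by assigning the roles in Theorem \ref{prop6-1} the other way round: on the universe $[q]$ it takes $\mathcal{A}=\hf(q+2,[q])$, which is intersecting, so the theorem is applied with $s=1$; it takes $\mathcal{B}=\hf(q+1,[q])$ with $t=0$, so no condition on $\mathcal{B}$ beyond cross-intersection is needed; and it uses $\beta=\frac{1}{n-q-1}$, multiplying by $n-q-1$ at the end. Then the universe condition reduces to $q\geq3(k-1)$, true for all $s\geq2$, and the maximum in \eqref{6-1} runs only over $i\in\{0,1\}$, giving precisely $\max\{\binom{q}{k-1},(n-q)\binom{q-1}{k-2}\}$ with no monotonicity analysis and no case distinction on whether $\hg_{q+2}$ is empty. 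Since you have already established all the ingredients this application needs ($\hg_{q+2}$ intersecting, $\hg_{q+1},\hg_{q+2}$ cross-intersecting, and $\sum_{t\geq q+1}|\hg_t|\leq|\hg_{q+1}|+(n-q-1)|\hg_{q+2}|$ from the chain), reversing the roles repairs your argument; as written, it does not go through.
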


For $k=2$, Conjecture \ref{conj-2} is confirmed.

\begin{thm}\label{specialcase-1}
Let $s\geq 2$, $s+1\leq q\leq 2s+1$ and $n\geq 2s+2$. Then
\[
m(n,q,2,s) = \max \left\{ \binom{2s+1}{2},\binom{q}{2}+(2s+1-q)(n-q)\right\}.
\]
\end{thm}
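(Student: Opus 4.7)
The lower bound is immediate from the constructions $\binom{[2s+1]}{2}$ and $\ha(n,q,2,s)$. My plan for the upper bound is to apply \eqref{1-7}, which reduces the problem to bounding $m^*(n,t,2,s)$ for each $t \in [q, 2s+1]$. Setting $f(t) := \binom{t}{2} + (2s+1-t)(n-t)$, a short calculation shows $f$ has constant second difference $3$ and hence is strictly convex, so $\max_{q \leq t \leq 2s+1} f(t) = \max\{f(q), f(2s+1)\} = \max\{f(q), \binom{2s+1}{2}\}$. It therefore suffices to prove $m^*(n,t,2,s) \leq \max\{\binom{2s+1}{2}, f(t)\}$ for each $t \in [s+1, 2s+1]$. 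The boundary cases are routine: for $t = 2s+1$ any edge outside the $(2s+1)$-clique combined with a perfect matching of the remaining $2s$-clique yields $\nu \geq s+1$, forcing $G = \binom{[2s+1]}{2}$; for $t = s+1$, Proposition~\ref{prop-2} forces $G \subset \mathcal{E}(n,2,s)$, which has exactly $f(s+1)$ edges.

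For the main range $s+2 \leq t \leq 2s$, let $G$ be shifted with $\nu(G) \leq s$ and $\omega(G) = q := t$, and set $p := 2s+1-q$, so $1 \leq p \leq s-1$. Standard shift arguments yield $\binom{[q]}{2} \subset G$, and since $\omega(G) = q$ forces $[q+1]$ not to be a clique, $\{q,q+1\}$ is a non-edge. By upward-closure of non-edges in a shifted graph, $[q+1,n]$ is then independent and $q$ has no neighbour in $[q+1,n]$. Writing $d_i^{\mathrm{out}} := |N(i) \cap [q+1,n]|$ for $i \leq q-1$, shiftedness gives $d_1^{\mathrm{out}} \geq \cdots \geq d_{q-1}^{\mathrm{out}}$ with $N(i) \cap [q+1,n] = [q+1, q + d_i^{\mathrm{out}}]$.

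The crux of the argument is that the bipartite graph $B$ of outgoing edges satisfies $\nu(B) \leq p$. If instead $\nu(B) \geq p+1$, Hall's condition applied to $[p+1]$ in the nested structure forces $d_i^{\mathrm{out}} \geq p+2-i$ for every $i \in [p+1]$, so the bipartite matching $\{\{i,q+p+2-i\} : i \in [p+1]\}$ exists; combined with a perfect matching of the $2(s-p)$-clique on $[p+2,q]$ this produces $(p+1)+(s-p)=s+1$ pairwise disjoint edges in $G$, contradicting $\nu(G) \leq s$. Hall in the contrapositive then yields some $i^* \in [p+1]$ with $d_{i^*}^{\mathrm{out}} \leq p+1-i^*$; by monotonicity $d_i^{\mathrm{out}} \leq p+1-i^*$ for all $i \geq i^*$, and trivially $d_i^{\mathrm{out}} \leq n-q$ for $i < i^*$. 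Therefore
\[
\sum_{i=1}^{q-1} d_i^{\mathrm{out}} \leq \sigma(i^*) := (i^*-1)(n-q) + (q-i^*)(p+1-i^*).
\]
Since $\sigma$ is convex in $i^*$ (leading coefficient $+1$), its maximum over $\{1,\ldots,p+1\}$ is attained at an endpoint, $\max\{\sigma(1),\sigma(p+1)\} = \max\{p(q-1),\, p(n-q)\}$. A direct computation using $q \leq 2s$ gives $p(q-1) \leq \binom{2s+1}{2} - \binom{q}{2}$, so $|G| = \binom{q}{2} + \sum d_i^{\mathrm{out}} \leq \max\{\binom{2s+1}{2},\,\binom{q}{2}+p(n-q)\}$, as required.

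The hard part will be the bipartite matching bound $\nu(B) \leq p$; once it is in place, the rest is elementary. A subtle feature is that the naive bound $p(n-q)$ on the number of outgoing edges is not always correct — for $n$ close to $2s+2$ the bound $p(q-1)$ dominates — but the convexity of $\sigma$ automatically splits the argument into the two regimes corresponding to the two extremal families $\binom{[2s+1]}{2}$ and $\ha(n,q,2,s)$.
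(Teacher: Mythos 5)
Your proposal is correct, but it takes a genuinely different route from the paper. The paper's proof is much shorter: keeping only $\omega(\hg)\geq q$ (no need to fix the clique number exactly), it considers the diagonal matching $E_i=(i,2s+3-i)$, $1\leq i\leq s+1$, notes that some $E_j\notin\hg$ with $j\leq 2s+2-q$, and then invokes Remark 2 of \cite{AF85} to conclude $\hg\subset\hg_j$, where $\hg_j$ consists of all pairs meeting $[j-1]$ together with $\binom{[2s+2-j]}{2}$; maximizing $|\hg_j|$ over $1\leq j\leq 2s+2-q$ (an endpoint/convexity observation left implicit) gives the bound. You instead reduce via \eqref{1-7} to exact clique number $t$, handle $t=s+1$ and $t=2s+1$ separately (via Proposition \ref{prop-2}), and for $s+2\leq t\leq 2s$ re-derive the needed structure from scratch: shiftedness makes $\{q,q+1\}$ a non-edge, hence $[q+1,n]$ independent with nested outgoing neighbourhoods, and your defect-Hall argument (a matching of size $p+1$ across the cut plus a perfect matching of the clique on $[p+2,q]$ would give $\nu\geq s+1$) produces the vertex $i^*$ with $d_{i^*}^{\mathrm{out}}\leq p+1-i^*$; convexity of $\sigma$ and of $f$ then finish. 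I checked the computations ($\sigma(1)=p(q-1)$, $\sigma(p+1)=p(n-q)$, $(q-1)(4s+2-q)\leq 2s(2s+1)$ for $q\leq 2s$, and $f(s+1)=|\mathcal{E}(n,2,s)|$) and they are right. What each approach buys: the paper's argument is a two-line application of a known containment for shifted graphs missing a prescribed edge and never needs $\omega$ fixed exactly; yours is self-contained (no appeal to \cite{AF85}), and in effect reproves that containment in the form of the nested-neighbourhood/Hall analysis, at the cost of the extra reduction through $m^*$ and two explicit convexity steps that the paper leaves implicit in its maximization over $j$.
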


\section{A cross-intersecting theorem for direct products}

The first author \cite{F96} proved an Erd\H{o}s-Ko-Rado theorem for direct products by applying the cyclic permutation method of Katona \cite{K72}. In this section, we extend it to a cross-intersecting theorem for direct products, which is a principal tool in proving Theorem \ref{main-1}.

Suppose that $1\leq l< k$ and $X = X_1\cup X_2$ with $|X_i| = n_i$ for $i=1,2$. Define
\[
\hh(n_1,n_2,k,l) =\left\{F\in \binom{X}{k}\colon |F\cap X_1| =l \mbox{ and } |F\cap X_2| =k-l \right\}.
\]

\begin{thm}[\cite{F96}] \label{directEKR}
 Suppose that $n_1\geq 2l$, $n_2\geq 2(k-l)$ and $\hf\subset \hh(n_1,n_2,k,l)$ is intersecting. Then
\[
|\hf|\leq \max\left\{\binom{n_1-1}{l-1}\binom{n_2}{k-l},\binom{n_1}{l}\binom{n_2-1}{k-l-1}\right\}.
\]
\end{thm}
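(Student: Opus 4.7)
My plan is to apply Katona's cyclic permutation method \cite{K72}, extended from a single cycle to a direct product of two cycles. I would fix a pair of cyclic orderings $\pi_1$ of $X_1$ and $\pi_2$ of $X_2$ and single out the $n_1 n_2$ \emph{arc-sets} $A_{i,j}$ consisting of $l$ consecutive elements of $\pi_1$ starting at position $i$, together with $k-l$ consecutive elements of $\pi_2$ starting at position $j$. A routine count shows that each $F \in \hh(n_1,n_2,k,l)$ is an arc-set of exactly $l!(n_1-l)!(k-l)!(n_2-k+l)!$ of the $(n_1-1)!(n_2-1)!$ pairs of cyclic orderings, so that double counting pairs $(F,(\pi_1,\pi_2))$ with $F\in\hf$ an arc-set of $(\pi_1,\pi_2)$ reduces the whole problem to a uniform upper bound on the number of arc-sets of $\hf$ per fixed $(\pi_1,\pi_2)$.

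The heart of the argument is a key lemma: if $\hf$ is intersecting then, for each $(\pi_1,\pi_2)$, the number of arc-sets $A_{i,j}$ that lie in $\hf$ is at most $\max\{l n_2,\, n_1(k-l)\}$. Two arc-sets $A_{i,j}$ and $A_{i',j'}$ meet iff the $X_1$-arcs overlap (cyclic distance between $i$ and $i'$ in $[n_1]$ is at most $l-1$) or the $X_2$-arcs overlap (cyclic distance between $j$ and $j'$ in $[n_2]$ is at most $k-l-1$), so the lemma is really a combinatorial statement about subsets $T\subseteq[n_1]\times[n_2]$ in which any two elements are close in one of the two coordinates. I would argue by cases. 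If every two elements of $T$ are close in the first coordinate, then Katona's one-dimensional bound for intersecting families of $l$-arcs on a cycle (valid because $n_1\geq 2l$) bounds the first-coordinate projection by $l$, giving $|T|\leq l n_2$; the symmetric case gives $|T|\leq n_1(k-l)$. In the genuinely mixed case, one picks $(i_0,j_0),(i_1,j_1)\in T$ whose $X_1$-arcs are disjoint; compatibility with both of them forces every other element's $X_2$-coordinate to lie in the intersection of two cyclic windows around $j_0$ and $j_1$, which, together with a symmetric pinning in the other coordinate and the triangle inequality for cyclic distance, keeps $|T|$ inside one of the two canonical configurations.

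Once the key lemma is established, the double count
\[
|\hf|\cdot l!(n_1-l)!(k-l)!(n_2-k+l)! \;\leq\; (n_1-1)!(n_2-1)!\cdot \max\{l n_2,\,n_1(k-l)\}
\]
simplifies directly to the stated $|\hf|\leq \max\{\binom{n_1-1}{l-1}\binom{n_2}{k-l},\,\binom{n_1}{l}\binom{n_2-1}{k-l-1}\}$. The main obstacle is the key lemma, specifically the mixed case: intersecting families of $l$-arcs on $C_{n_1}$ fail the Helly property at the threshold $n_1=2l$ (three $l$-arcs can pairwise meet with no common point), so one cannot reduce naively to "all arcs pass through a common point"; both assumptions $n_1\geq 2l$ and $n_2\geq 2(k-l)$ must be used to force the product structure into one of the two extremal shapes, and the boundary regime $n_1=2l$ or $n_2=2(k-l)$ requires particular care.
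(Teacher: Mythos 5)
Your overall strategy is the right one, and it is in fact the strategy of the source: this paper does not prove Theorem~\ref{directEKR} at all but imports it from \cite{F96}, whose argument is precisely Katona's cycle method \cite{K72} run over pairs of cyclic orders. Your double count is correct (each $F\in\hh(n_1,n_2,k,l)$ is an arc-set of exactly $l!(n_1-l)!(k-l)!(n_2-k+l)!$ of the $(n_1-1)!(n_2-1)!$ pairs, and the arithmetic does turn $\max\{ln_2,(k-l)n_1\}$ into the stated binomial bound), so everything reduces to the per-permutation bound, which is exactly Lemma~\ref{cross-interval}, also quoted here from \cite{F96}.

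The gap is in your proof of that key lemma, in the mixed case, and it is genuine. The assertion that compatibility with the two chosen members $(i_0,j_0),(i_1,j_1)$ ``forces every other element's $X_2$-coordinate to lie in the intersection of two cyclic windows around $j_0$ and $j_1$'' is false: an element whose $X_1$-arc meets the $X_1$-arc of $(i_0,j_0)$ (or of $(i_1,j_1)$) receives no constraint at all in the second coordinate from that pair. There can be up to $2(2l-1)$ first-coordinate arcs meeting $I_0\cup I_1$, and a priori each of them may carry up to $n_2$ second-coordinate arcs, so the ``pinned plus unpinned'' count your sketch produces is of order $4l\,n_2$, well above the claimed maximum; no triangle-inequality manipulation of cyclic distances can close this, because the target bound is tight (equality holds at the two canonical configurations), so crude window counts cannot suffice. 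What the mixed case actually requires is the quantitative cross-intersecting statement for arcs on one cycle, Proposition~\ref{prop-1}(ii): if $\hb\subset\hc(\sigma,b)$ and $\hd\subset\hc(\sigma,d)$ are nonempty, cross-intersecting and $b+d\leq m$, then $|\hb|+|\hd|\leq b+d$. This is applied to the link families $N(I),N(I')$ of disjoint first-coordinate arcs to control degrees, and that degree bookkeeping (as in \cite{F96}, and as adapted in the proof of Theorem~\ref{cross-direct} in this paper) is what brings the mixed case down to $\max\{ln_2,(k-l)n_1\}$. As written, your argument settles only the two pure cases, so the key lemma --- and with it the theorem --- is not yet proved.
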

 Let $\sigma=(x_0,\ldots,x_{m-1})$ be a cyclic permutation on $[m]$, $1\leq l<m$ an integer. Define
 \[
 \hc(\sigma,l) =\left\{\{x_i,x_{i+1},\ldots,x_{i+l-1}\}\colon i=0,1,\ldots,m-1\right\},
 \]
 reducing the subscripts modulo $m$.

\begin{prop}[\cite{F96}]\label{prop-1}
Let $\sigma$ be a cyclic permutation on $[m]$. If $\hb\subset \hc(\sigma, b)$ and $\hd\subset \hc(\sigma, d)$ are cross-intersecting and $b+d\leq m$, then the following hold:

(i) $|\hb|+|\hd| \leq m$,

(ii) $|\hb|+|\hd| \leq b+d$ if both $\hb$ and $\hd$ are non-empty.
\end{prop}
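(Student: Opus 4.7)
The plan is to identify each arc $\{x_i, x_{i+1}, \ldots, x_{i+l-1}\}$ in $\hc(\sigma, l)$ with its starting index $i \in \mathbb{Z}/m\mathbb{Z}$, so that $\hb$ and $\hd$ become subsets of $\mathbb{Z}/m$. A direct check shows that $B_i \cap D_j \neq \emptyset$ if and only if $j - i \in \{-(d-1), \ldots, b-1\} \pmod m$, a cyclic arc of size $b + d - 1 \leq m - 1$. The cross-intersecting hypothesis thus reads: for all $i \in \hb$ and $j \in \hd$, one has $j \in i + [-(d-1), b-1] \pmod m$.

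For part (ii), I assume both $\hb$ and $\hd$ are non-empty. Fixing any $j_0 \in \hd$ forces $\hb \subseteq [j_0 - b + 1, j_0 + d - 1] \pmod m$, an arc of length $b + d - 1 < m$, so $\hb \subsetneq \mathbb{Z}/m$. In the principal case $\hb \cup \hd \neq \mathbb{Z}/m$, I rotate the cyclic permutation so that $\hb \cup \hd \subseteq \{0, 1, \ldots, m-2\}$; the modular condition then reduces to the honest integer condition $j - i \in [-(d-1), b-1]$ in $\mathbb{Z}$. Setting $a = \min \hb$ and $A = \max \hb$, imposing this condition at $i = a$ and $i = A$ forces $\hd \subseteq [A - d + 1, a + b - 1]$, an interval containing at most $b + d - 1 - (A - a)$ integers. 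Combined with $|\hb| \leq A - a + 1$, this yields $|\hb| + |\hd| \leq b + d$.

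The degenerate case $\hb \cup \hd = \mathbb{Z}/m$ I will handle separately. If $b + d < m$, the $m - b - d + 1 \geq 2$ positions outside $[j_0 - b + 1, j_0 + d - 1]$ must all lie in $\hd$; applying the cross-intersection constraint to these new elements of $\hd$ iteratively shrinks the allowed set for $\hb$, and one checks that the resulting $\hb \cup \hd$ still misses some position, contradicting $\hb \cup \hd = \mathbb{Z}/m$. Thus $b + d = m$. A short additional check, comparing the unique missing position of each allowed arc, shows that any putative common element $i_0 \in \hb \cap \hd$ would also force $\hb \cup \hd \neq \mathbb{Z}/m$ (the arc for $\hb$ misses $i_0 + d$, the arc for $\hd$ misses $i_0 + b$, and these either coincide when $b = d$ or lie in the wrong range when $b \neq d$). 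Hence $\hb \cap \hd = \emptyset$ and $|\hb| + |\hd| = m = b + d$, matching the bound.

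For part (i), if either $\hb$ or $\hd$ is empty, then $|\hb| + |\hd| \leq m = |\hc(\sigma, l)|$ is immediate; otherwise (i) follows from (ii) and the hypothesis $b + d \leq m$. The main technical obstacle is the cyclic wrap-around in the degenerate case: two cyclic arcs in $\mathbb{Z}/m$ can intersect in two disjoint sub-arcs, which breaks the naive ``extreme-element'' bound on $|\hd|$ coming from the endpoints of the shortest arc containing $\hb$. Splitting off the degenerate case $\hb \cup \hd = \mathbb{Z}/m$ and reducing the principal case to an honest linear problem via rotation is the way to bypass this.
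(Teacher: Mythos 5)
The paper itself does not prove this proposition (it is quoted from \cite{F96}), so I am judging your argument on its own merits. The intersection criterion $B_i\cap D_j\neq\emptyset$ iff $j-i\in\{-(d-1),\ldots,b-1\}\ (\mathrm{mod}\ m)$ is correct, and so is the reduction of (i) to (ii). The genuine gap is the linearization step in your ``principal case''. From $\hb\cup\hd\subseteq\{0,1,\ldots,m-2\}$ you conclude that the modular condition becomes the integer condition $j-i\in[-(d-1),b-1]$; that implication is false. Take $m=10$, $b=d=3$, $\hb=\{B_0\}$ with $B_0=\{x_0,x_1,x_2\}$ and $\hd=\{D_8\}$ with $D_8=\{x_8,x_9,x_0\}$: this pair is cross-intersecting, both families are non-empty, and the start indices $\{0,8\}$ already avoid $m-1=9$, so your normalization is met by the identity rotation; yet $j-i=8\notin[-2,2]$. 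Your next step (with $a=A=0$) would force $\hd\subseteq[A-d+1,a+b-1]=[-2,2]$ and hence exclude $D_8$, i.e.\ the argument derives a false statement. The point is that restricting the \emph{start} indices to $\{0,\ldots,m-2\}$ does not stop the arcs themselves, which extend $b-1$ or $d-1$ steps beyond their starts, from wrapping across the cut; so wrap-around intersections occur in your principal case too, and the dichotomy $\hb\cup\hd\neq\mathbb{Z}/m$ versus $\hb\cup\hd=\mathbb{Z}/m$ does not isolate the difficulty.

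Nor is this fixed by merely choosing a cleverer rotation: what you need is a cut that no relevant arc straddles, and such a cut need not exist (for $m=10$, $b=3$, the five arcs starting at $0,2,4,6,8$ cover every gap of the cycle, and they occur in legitimate cross-intersecting configurations, e.g.\ with $d=7$ and $\hd=\{D_2\}$). So a correct proof needs either a justified choice-of-cut lemma or a different mechanism. One clean repair along your lines: for $B$ with start $t$, the $d$-arcs disjoint from $B$ are exactly those starting in the arc $[t+b,t+m-d]$ of length $m-b-d+1$; cross-intersection says the starts of $\hd$ avoid $U=\bigcup_{t}[t+b,t+m-d]$ (union over starts $t$ of $\hb$), and the union of $k$ distinct arcs of equal length $L$ in $\mathbb{Z}/m$ has size at least $\min\{m,L+k-1\}$ (proved by cutting at an uncovered point; if there is none, $U=\mathbb{Z}/m$ and $\hd=\emptyset$). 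This gives $|\hd|\leq m-|U|\leq b+d-|\hb|$, which is (ii). Finally, note that your degenerate case is itself only sketched: the ``iteratively shrinks \ldots one checks'' step and the disjointness claim for $b\neq d$ when $b+d=m$ both need real arguments (they are fixable), but the principal-case reduction is the substantive gap.
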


\begin{lem}[\cite{F96}]\label{cross-interval}
Let $\sigma$ be a cyclic permutation on $X_1$ and $\pi$ be a cyclic permutation on $X_2$. If $\hk \subset \hc(\sigma,l)\times \hc(\pi,k-l)$ is an intersecting family then
\[
|\hk|\leq \max\{ln_2,(k-l)n_1\}.
\]
\end{lem}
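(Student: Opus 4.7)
The plan is to prove Lemma~\ref{cross-interval} by analyzing the two projections
\[
\hb' = \{B\in\hc(\sigma,l) : \exists D,\ (B,D)\in\hk\}, \quad \hc' = \{D\in\hc(\pi,k-l) : \exists B,\ (B,D)\in\hk\}
\]
of $\hk$, together with the fibres $\hd(B) = \{D : (B,D)\in\hk\}$ for $B\in\hb'$. The intersection condition on $\hk$ translates to the statement that for any $(B_1,D_1),(B_2,D_2)\in\hk$, either $B_1\cap B_2\neq\emptyset$ or $D_1\cap D_2\neq\emptyset$. Throughout I use the standing hypotheses $n_1\geq 2l$ and $n_2\geq 2(k-l)$ that are implicit from the setting of Theorem~\ref{directEKR}.

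First I would dispose of the cases where one projection is intersecting. If $\hb'\subseteq\hc(\sigma,l)$ is intersecting, then a standard cyclic-arc argument (using $n_1\geq 2l$) shows that the indices of the $l$-intervals in $\hb'$ must all lie in an arc of length $l-1$ on $\mathbb{Z}/n_1\mathbb{Z}$, hence $|\hb'|\leq l$ and $|\hk|\leq |\hb'|\,n_2\leq l\,n_2$. Symmetrically, if $\hc'$ is intersecting then $|\hk|\leq (k-l)\,n_1$. Assume henceforth that neither projection is intersecting, so there exist disjoint $B_1,B_2\in\hb'$ and disjoint $D_1,D_2\in\hc'$.

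The key quantitative step is this: for any two disjoint $B_1,B_2\in\hb'$ the fibres $\hd(B_1),\hd(B_2)\subseteq\hc(\pi,k-l)$ are cross-intersecting and both non-empty, so Proposition~\ref{prop-1}(ii) forces $|\hd(B_1)|+|\hd(B_2)|\leq 2(k-l)$. Consequently the ``popular'' set $\hb^* = \{B\in\hb' : |\hd(B)|>k-l\}$ is itself intersecting (two disjoint members would contradict the above), so $|\hb^*|\leq l$. If $\hb^*=\emptyset$ then every fibre satisfies $|\hd(B)|\leq k-l$, whence $|\hk|\leq (k-l)|\hb'|\leq (k-l)\,n_1$ as required.

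The last and hardest subcase is $\hb^*\neq\emptyset$. Picking $B^*\in\hb^*$, the fibre $\hd(B^*)$ exceeds the intersecting-family bound in $\hc(\pi,k-l)$ and so, by the cyclic argument of the first step applied with the roles of $X_1$ and $X_2$ swapped, is itself non-intersecting; choose disjoint $D_1^*,D_2^*\in\hd(B^*)$. Then every $(B',D')\in\hk$ with $B'\cap B^*=\emptyset$ must have $D'$ meeting both $D_1^*$ and $D_2^*$, a stringent ``straddling'' interval-geometry condition on the cycle $\pi$ that confines $D'$ to a short arc; meanwhile, the $B'$'s with $B'\cap B^*\neq\emptyset$ lie in a cyclic arc of length at most $2l-1$. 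The main obstacle will be converting this straddling geometry into the precise bound $l\,n_2$, and separately handling the degenerate regimes $l=1$ and $k-l=1$ in which a length-one interval cannot straddle two disjoint intervals at all, forcing $\hb^*=\hb'$ and collapsing us back to the first step. I expect the closure to come from a further application of Proposition~\ref{prop-1}(i) to the pair of cross-intersecting fibres across each pair of disjoint $B$'s, combined with an explicit arc-count of the $D'$ satisfying the straddling condition.
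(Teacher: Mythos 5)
Your argument is sound up to, but not including, its final case, and that final case is the actual content of Lemma~\ref{cross-interval}. (A contextual remark: the paper offers no proof of this lemma at all --- it is quoted from \cite{F96} --- so your write-up has to stand entirely on its own.) The reductions you do carry out are correct: if a projection is intersecting you get $l n_2$ resp.\ $(k-l)n_1$ from Katona's bound for intersecting families of arcs; for disjoint $B_1,B_2\in\hb'$ the fibres are nonempty and cross-intersecting, so Proposition~\ref{prop-1}(ii) gives $|\hd(B_1)|+|\hd(B_2)|\le 2(k-l)$; hence $\hb^*$ is intersecting, $|\hb^*|\le l$, and the subcase $\hb^*=\emptyset$ yields $(k-l)n_1$. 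One small inaccuracy along the way: an intersecting family of $l$-arcs need not have its starting positions confined to a window of length $l-1$ (on a $6$-cycle the arcs $\{1,2,3\},\{3,4,5\},\{5,6,1\}$ pairwise intersect), but only the size bound $|\hb'|\le l$ is needed and that is exactly Katona's lemma, so this is cosmetic.

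The genuine gap is the case $\hb^*\neq\emptyset$ with $\hb'$ not intersecting: there you only list hoped-for ingredients (``I expect the closure to come from\ldots''), and the ingredients named do not close it as stated. The $B'$ meeting $B^*$ occupy up to $2l-1$ positions, and bounding each of their fibres by $n_2$ gives $(2l-1)n_2$, which already exceeds the target $l n_2$ for $l\ge 2$; the straddling condition on the $B'$ disjoint from $B^*$ confines their fibres to at most $2(k-l)-2$ arcs (when $n_2=2(k-l)$ both gaps between $D_1^*,D_2^*$ are empty), which is no better than the bound $|\hd(B')|\le 2(k-l)-|\hd(B^*)|<k-l$ you already have from Proposition~\ref{prop-1}(ii). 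Summing these two contributions only gives something like $l n_2+(k-l)n_1$, i.e.\ the sum rather than the maximum; to get the stated bound one must additionally exploit that among the arcs meeting $B^*$ the far-apart ones are pairwise disjoint, so their fibres are again mutually constrained by Proposition~\ref{prop-1}(ii), and then carry out a careful arc count that survives the tight regimes $n_1$ close to $2l$ and $n_2=2(k-l)$. Your remark on the degenerate cases does not rescue this either: for $k-l=1$ and $\hb^*\ne\emptyset$ one only learns that every member of $\hb'$ meets $B^*$ (not that $\hb^*=\hb'$, nor that $\hb'$ is intersecting), which again gives $(2l-1)n_2$ without the missing pairing argument. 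So the proposal settles the easy cases but leaves the decisive case of the lemma unproved.
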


We introduce some notations. Let $\hk \subset \hc(\sigma,l)\times \hc(\pi,k-l)$.
For any $I\in \hc(\sigma,l)$, let
\[
N(I,\hk) = \{J\in \hc(\pi,k-l)\colon I\cup J \in \hk\}
\]
and $\deg(I,\hk) =|N(I,\hk)|$.

\begin{thm}\label{cross-direct}
Let $1\leq l<l'\leq k-1$, $n_2\geq 4kn_1$ and $n_1\geq l+l'$. Let  $\hf\subset \hh(n_1,n_2,k,l)$ and $\hf'\subset \hh(n_1,n_2,k,l')$. Suppose that $\hf$ is intersecting and $\nu(\hf')\leq s$, moreover $\hf$ and $\hf'$ are cross-intersecting. Then we have
\[
|\hf| +|\hf'|\leq \max\left\{\binom{n_1-1}{l-1}\binom{n_2}{k-l}+\binom{n_1-1}{l'-1}\binom{n_2}{k-l'}, 2s\binom{n_1-1}{l'-1}\binom{n_2}{k-l'}\right\}.
\]
\end{thm}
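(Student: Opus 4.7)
The plan is to adapt Katona's cyclic-permutation method from the proof of Theorem~\ref{directEKR} to the cross-intersecting setting. For each pair of cyclic permutations $\sigma$ on $X_1$ and $\pi$ on $X_2$ I would set
\[
\hk_{\sigma,\pi}=\{(I,J)\in\hc(\sigma,l)\times\hc(\pi,k-l)\colon I\cup J\in \hf\},
\]
\[
\hk'_{\sigma,\pi}=\{(I',J')\in\hc(\sigma,l')\times\hc(\pi,k-l')\colon I'\cup J'\in \hf'\}.
\]
Then $\hk_{\sigma,\pi}$ inherits the intersecting property, $\nu(\hk'_{\sigma,\pi})\le s$, and the two families remain cross-intersecting inside $\binom{X}{k}$. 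The standard averaging over uniform cyclic permutations gives
\[
|\hf|=\frac{\binom{n_1-1}{l-1}\binom{n_2}{k-l}}{l\,n_2}\,E_{\sigma,\pi}|\hk_{\sigma,\pi}|,\qquad |\hf'|=\frac{\binom{n_1-1}{l'-1}\binom{n_2}{k-l'}}{l'\,n_2}\,E_{\sigma,\pi}|\hk'_{\sigma,\pi}|,
\]
so it suffices to prove, for each fixed $(\sigma,\pi)$, the pointwise inequality
\[
\binom{n_1-1}{l-1}\binom{n_2}{k-l}\frac{|\hk_{\sigma,\pi}|}{l\,n_2}+\binom{n_1-1}{l'-1}\binom{n_2}{k-l'}\frac{|\hk'_{\sigma,\pi}|}{l'\,n_2}\le\max\{A,B\},
\]
where $A$ and $B$ are the two arguments of the maximum in the theorem.

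The first term is controlled essentially for free: since $n_2\ge 4kn_1$ one has $l\,n_2\ge(k-l)n_1$, and Lemma~\ref{cross-interval} then delivers $|\hk_{\sigma,\pi}|\le l\,n_2$ unconditionally, bounding the $\hf$-contribution by $\binom{n_1-1}{l-1}\binom{n_2}{k-l}$. The substance of the proof is thus the estimate for $|\hk'_{\sigma,\pi}|$, which I would handle by a dichotomy. When $|\hk'_{\sigma,\pi}|\le l'n_2$, the pointwise sum is already bounded by $A$ and the first case of the maximum suffices. Otherwise, using the matching condition on $\hk'_{\sigma,\pi}$ together with the cross-intersection with $\hk_{\sigma,\pi}$: starting from an arbitrary $(I_0,J_0)\in\hk_{\sigma,\pi}$ (or the empty case when $\hk_{\sigma,\pi}=\emptyset$), cross-intersection forces every $(I',J')\in\hk'_{\sigma,\pi}$ to share an $X_1$-element with $I_0$ (giving at most $l+l'-1$ starting positions for $I'$ on $\sigma$) or an $X_2$-element with $J_0$ (at most $(k-l)+(k-l')-1$ starts for $J'$ on $\pi$). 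Combining this with the arc-cover relation for interval systems on a cycle (matching number $\le s$ implies point-cover of size $\le 2s$, hence at most $2s(k-l')$ active $\pi$-columns) and with Proposition~\ref{prop-1}(ii) applied on the $\sigma$-cycle to cross-intersecting interval families living above disjoint $\pi$-columns, this should yield $|\hk'_{\sigma,\pi}|\le 2sl'n_2$ in the large regime while simultaneously collapsing the $\hk_{\sigma,\pi}$-contribution enough to place the total sum under $B$.

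The main obstacle is executing the second case of the dichotomy and, more subtly, patching the two regimes at the crossover $|\hk'_{\sigma,\pi}|\asymp l'n_2$, where neither bound is individually sharp enough and cross-intersection must be invoked to trade contributions between the two families. This requires a simultaneous analysis of two interval families of distinct lengths on two distinct cycles: an intersecting structure (for $\hk_{\sigma,\pi}$) on the $\sigma$-cycle and a matching structure (for $\hk'_{\sigma,\pi}$) on the $\pi$-cycle, coupled through cross-intersection. The hypothesis $n_2\ge 4kn_1$ is exactly what makes the $\pi$-direction dominant so that the cyclic accounting on $\pi$ absorbs the smaller contributions from $\sigma$, and once the pointwise inequality is in place the averaging step finishes the proof.
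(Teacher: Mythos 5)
Your framework---averaging over pairs of cyclic permutations $\sigma$ on $X_1$, $\pi$ on $X_2$ with the normalizations you write (which match the paper's $\alpha,\beta$), reducing to a pointwise inequality for each $(\sigma,\pi)$, and handling the $\hf$-side by Lemma \ref{cross-interval}---is exactly the paper's approach, and those parts are fine. The genuine gap is the coupling between the two restrictions, which is the entire substance of the theorem: bounding the two restrictions separately ($|\hk_{\sigma,\pi}|\leq ln_2$ and $|\hk'_{\sigma,\pi}|\leq 2sl'n_2$) only gives the sum of the two branches of the maximum, not their maximum, so a joint estimate is indispensable. The paper's decisive step (Claim \ref{claim-1}) is: whenever $\hf\big|_{\sigma,\pi}\neq\emptyset$, one has $|\hf\big|_{\sigma,\pi}|+|\hf'\big|_{\sigma,\pi}|\leq (l+l')n_2$. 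It is proved by introducing, for both restrictions, the families of $\sigma$-intervals of degree greater than $2k-l-l'$ and of positive degree, and applying Proposition \ref{prop-1}(ii) twice: once on the $\sigma$-cycle to produce two disjoint intervals (one from each family), and once on the $\pi$-cycle to conclude that their neighborhoods cannot be cross-intersecting, contradicting the cross-intersection of $\hf$ and $\hf'$; the threshold count also uses $n_2\geq 4kn_1$ to absorb the low-degree intervals. Combined with $\alpha>\beta$ (Claim \ref{claim2-1}) and $|\hf\big|_{\sigma,\pi}|\leq ln_2$, this yields the first branch $\alpha ln_2+\beta l'n_2$, while the branch $2\beta sl'n_2$ is invoked only when $\hf\big|_{\sigma,\pi}=\emptyset$, where Claim \ref{claim-new1} (ordering the $l'$-intervals along $\sigma$, extracting a matching of size $s+1$ from intervals of degree greater than $2s(k-l')$) gives $|\hf'\big|_{\sigma,\pi}|\leq 2sl'n_2$.

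Your proposed substitute for this coupling does not work as sketched. Fixing one $(I_0,J_0)\in\hk_{\sigma,\pi}$ and noting that every member of $\hk'_{\sigma,\pi}$ meets $I_0$ in $X_1$ or $J_0$ in $X_2$ only yields $|\hk'_{\sigma,\pi}|\leq (l+l'-1)n_2+(2k-l-l')n_1$, which is weaker than what is needed by roughly $\beta(l-1)n_2$ per permutation pair and places the weighted sum under neither branch; and your dichotomy on the size of $\hk'_{\sigma,\pi}$ (rather than the paper's dichotomy on whether $\hk_{\sigma,\pi}$ is empty) manufactures exactly the crossover regime you concede you cannot patch. So the proposal identifies the correct method and the correct auxiliary tools, but the key joint bound $(l+l')n_2$ and its degree-threshold proof are missing, and as written the argument does not establish the stated inequality.
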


\begin{proof}
Let $\sigma$ be a cyclic permutation on $X_1$ and $\pi$ be a cyclic permutation on $X_2$. Define the {\it restriction} of $\hf$ on $\sigma$ and $\pi$ as follows:
\[
\hf\big|_{\sigma,\pi}= \left\{F\in \hf\colon F\cap X_1\in \hc(\sigma,l)\mbox{ and } F\cap X_2\in \hc(\pi,k-l)\right\}
\]
and
\[
\hf'\big|_{\sigma,\pi}= \left\{F\in \hf'\colon F\cap X_1\in \hc(\sigma,l')\mbox{ and } F\cap X_2\in \hc(\pi,k-l')\right\}.
\]

We further define
\[
\hi= \left\{I\in \hc(\sigma,l) \colon \deg(I,\hf|_{\sigma,\pi})> k-l+k-l' \right\},
\]
\[
\hj= \left\{J\in \hc(\sigma,l) \colon \deg(J,\hf|_{\sigma,\pi})\geq  1 \right\},
\]
\[
\hi' = \left\{I'\in \hc(\sigma,l') \colon \deg(I',\hf'|_{\sigma,\pi})> k-l+k-l' \right\},
\]
and
\[
\hj' = \left\{J'\in \hc(\sigma,l') \colon \deg(J',\hf'|_{\sigma,\pi})\geq  1\right\}.
\]

\begin{claim}\label{claim-1}
If $\hf\big|_{\sigma,\pi}\neq \emptyset$, then
\[
|\hf\big|_{\sigma,\pi}|+|\hf'\big|_{\sigma,\pi}|\leq (l+l') n_2.
\]
\end{claim}

\begin{proof}
Suppose to the contrary that
\begin{align}\label{eqn2-1}
|\hf\big|_{\sigma,\pi}|+|\hf'\big|_{\sigma,\pi}|> (l+l') n_2.
\end{align}
Then it follows that $|\hi|+|\hi'|\geq l+l'$. Otherwise, since $n_2\geq 4kn_1$, we have
\begin{align*}
|\hf\big|_{\sigma,\pi}|+|\hf'\big|_{\sigma,\pi}|& \leq (l+l'-1)n_2 +(2n_1-l-l'+1)(k-l+k-l')\\
&=(l+l')n_2-(n_2-(2n_1-l-l'+1)(k-l+k-l'))\\
&\leq (l+l')n_2,
\end{align*}
which contradicts our assumption. Now we distinguish four cases.

Case 1. $\hi=\emptyset$.

We claim that $\hj=\emptyset$.  Otherwise, Proposition \ref{prop-1} (ii) implies that there are $I'\in \hi'$ and $J\in \hj$ with $I'\cap J=\emptyset$. Since $\deg(I',\hf'\big|_{\sigma,\pi})> k-l+k-l'$ and $\deg(J,\hf\big|_{\sigma,\pi})\geq 1$, Proposition \ref{prop-1} (ii) implies that $N(I',\hf'\big|_{\sigma,\pi})$ and $N(J,\hf\big|_{\sigma,\pi})$ are not cross-intersecting, which contradicts the fact that $\hf$ and $\hf'$ are cross-intersecting. Thus, $\hj=\emptyset$. It follows that $\hf\big|_{\sigma,\pi}= \emptyset$, which contradicts the condition.

Case 2.  $\hi'=\emptyset$.

As in Case 1, we infer $\hj'=\emptyset$. It follows that $\hf'\big|_{\sigma,\pi}= \emptyset$. Since $\hf\big|_{\sigma,\pi}$ is intersecting, by Lemma \ref{cross-interval} we get
\[
|\hf\big|_{\sigma,\pi}| \leq \max\{ln_2,(k-l)n_1\} =ln_2,
\]
which contradicts \eqref{eqn2-1}.

Case 3. $\hi\neq \emptyset$, $\hi'\neq \emptyset$ and $|\hi|+|\hi'|\geq l+l'+1$.

Proposition \ref{prop-1} (ii) implies that there are $I\in \hi$ and $I'\in \hi'$ with $I\cap I'=\emptyset$. Since $\deg(I,\hf\big|_{\sigma,\pi})> k-l+k-l'$ and $\deg(I',\hf'\big|_{\sigma,\pi})> k-l+k-l'$, Proposition \ref{prop-1} (ii) implies that $N(I,\hf\big|_{\sigma,\pi})$ and $N(I',\hf'\big|_{\sigma,\pi})$ are not cross-intersecting, which contradicts the fact that $\hf$ and $\hf'$ are cross-intersecting.

Case 4.  $\hi\neq \emptyset$, $\hi'\neq \emptyset$ and $|\hi|+|\hi'|= l+l'$.

Then we claim that $\hj=\hi$ and $\hj'=\hi'$. Otherwise, without loss of generality, we may assume that $A\in \hj\setminus\hi$. Proposition \ref{prop-1} (ii) implies that there are $I\in \hi\cup \{A\}$ and $I'\in \hi'$ with $I\cap I'=\emptyset$. Since $\deg(I,\hf\big|_{\sigma,\pi})\geq 1$ and $\deg(I',\hf'\big|_{\sigma,\pi})> k-l+k-l'$, Proposition \ref{prop-1} (ii) implies that $N(I,\hf\big|_{\sigma,\pi})$ and $N(I',\hf'\big|_{\sigma,\pi})$ are not cross-intersecting, which contradicts the fact that $\hf$ and $\hf'$ are cross-intersecting. Thus, $\hj=\hi$, $\hj'=\hi'$ and it follows that
\[
|\hf\big|_{\sigma,\pi}|+|\hf'\big|_{\sigma,\pi}|\leq (l+l') n_2,
\]
which contradicts \eqref{eqn2-1}.
\end{proof}

\begin{claim} \label{claim-new1}
\begin{align}\label{eqn2-2}
|\hf'\big|_{\sigma,\pi}| \leq 2sl'n_2.
\end{align}
\end{claim}
\begin{proof}
Note that $\nu(\hf')\leq s$. If $n_1\leq 2sl'$, then
\[
|\hf'\big|_{\sigma,\pi}| \leq n_1n_2 \leq 2sl'n_2.
\]
Thus, we may assume that $n_1>2sl'$.
Let
\[
\hk= \left\{I\in \hc(\sigma,l') \colon \deg(I,\hf'|_{\sigma,\pi})> 2s(k-l') \right\}.
\]
Let us show that $|\hk| < (s+1)l'$. Suppose for contradiction that $|\hk| \geq (s+1)l'$. Note that $\sigma=(x_0,\ldots,x_{m-1})$. We order the members of $\hk$ as $K_1,K_2,\ldots,K_{(s+1)l'},\ldots$ with respect to the permutation $x_0\ldots x_{m-1}$. Then $\{K_1,K_{l'+1},\ldots,K_{sl'+1}\}$ is a matching of size $s+1$. Since $\deg(K_{il'+1},\hf'|_{\sigma,\pi})> 2s(k-l')$ for each $i=0,1,\ldots,s$, we can easily find pairwise disjoint sets $E_1\in N(K_{il'+1},\hf'|_{\sigma,\pi})$, $\ldots$, $E_{s+1}\in N(K_{sl'+1},\hf'|_{\sigma,\pi})$. Then $\{K_1\cup E_1$, $K_{l'+1}\cup E_2$, $\ldots$, $K_{sl'+1}\cup E_{s+1} \}$ form a matching of size $s+1$ in $\hf'$, a contradiction. Thus $|\hk| < (s+1)l'$. We infer:
\begin{align*}
|\hf'\big|_{\sigma,\pi}| < (s+1)l' n_2+ (n_1-(s+1)l') 2s(k-l').
\end{align*}
Since $n_2\geq 4kn_1$ and $s\geq 2$, $n_12sk\leq (s-1)n_2$. Thus \eqref{eqn2-2} follows.
\end{proof}

It is easy to see that
\[
|\hf| = \alpha\sum_{\sigma \in {\rm Cyc}(X_1) \atop \pi\in {\rm Cyc}(X_2)}  |\hf\big|_{\sigma,\pi}|
\]
and
\[
|\hf'| = \beta\sum_{\sigma \in {\rm Cyc}(X_1)\atop\pi\in {\rm Cyc}(X_2)}  |\hf'\big|_{\sigma,\pi}|,
\]
where
\[
\alpha =\frac{1}{l!(n_1-l)!(k-l)!(n_2-k+l)!},\ \beta=\frac{1}{l'!(n_1-l')!(k-l')!(n_2-k+l')!},
\]
and ${\rm Cyc}(X_i)$ represents the set of all cyclic permutations on $X_i$ for each $i=1,2$.
Thus,
\[
|\hf|+|\hf'| =\sum_{\sigma \in {\rm Cyc}(X_1)\atop\pi\in {\rm Cyc}(X_2)} \left(\alpha|\hf\big|_{\sigma,\pi}|+\beta|\hf'\big|_{\sigma,\pi}|\right).
\]

\begin{claim}\label{claim2-1}
$\alpha >\beta$.
\end{claim}

\begin{proof}
For $i\in \{1,2,\ldots,k-1\}$, define
\[
f(i) =\frac{1}{i!(n_1-i)!(k-i)!(n_2-k+i)!}.
\]
Then for $1\leq i\leq k-2$, we have
\[
\frac{f(i)}{f(i+1)}=  \frac{(i+1) (n_2-k+i+1)}{(n_1-i)(k-i)} \geq \frac{n_2-k}{kn_1}.
\]
Since $n_2\geq 4kn_1$ implies $\frac{n_2-k}{kn_1}>1$, it follows that $f(i)>f(i+1)$.
Thus, $\alpha = f(l)>f(l+1)>\cdots>f(l')=\beta$.
\end{proof}

If $\hf\big|_{\sigma,\pi}\neq \emptyset$, by Claim \ref{claim-1} we have
\begin{align*}
\alpha|\hf\big|_{\sigma,\pi}|+\beta|\hf'\big|_{\sigma,\pi}| \leq \alpha|\hf\big|_{\sigma,\pi}|+\beta\left((l+l')n_2-|\hf\big|_{\sigma,\pi}|\right) = (\alpha-\beta)|\hf\big|_{\sigma,\pi}|+ \beta(l+l')n_2.
\end{align*}
Since Lemma \ref{cross-interval} implies $|\hf\big|_{\sigma,\pi}|\leq ln_2$, by Claim \ref{claim2-1}
\begin{align}\label{eq2-1}
\alpha|\hf\big|_{\sigma,\pi}|+\beta|\hf'\big|_{\sigma,\pi}|\leq (\alpha-\beta)ln_2+ \beta(l+l')n_2=\alpha ln_2+\beta l'n_2.
\end{align}

If $\hf\big|_{\sigma,\pi}= \emptyset$, then by Claim \ref{claim-new1} we have
\begin{align}\label{eq2-2}
\alpha|\hf\big|_{\sigma,\pi}|+\beta|\hf'\big|_{\sigma,\pi}| \leq 2\beta sl' n_2.
\end{align}
By \eqref{eq2-1} and \eqref{eq2-2},
\begin{align*}
\alpha|\hf\big|_{\sigma,\pi}|+\beta|\hf'\big|_{\sigma,\pi}|\leq  \max\left\{\alpha ln_2+\beta l'n_2, 2\beta sl' n_2\right\}.
\end{align*}
Consequently,
\begin{align*}
|\hf|+|\hf'| &\leq \sum_{\sigma \in {\rm Cyc}(X_1)\atop\pi\in {\rm Cyc}(X_2)} \max\left\{\alpha ln_2+\beta l'n_2, 2\beta sl' n_2\right\}\\
&= \max\left\{\binom{n_1-1}{l-1}\binom{n_2}{k-l}+\binom{n_1-1}{l'-1}\binom{n_2}{k-l'},2s\binom{n_1-1}{l'-1}\binom{n_2}{k-l'}\right\}.
\end{align*}
This completes the proof.
\end{proof}

\section{The extremal number for shifted hypergraphs}

In this section, we first determine $m^*(n,q,k,s)$ for $sk+1\leq q< sk+k-1$. Then using the recursion
\eqref{1-8}, we determine $m^*(n,q,k,s)$ for $s+k\leq q\leq sk$. Because of Theorems \ref{thm-f19} and \ref{specialcase-1}, in this section we always assume that $s\geq 2$ and $k\geq 3$.

\begin{lem}\label{lem3.1}
If $q=sk+r$ with $1\leq  r< k-1$, then for $n\geq 8k^2s$
\[
m^*(n,q,k,s)  =\binom{q}{k}+\sum_{i=r+1}^{k-1} \binom{q-1}{i-1}\binom{n-q}{k-i}.
\]
\end{lem}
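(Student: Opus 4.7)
The lower bound is given by the construction $\ha(n,q,k,s)$; I focus on the matching upper bound. Let $\hf$ be an extremal shifted family and partition it by the trace on $[q]$, writing $\hf_i := \{F \in \hf : |F \cap [q]| = i\}$ for $0 \leq i \leq k$. Since $\omega(\hf) = q$ and $\hf$ is shifted, $\binom{[q]}{k} \subset \hf$, hence $\hf_k = \binom{[q]}{k}$ contributes $\binom{q}{k}$. For $0 \leq i \leq r$, any $F \in \hf_i$ satisfies $|[q]\setminus F| = q - i \geq sk$; picking $s$ pairwise disjoint $k$-subsets of $[q]\setminus F$ from $\binom{[q]}{k}\subset \hf$ yields, together with $F$, a matching of size $s+1$ in $\hf$, contradicting $\nu(\hf)=s$. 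Hence $\hf_i=\emptyset$ for $i\leq r$.

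\textbf{Cross-intersecting pairs.} For $r+1 \leq i' \leq i \leq k-1$ with $i+i' \leq k+r$, the families $\hf_{i'}$ and $\hf_i$ are cross-intersecting: a disjoint pair $F'\in\hf_{i'}$, $F\in\hf_i$ would occupy only $i+i'\leq k+r=q-(s-1)k$ elements of $[q]$, leaving room for $s-1$ disjoint $k$-subsets from $\binom{[q]}{k}$ and yielding once more a matching of size $s+1$. In particular, $\hf_i$ is itself intersecting whenever $2i \leq k+r$. I match each $i\in[r+1,k-1]$ with its complement $i':=k+r-i$; this is an involution on $[r+1,k-1]$ with at most one fixed point $i_0=(k+r)/2$ (present exactly when $k+r$ is even).

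\textbf{Applying the bounds and summing.} For a pair $(i',i)$ with $i'<i$, we have $2i'<k+r$, so $\hf_{i'}$ is intersecting while $\nu(\hf_i)\leq s$; I apply Theorem \ref{cross-direct} with $l=i'$, $l'=i$, $n_1=q$, $n_2=n-q$, the hypotheses $n_2\geq 4kn_1$ and $n_1\geq l+l'$ following from $n\geq 8k^2s$ and $q\leq (s+1)k-1$. For the fixed point $i_0$ (if present), Theorem \ref{directEKR} applies directly to the intersecting family $\hf_{i_0}\subset\hh(q,n-q,k,i_0)$. Writing $U_j := \binom{q-1}{j-1}\binom{n-q}{k-j}$, the first term in the max of each theorem is the dominant one provided $U_{i'} \geq (2s-1)U_i$, which follows because $U_{i'}/U_i$ is bounded below by a product of $i-i'$ factors each of order $(n-q)/q \geq \Omega(ks)$; this yields the pairwise bounds $|\hf_{i'}|+|\hf_i|\leq U_{i'}+U_i$ and $|\hf_{i_0}|\leq U_{i_0}$. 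Summing over the pairing, each index in $[r+1,k-1]$ is counted exactly once, producing $\sum_{i=r+1}^{k-1}|\hf_i|\leq\sum_{i=r+1}^{k-1}U_i$, and combined with $|\hf_k|=\binom{q}{k}$ this matches $|\ha(n,q,k,s)|$. The main subtlety is that anchoring every $\hf_i$ to a single intersecting family (say $\hf_{r+1}$) via Theorem \ref{cross-direct} would double-count $|\hf_{r+1}|$ upon summing and overshoot the target by a factor of order $k-r$; the complementary pairing $i+i'=k+r$ is precisely what makes the sum tight, and verifying the uniform ratio estimate on $U_{i'}/U_i$ for $n \geq 8k^2 s$ is the main quantitative step.
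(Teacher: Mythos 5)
Your overall architecture coincides with the paper's: partition by the trace on $[q]$, note $\hf_i=\emptyset$ for $i\leq r$, pair each index $i$ with its complement $i'=k+r-i$, handle the possible fixed point with Theorem \ref{directEKR}, and bound each complementary pair via Theorem \ref{cross-direct}. The gap is in the quantitative step where you feed Theorem \ref{cross-direct} only the trivial bound $\nu(\hf_i)\leq s$. The second term of the max in that theorem is then $2s\,U_i$, so your pairwise bound $|\hf_{i'}|+|\hf_i|\leq U_{i'}+U_i$ requires $U_{i'}\geq (2s-1)U_i$, and this is simply false in the stated range $n\geq 8k^2s$ when $s$ is large relative to $k$. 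Concretely, take $k=4$, $r=1$ (so $q=4s+1$), and the only pair $(i',i)=(2,3)$: then
\[
\frac{U_2}{U_3}=\frac{(q-1)\binom{n-q}{2}}{\binom{q-1}{2}(n-q)}=\frac{n-q-1}{q-2},
\]
which for $n=8k^2s=128s$ is about $31$, while $2s-1>31$ as soon as $s\geq 17$. Your heuristic that each of the $i-i'$ factors in $U_{i'}/U_i$ is of order $(n-q)/q=\Omega(ks)$ is the source of the error: $(n-q)/q\approx 8k$, and after accounting for the binomials in $q$ the genuine per-factor gain is about $\tfrac{(n-q)\,i'}{kq}=O(k)$, independent of $s$. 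So with only $\nu(\hf_i)\leq s$ the domination step cannot be salvaged for $n\geq 8k^2s$; it would force a threshold growing like $s^2$.

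The missing idea, which is exactly the paper's Claim \ref{claim-5}, is a matching-number bound for $\hf_{i}$ that does not involve $s$ at all: if $\nu(\hf_{i})=t$, delete the $it$ vertices of such a matching from $[q]$; since $\binom{[q]}{k}\subset\hf$ and $q=sk+r$, at least $sk+r-it$ vertices remain, and avoiding a matching of size $s+1$ forces $(k-i)t\leq k-r-1$, i.e.\ $\nu(\hf_{i})\leq\frac{k-r-1}{k-i}\leq k-1$. Applying Theorem \ref{cross-direct} with this bound replaces the threatening term $2sU_i$ by $\frac{2(k-r-1)}{k-i}U_i\leq 2k\,U_i$, and now a constant per-factor gain of $2$ together with the factor $i-1$ (as in the paper's inequalities \eqref{eqn3-7}--\eqref{eqn3-9}) suffices to show the first term of the max dominates for $n\geq 8k^2s$. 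With that claim inserted, the rest of your argument (the complementary pairing to avoid double counting, and Theorem \ref{directEKR} at the midpoint, where $n\geq kq/(r+1)$ makes the first term the maximum) goes through as in the paper.
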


\begin{proof}
Let $\hf$ be a maximal shifted family with $\nu(\hf)= s$ and $\omega(\hf)= q$. Since $q=sk+r$ and $\binom{[q]}{k}\subset \hf$, we have $|F\cap [q]|\geq r+1$ for any $F\in \hf$. Let $l$ be an integer with $r+1\leq l\leq k$. Define the subfamily
\[
\hf_l =\{F\in \hf\colon |F\cap [q]|=l\}.
\]
Then
\[
|\hf| =\sum_{i=r+1}^{k} |\hf_i|.
\]

\begin{claim}\label{claim-3}
If $r+1\leq l\leq l'\leq k-1$ and $l+l'\leq k+r$, then $\hf_l$ and $\hf_{l'}$ are cross-intersecting.
\end{claim}

\begin{proof}
Suppose not, let $F_1\in \hf_l$ and $F_2\in \hf_{l'}$ be disjoint. Since $l+l'\leq k+r$ implies $q-l-l'\geq (s-1)k$, we can find $s-1$ pairwise disjoint sets $F_{3},\ldots,F_{s+1}\subset [q]\setminus (F_1\cup F_2)$ and this contradicts $\nu(\hf)=s$.
\end{proof}

\begin{claim}\label{claim-5}
For $r+1\leq l'\leq k-1$, $\nu(\hf_{l'})\leq  \frac{k-r-1}{k-l'}$.
\end{claim}

\begin{proof}
Suppose that we have $\nu(\hf_{l'})=t$. Removing the altogether $l't$ vertices of the $t$ edges forming a matching in $\hf_{l'}$, at least $sk+r-l't$ vertices in $[q]$ remain.  If this number is at least $(s+1-t)k$, we get a contradiction. Thus
\[
sk+r-l't\leq (s+1-t)k-1.
\]
Rearranging gives
\[
(k-l')t\leq k-r-1.
\]
Thus,
\[
t\leq \frac{k-r-1}{k-l'}.
\]
\end{proof}

For a pair $(l,l')$ with $r+1\leq l<l'\leq k-1$ and $l+l'=k+r$, by Claim \ref{claim-3} we see that $\hf_l$ is intersecting and $\hf_l$, $\hf_{l'}$ are cross-intersecting. From Claim \ref{claim-5}, we have $\nu(\hf_l')\leq \frac{k-r-1}{k-l'}$.
Note that $n\geq 8k^2s$ implies $n-q\geq 4k^2(s+1)\geq 4kq$. By Theorem \ref{cross-direct} we have
\begin{align}\label{eqnn3-1}
|\hf_l|+|\hf_{l'}|\leq \max\left\{\binom{q-1}{l-1}\binom{n-q}{k-l}+\binom{q-1}{l'-1}\binom{n-q}{k-l'},\frac{2 (k-r-1)}{k-l'} \binom{q-1}{l'-1}\binom{n-q}{k-l'}\right\}.
\end{align}

Note that
\[
\frac{\binom{n-q}{k-l}}{\binom{n-q}{k-l'}} =\frac{(n-q-k+l+1)\cdots (n-q-k+l')}{(k-l'+1)\cdots (k-l)}\geq \left(\frac{n-q-k+l+1}{k-l}\right)^{l'-l}
\]
and
\[
\frac{\binom{q-1}{l'-1}}{\binom{q-1}{l-1}} =\frac{(q-l'+1)\cdots (q-l)}{l\cdots (l'-1)}\leq \frac{(q-l)^{l'-l}}{l'-1}.
\]
It follows that
\begin{align*}
\binom{q-1}{l-1}\binom{n-q}{k-l}&\geq  \frac{l'-1}{(q-l)^{l'-l}} \cdot \left(\frac{n-q-k+l+1}{k-l}\right)^{l'-l} \binom{q-1}{l'-1}  \binom{n-q}{k-l'}\\[5pt]
&= (l'-1) \cdot \left(\frac{n-q-k+l+1}{(k-l)(q-l)}\right)^{l'-l} \binom{q-1}{l'-1}  \binom{n-q}{k-l'}.
\end{align*}
Since $n\geq 8k^2s$ implies that  $\frac{n-q-k+l+1}{(k-l)(q-l)}\geq 2$, we have
\begin{align}\label{eqn3-7}
\binom{q-1}{l-1}\binom{n-q}{k-l}&\geq (l'-1) 2^{l'-l} \binom{q-1}{l'-1}  \binom{n-q}{k-l'}\nonumber\\[5pt]
&= \frac{(k-l')(l'-1)}{k-2r-2+l'} \cdot  2\left(\frac{2 (k-r-1)}{k-l'}-1\right)\binom{q-1}{l'-1}  \binom{n-q}{k-l'}.
\end{align}
Since $r\geq 1$ and $\frac{k+r}{2}< l'\leq k-1$, we have $(k-l')(l'-1)\geq k-2$ and $k-2r-2+l'< 2k-4$. It follows that
\begin{align}\label{eqn3-8}
\frac{(k-l')(l'-1)}{k-2r-2+l'}>\frac{1}{2}.
\end{align}
Therefore, by \eqref{eqn3-7} and \eqref{eqn3-8}
\begin{align}\label{eqn3-9}
\binom{q-1}{l-1}\binom{n-q}{k-l}>\left(\frac{2 (k-r-1)}{k-l'}-1\right)\binom{q-1}{l'-1}  \binom{n-q}{k-l'}.
\end{align}
From \eqref{eqnn3-1} and \eqref{eqn3-9}, we get
\begin{align}\label{eqn3-15}
|\hf_l|+|\hf_{l'}|\leq \binom{q-1}{l-1}\binom{n-q}{k-l}+\binom{q-1}{l'-1}\binom{n-q}{k-l'}.
\end{align}

If $k+r$ is even and $2l=k+r$, since $\hf_l$ is intersecting, applying Theorem \ref{directEKR} gives
\[
|\hf_l| \leq \max \left\{\binom{q-1}{l-1}\binom{n-q}{k-l},\binom{q}{l}\binom{n-q-1}{k-l-1}\right\}.
\]
Since $\frac{q}{l}\leq \frac{n-q}{k-l}$ is equivalent to $kq \leq ln$, for $n\geq kq/(r+1)$, we have
\begin{align}\label{eqn3-16}
|\hf_l| \leq \binom{q-1}{l-1}\binom{n-q}{k-l}.
\end{align}
Therefore, by \eqref{eqn3-15} and \eqref{eqn3-16} we arrive at
\[
|\hf| = \binom{q}{k}+\sum_{i=r+1}^{k-1} |\hf_i| \leq  \binom{q}{k}+\sum_{i=r+1}^{k-1} \binom{q-1}{i-1}\binom{n-q}{k-i}.
\]
This completes the proof.
\end{proof}

Let us recall the following two results due to the first author and Huang, Loh, Sudakov, respectively.

\begin{lem}[\cite{F87}]\label{gbEMC}
Suppose that $\hf\subset \binom{[n]}{k}$, $n\geq k(s+1)$ and $\nu(\hf)\leq s$. Then
\[
|\hf| \leq s\binom{n-1}{k-1}.
\]
\end{lem}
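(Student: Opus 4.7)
The plan is to use shifting combined with induction on the pair $(k,n)$. Shifting preserves $|\hf|$ and does not increase $\nu(\hf)$, so we may assume $\hf$ is shifted. Split $|\hf|=|\hf(n)|+|\hf(\bar n)|$, where $\hf(n)\subset\binom{[n-1]}{k-1}$ is the link at the maximum element $n$ and $\hf(\bar n)\subset\binom{[n-1]}{k}$ collects the edges of $\hf$ avoiding $n$.

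For the link $\hf(n)$, the central shiftedness observation is: given any matching $\{G_1,\ldots,G_t\}\subset \hf(n)$, each $G_i\cup\{n\}\in\hf$ and $G_i\cup\{a\}\prec G_i\cup\{n\}$ for every $a\in[n-1]\setminus G_i$, so $G_i\cup\{a\}\in\hf$ by shiftedness. When $n\geq tk+1$, one can choose distinct $a_i\in[n-1]\setminus\bigcup_j G_j$ to assemble a matching $\{G_i\cup\{a_i\}\}$ of size $t$ inside $\hf$, forcing $t\leq s$. Since $n\geq k(s+1)\geq sk+1$, this yields $\nu(\hf(n))\leq s$, and the inductive hypothesis applied to the $(k-1)$-graph $\hf(n)$ on $n-1\geq(k-1)(s+1)$ vertices gives $|\hf(n)|\leq s\binom{n-2}{k-2}$.

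For $\hf(\bar n)\subset\binom{[n-1]}{k}$ one trivially has $\nu(\hf(\bar n))\leq s$; if moreover $n-1\geq k(s+1)$, induction on $n$ yields $|\hf(\bar n)|\leq s\binom{n-2}{k-1}$, and Pascal's identity closes the step:
\[
|\hf| \;\leq\; s\binom{n-2}{k-2}+s\binom{n-2}{k-1} \;=\; s\binom{n-1}{k-1}.
\]

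The main obstacle is the boundary case $n=k(s+1)$, where the inductive bound on $\hf(\bar n)$ can genuinely be violated: the extremal family $\hf=\binom{[(s+1)k-1]}{k}$ satisfies $|\hf(\bar n)|=\binom{(s+1)k-1}{k}>s\binom{(s+1)k-2}{k-1}$. Here one uses the identity $\binom{(s+1)k-1}{k}=s\binom{(s+1)k-1}{k-1}$ and argues the joint bound $|\hf(n)|+|\hf(\bar n)|\leq s\binom{n-1}{k-1}$ directly: a structural dichotomy via shiftedness shows that each edge of $\hf$ through $n$ forces the absence of a compensating edge from $\binom{[n-1]}{k}$ in $\hf$ (otherwise an $(s+1)$-matching would be produced by combining with a matching in $\binom{[n-1]}{k}$ covering the remaining elements), and this trade-off is precisely enough to close the boundary case.
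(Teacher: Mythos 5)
The paper never proves this lemma (it is quoted from \cite{F87}), so your argument has to stand on its own, and it does not: all of the content gets pushed into the base case $n=k(s+1)$, which you then wave away. Your inductive step is fine for $n>k(s+1)$ (modulo a small slip: to exclude a matching $G_1,\dots,G_{s+1}$ in $\hf(n)$ you need $s+1$ completion vertices, and $[n-1]\setminus(G_1\cup\dots\cup G_{s+1})$ has only $n-1-(s+1)(k-1)=s$ elements when $n=(s+1)k$; the fix is to keep $G_{s+1}\cup\{n\}$ itself as one edge, whereas the condition "$n\geq sk+1$" you invoke does not support the argument as you state it). The real problem is the boundary. At $n=(s+1)k$ one has $s\binom{n-1}{k-1}=\binom{n-1}{k}=\frac{s}{s+1}\binom{n}{k}$, so the lemma there says exactly: any $k$-graph occupying more than an $\frac{s}{s+1}$ fraction of $\binom{[n]}{k}$ contains a perfect matching. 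This is the tight case (attained by $\binom{[(s+1)k-1]}{k}$), i.e.\ precisely the Erd\H{o}s Matching Conjecture at $n=(s+1)k$, and it is the entire difficulty of the statement.

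Your sketch for this case is not a proof. The bound $|\hf(n)|+|\hf(\bar n)|\leq\binom{n-1}{k}$ amounts to an injection from $\hf(n)$ into $\binom{[n-1]}{k}\setminus\hf(\bar n)$. What shiftedness and $\nu(\hf)\leq s$ actually give is only this: for each $G\in\hf(n)$ and each partition of the $sk$-set $[n-1]\setminus G$ into $s$ blocks of size $k$, at least one block is missing from $\hf$. Different $G$ can force the very same missing block, and no canonical assignment is exhibited; "this trade-off is precisely enough" is an assertion, not an argument. Closing this case needs a genuine tool, e.g.\ averaging over all partitions of $[n]$ into $s+1$ blocks of size $k$ (a fixed $k$-set is a block with probability $(s+1)/\binom{n}{k}$, so $|\hf|>\frac{s}{s+1}\binom{n}{k}$ forces some partition all of whose blocks lie in $\hf$), or Baranyai's theorem. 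Note also that the classical route to the lemma — Katona's cycle method, as in \cite{F87}: a family of $k$-arcs on an $n$-cycle with no $s+1$ pairwise disjoint arcs has at most $sk$ members, and averaging over cyclic orders gives $|\hf|\leq sk\cdot\frac{1}{k}\binom{n-1}{k-1}$ — proves the bound in one stroke for all $n\geq(s+1)k$ and avoids any delicate boundary case; your reduction, by contrast, concentrates the whole lemma in the one case it leaves unproved.
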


\begin{lem}[\cite{HLS}]\label{rainbow}
Let $\mathcal{H}_1, \ldots, \mathcal{H}_s\subset {[n]\choose k}$ be families satisfying $|\mathcal{H}_i|>(s-1){n-1\choose k-1}$ for $i=1\ldots, s$, and $n\geq sk$. Then there exist $s$ pairwise disjoint sets $H_1\in\mathcal{H}_1, \ldots, H_s\in\mathcal{H}_s$.
\end{lem}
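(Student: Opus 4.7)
The plan is to proceed by induction on $s$. The base case $s=1$ is immediate: $|\mathcal{H}_1|>0$ supplies any $H_1\in \mathcal{H}_1$.

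For the inductive step, I would assume the result for $s-1$ and argue by contradiction, supposing no rainbow matching of size $s$ exists in $\mathcal{H}_1,\ldots,\mathcal{H}_s$. Applying the inductive hypothesis to any $s-1$ of the families (the lower bound $(s-1)\binom{n-1}{k-1}$ is already strong enough for the $(s-1)$-fold statement) produces a rainbow matching $\{H_1,\ldots,H_{s-1}\}$ with $H_i\in\mathcal{H}_i$. Writing $V=H_1\cup\cdots\cup H_{s-1}$, maximality then forces every $H\in\mathcal{H}_s$ to meet $V$.

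The crux is a \emph{swapping} step. For each $H\in\mathcal{H}_s$ and each $i\in[s-1]$, if there exists some $H_i'\in\mathcal{H}_i$ disjoint from $W_{i,H}:=(V\setminus H_i)\cup H$, then replacing $H_i$ by $H_i'$ and appending $H$ produces a rainbow matching of size $s$, a contradiction. Hence for every such pair $(H,i)$, \emph{all} members of $\mathcal{H}_i$ must meet $W_{i,H}$. Varying $H$ across $\mathcal{H}_s$, varying the initial choice of rainbow $(s-1)$-matching, and double-counting incidences between $\mathcal{H}_i$ and the vertices involved, one aims to force a contradiction with $|\mathcal{H}_i|>(s-1)\binom{n-1}{k-1}$.

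The main obstacle is the sharpness of the threshold. The quantity $(s-1)\binom{n-1}{k-1}$ is exactly the extremal bound of Lemma \ref{gbEMC} for families of matching number at most $s-1$, so each $\mathcal{H}_i$ already has matching number $\geq s$ on its own --- the content of the theorem is in coordinating these ``local'' matchings into a \emph{rainbow} one. Making the swap/double-counting quantitatively tight is the delicate part: a naive averaging bound of the form $\sum_{H\in\mathcal{H}_s}|\{H'\in\mathcal{H}_i:H\cap H'\neq\emptyset\}|\leq |\mathcal{H}_i|\cdot k\binom{n-1}{k-1}$ falls short by a factor close to $k/(s-1)$, so one would follow the Huang--Loh--Sudakov approach of first selecting the rainbow $(s-1)$-matching so as to minimize a weight function penalizing overlap with $\mathcal{H}_s$, and only then running the swap; the extremality of the chosen matching is what squeezes the counting to be sharp.
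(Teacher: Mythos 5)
There is no in-paper proof to compare against here: the paper quotes this lemma verbatim from \cite{HLS} and never proves it, so your argument has to stand on its own -- and as written it does not. The induction set-up and the swap observation are sound but only yield the following: for every $H\in\mathcal{H}_s$ and every $i\in[s-1]$, all members of $\mathcal{H}_i$ meet the $(s-1)k$-element set $W_{i,H}$. Counting $k$-sets that meet a fixed set of $(s-1)k$ vertices gives at most $\binom{n}{k}-\binom{n-(s-1)k}{k}\leq (s-1)k\binom{n-1}{k-1}$, which is a factor of roughly $k$ above the hypothesis $|\mathcal{H}_i|>(s-1)\binom{n-1}{k-1}$; you acknowledge this yourself. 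Closing that factor-$k$ gap is the entire content of the lemma, and the proposal does not do it: the ``weight function penalizing overlap with $\mathcal{H}_s$'' is never defined, the extremality of the chosen $(s-1)$-matching is never converted into a concrete inequality, and the final double count is not carried out. Note also that the obvious choices do not visibly work -- e.g.\ choosing the rainbow $(s-1)$-matching to minimize the number of edges of $\mathcal{H}_s$ meeting its vertex set says nothing directly about how many edges of a fixed $\mathcal{H}_i$ meet $W_{i,H}$, and a per-edge averaging over the $s$ sets of a matching inside $\mathcal{H}_s$ (each edge of $\mathcal{H}_i$ meets at most $k$ of them) still leaves an excess of order $k$. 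Deferring at exactly this point to ``the Huang--Loh--Sudakov approach'' is circular: that missing argument is precisely what a proof of the lemma must supply, and you give no evidence that the minimizing-weight variant you gesture at actually makes the count close. So the proposal is a plausible opening, but the decisive quantitative step is absent; either reproduce the argument of \cite{HLS} (or another complete argument reaching the threshold $(s-1)\binom{n-1}{k-1}$ for all $n\geq sk$), or treat the lemma as a black-box citation as the paper does.
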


\begin{lem}\label{reducible}
Let $\hf\subset {[n]\choose k}$ be a shifted family satisfying $\nu(\hf)=s$, $\omega(\hf)=q$ and $|\hf|=m^*(n,q,k,s)$. If $s+k\leq q\leq sk$ and $n\geq 8 k^2s$, then $\hf$ is reducible.
\end{lem}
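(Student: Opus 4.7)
The plan is to argue by contradiction. Suppose that $\hf$ is not reducible, i.e., $\nu(\hf(\bar{1}))=s$. Since $\hf$ is shifted, I can pick a matching $M_1,\ldots,M_s$ in $\hf(\bar{1})$ with $M_i=[(i-1)k+2,\,ik+1]$, so that $\bigcup_i M_i=[2,sk+1]$. Every $F\in\hf$ must meet $[2,sk+1]$; otherwise $\{F,M_1,\ldots,M_s\}$ would be an $(s+1)$-matching in $\hf$, contradicting $\nu(\hf)=s$. In particular, if $F\in\hf$ contains $1$, then $F\setminus\{1\}$ still meets $[2,sk+1]$, giving $|\hf(1)|\leq\binom{n-1}{k-1}-\binom{n-sk-1}{k-1}$.

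The next step is to bound $|\hf(\bar{1})|$ sharply. The family $\hf(\bar{1})\subset\binom{[2,n]}{k}$ is shifted, has matching number exactly $s$, and contains the clique $\binom{[2,q]}{k}$ since $\binom{[q]}{k}\subset\hf$. I would partition $\hf(\bar{1})$ according to $l=|F\cap[2,q]|$ into layers $\hf_l$, and for pairs $(l,l')$ with $l+l'$ small enough that the two layers must be cross-intersecting (by a matching-extension argument parallel to Claim \ref{claim-3}, combining disjoint representatives with the fixed matching $M_1,\ldots,M_s$ available outside the clique), apply Theorem \ref{cross-direct} to bound $|\hf_l|+|\hf_{l'}|$, viewing each layer as a subfamily of the direct product $\binom{[2,q]}{l}\times\binom{[q+1,n]}{k-l}$. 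For unpaired or ``central'' layers, use Lemma \ref{gbEMC} or a single-family intersecting bound, and rule out many simultaneously-large layers via Lemma \ref{rainbow}, which would otherwise produce a rainbow matching of size $s+1$.

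Summing these layer bounds with the bound on $|\hf(1)|$ produces an upper bound $|\hf|\leq B$, and the objective is to verify $B<|\ha(n,q,k,s)|$, which contradicts the maximality $|\hf|=m^*(n,q,k,s)\geq|\ha(n,q,k,s)|$. The main obstacle is this final quantitative comparison: the naive saving $\binom{n-sk-1}{k-1}$ in $|\hf(1)|$ is small compared to $\binom{n-1}{k-1}$, so the strict improvement must come from the cross-intersection pairings inside $\hf(\bar{1})$, where non-reducibility furnishes the matching $M_1,\ldots,M_s$ used to trigger cross-intersection across more layer pairs than would otherwise be available. The hypothesis $n\geq 8k^2s$ plays the same role here as in Lemma \ref{lem3.1}: it guarantees that the direct-product bound of Theorem \ref{cross-direct} dominates its $2s\binom{n_1-1}{l'-1}\binom{n_2}{k-l'}$ alternative, so that the aggregate bound falls strictly below the main term $\binom{n}{k}-\binom{n-p}{k}$ of $|\ha(n,q,k,s)|$ for the relevant $p$.
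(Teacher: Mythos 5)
There is a genuine gap, and it sits exactly where you flag ``the main obstacle.'' Your opening moves match the paper's (argue by contradiction, normalize the $s$-matching of $\hf(\bar 1)$ to $[2,sk+1]$ by shiftedness), but the engine you propose for the rest --- layering $\hf(\bar 1)$ by $|F\cap[2,q]|$ and applying Theorem \ref{cross-direct} to pairs of layers ``parallel to Claim \ref{claim-3}'' --- does not work in the range $s+k\leq q\leq sk$. The cross-intersecting property of two layers $\hf_l,\hf_{l'}$ in Claim \ref{claim-3} comes from the inequality $q-l-l'\geq (s-1)k$, i.e. $l+l'\leq k+r$ with $q=sk+r$, $r\geq 1$; for $q\leq sk$ this forces $l+l'\leq k-(sk-q)$, which is vacuous or even negative for most of the range (e.g. $q$ near $s+k$), and the matching $M_1,\ldots,M_s$ does not rescue it: two disjoint edges from low layers may meet the $M_i$ arbitrarily, so nothing forces an $(s+1)$-matching. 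Indeed low layers here are genuinely not intersecting --- the paper's Claim \ref{claim-4} only gives $\nu(\hf_i)<\frac{(k-1)(p+1)}{k-i}$ --- so Theorem \ref{cross-direct} (which needs one family intersecting and the pair cross-intersecting) is simply inapplicable, and with it your hoped-for improvement evaporates. Your other quantitative input, $|\hf(1)|\leq\binom{n-1}{k-1}-\binom{n-sk-1}{k-1}$, is, as you admit, far too weak: combined with any generic bound on $|\hf(\bar 1)|$ (which can have matching number $s$, clique number $q-1$, and size of order $\binom{n-2}{k-1}$) it lands well above $|\ha(n,q,k,s)|$, so no contradiction is reached. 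The comparison you defer is not a routine verification; it is the entire content of the lemma.

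The paper extracts much more from non-reducibility, and in a different place: it works with the layers $\hf_i=\{F\in\hf:|F\cap[q]|=i\}$ of $\hf$ itself and uses the matching $T'=[2,sk+1]$ only in Claim \ref{claim3-1}, to show that every $F$ with $F\cap[q]=\{1\}$ must meet the \emph{small} set $[q+1,sk+1]$ of size $sk+1-q$; together with shiftedness and $\nu(\hf_1)\leq p$ this gives $|\hf_1|\leq\frac18 p\binom{n-p}{k-1}$, which is the decisive saving (in the extremal family these sets number about $p\binom{n-q}{k-1}$). The intermediate layers $\hf_2,\ldots,\hf_{k-1}$ are then handled not by cross-intersection but by degree counting: high-degree shadows on $[q]$ have bounded matching number via Lemma \ref{rainbow} and Claim \ref{claim-4}, and are counted with Erd\H{o}s--Gallai and Lemma \ref{gbEMC}, yielding $|\hf_i|\leq 2^{-i}p\binom{n-p}{k-1}$ (roughly), while $|\hf_k|=\binom{q}{k}$ is compared directly with $\binom{q-p}{k}+\frac18 p\binom{n-p}{k-1}$. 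Summing gives $|\hf|<p\binom{n-p}{k-1}+\binom{q-p}{k}\leq m^*(n,q,k,s)$ --- note the contradiction is against this clean lower bound, not against $|\ha(n,q,k,s)|$ itself. To repair your proposal you would need to replace the cross-intersecting machinery by an argument of this degree/matching-counting type and actually carry out the final comparison.
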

\begin{proof}
Suppose for contradiction that $\hf$ is not reducible. We shall show that
\[
|\hf| <p\binom{n-p}{k-1}+\binom{q-p}{k}\leq m^*(n,q,k,s),
\]
which contradicts the condition that  $|\hf|=m^*(n,q,k,s)$. Recall that for $1\leq i\leq k$,
\[
\hf_i =\{F\in \hf\colon |F\cap [q]|=i\}.
\]
Then
\[
|\hf| =\sum_{i=1}^{k} |\hf_i|.
\]

Note that $p$ is the integer satisfying $(s-p)k+p+1\leq q\leq (s-p)k+p+k-1$. Since $q\geq s+k$, we have $p<s$. It follows that
\begin{align}\label{eq3-10}
q\geq p+k-1.
\end{align}

\begin{claim}\label{claim-4}
For $i=1,\ldots,k-1$, $\nu(\hf_i)<\frac{(k-1)(p+1)}{k-i}$.
\end{claim}

\begin{proof}
Suppose that we have $\nu(\hf_i)=t$. Removing the altogether $it$ vertices of the $t$ edges forming a matching in $\hf_i$, at least $q-it\geq (s-p)k+p+1-it$ vertices in $[q]$ remain.  If this number is at least $(s+1-t)k$, we get a contradiction. Thus
\[
(s-p)k+p+1-it\leq (s+1-t)k-1.
\]
Rearranging gives
\[
(k-i)t\leq (k+1)(p+1)-1.
\]
Therefore,
\[
t\leq \frac{k-1}{k-i}(p+1) -\frac{1}{k-i}<\frac{k-1}{k-i}(p+1).
\]
\end{proof}

\begin{claim}\label{claim3-1}
\begin{align}\label{eqn3-1}
|\hf_1| \leq \frac{1}{8} p\binom{n-p}{k-1}.
\end{align}
\end{claim}

\begin{proof}
Since $\hf$ is not reducible, there are $s$ pairwise disjoint sets $F_1',\ldots,F_s'$ in $\hf(\bar{1})$ (This is the only  place using the assumption that $\hf$ is not reducible). Set $T'=F_1'\cup\ldots \cup F_s'$. By shiftedness, we may assume that $T'=[2,sk+1]$. Recall that
\[
\hf_1(1)=\{F\setminus\{1\}\colon F\in \hf,\ F\cap [q]=\{1\}\}.
\]
Since each member in $\hf_1(1)$ intersects $T'\setminus [q]$, we have
\begin{align}\label{eq3-1}
|\hf_1(1)|\leq (sk+1-q)\binom{n-q-1}{k-2}.
\end{align}

By Claim \ref{claim-4}, we have $\nu(\hf_1)\leq p$. By shiftedness, it follows that $\nu(\hf_1(p+1))\leq p$. Note that $n-q\geq (p+1)(k-1)$. Then Lemma \ref{gbEMC} implies that
\begin{align}\label{eq3-4}
|\hf_1(p+1)| \leq p\binom{n-q-1}{k-2}.
\end{align}

By shiftedness, we have
\[
|\hf_1| \leq \sum_{i=1}^q |\hf_1(i)|\leq p |\hf_1(1)|+ (q-p)|\hf_1(p+1)|.
\]
By \eqref{eq3-1} and \eqref{eq3-4}, it follows that
\begin{align}\label{eqn3-10}
|\hf_1| &\leq p (sk+1-q)\binom{n-q-1}{k-2} + (q-p)p{n-q-1\choose k-2}\nonumber\\[5pt]
& =p (sk+1-p)\binom{n-q-1}{k-2}.
\end{align}
Since $q\geq p$, $p\geq 1$ and $n\geq 8k^2s$, from \eqref{eqn3-10} we have
\begin{align*}
|\hf_1| \leq p sk\binom{n-p-1}{k-2} =  \frac{ 8sk(k-1)}{n-p}\cdot\frac{1}{8} p\binom{n-p}{k-1}\leq  \frac{1}{8} p\binom{n-p}{k-1}.
\end{align*}
\end{proof}

\begin{claim}\label{claim3-2}
\begin{align}\label{eqn3-2}
|\hf_2| \leq  \frac{1}{2} p\binom{n-p}{k-1}.
\end{align}
\end{claim}

\begin{proof}
Note that
\begin{align*}
|\hf_2| = \sum_{P\in \binom{[q]}{2}} |\hf_2(P)|.
\end{align*}
Let $h=\left\lceil\frac{(k-1)(p+1)}{k-2}\right\rceil-1$. Define a graph $\hg \subset \binom{[q]}{2}$ in the following way. The pair $P$ is an edge of $\hg$
iff
\[
|\hf_2(P)|>h\binom{n-q-1}{k-3}.
\]
We claim that $\nu(\hg)\leq h$. Otherwise choose a matching $P_1,\ldots,P_{h+1}$ in $\hg$. Applying Lemma \ref{rainbow}, we can extend it to a matching of size $h+1$ contradicting Claim \ref{claim-4}.

If $q\geq 2h+2$, by Erd\H{o}s matching conjecture for $k=2$ (Erd\H{o}s-Gallai Theorem, \cite{EG})
\[
|\hg| \leq \max\left\{\binom{2h+1}{2},\binom{q}{2}-\binom{q-h}{2}\right\}\leq qh.
\]
If $q\leq 2h+1$, then
\[
|\hg|\leq \binom{q}{2} \leq qh.
\]
Thus
\begin{align}\label{eqn3-11}
|\hg| \leq qh.
\end{align}
Now
\begin{align*}
|\hf_2| &\leq |\hg|\binom{n-q}{k-2} +\left(\binom{q}{2}-|\hg|\right)h\binom{n-q-1}{k-3}\\[5pt]
&=\binom{q}{2}h\binom{n-q-1}{k-3}+\left(\binom{n-q}{k-2}-h\binom{n-q-1}{k-3}\right)|\hg|.
\end{align*}
Since $n > q+(k-1)(p+1)\geq q+(k-2)h$ implies
\[
\binom{n-q}{k-2}-h\binom{n-q-1}{k-3}>0,
\]
from \eqref{eqn3-11} it follows that
\begin{align*}
|\hf_2| &\leq \binom{q}{2}h\binom{n-q-1}{k-3}+ qh\binom{n-q}{k-2}-qh^2\binom{n-q-1}{k-3} \\[5pt]
&\leq \binom{q}{2}h\binom{n-q-1}{k-3}+ qh\binom{n-q}{k-2}\\[5pt]
&\leq h\left(q+\frac{q^2}{2}\cdot \frac{k-2}{n-q} \right)\binom{n-q}{k-2}.
\end{align*}
Since $q\geq p+1$ and $h=\left\lceil\frac{(k-1)(p+1)}{k-2}\right\rceil-1\leq 2p+1$, we have
\begin{align}
|\hf_2| &\leq (2p+1)\left(q+\frac{q^2}{2}\cdot \frac{k-2}{n-q} \right)\binom{n-p-1}{k-2}\\[5pt]
&\leq 3p\left(q+\frac{q^2}{2}\cdot \frac{k-2}{n-q} \right)\cdot\frac{k-1}{n-p}\cdot\binom{n-p}{k-1}\\[5pt]
& =\frac{p}{2}\left(6q \cdot \frac{k-1}{n-p}+3q^2\cdot \frac{(k-1)(k-2)}{(n-p)(n-q)} \right)\binom{n-p}{k-1}.
\end{align}
Since $n\geq 8k^2s$, we have
\[
6q \cdot \frac{k-1}{n-p} \leq \frac{15}{16} \mbox{ and }\ 3q^2\cdot \frac{(k-1)(k-2)}{(n-p)(n-q)} \leq \frac{1}{16}.
\]
Thus \eqref{eqn3-2} follows.
\end{proof}

\begin{claim}\label{claim3-3}
For $i=3,\ldots,k-1$,
\begin{align}\label{eqn3-3}
|\hf_i| \leq \frac{1}{2^i} p\binom{n-p}{k-1}.
\end{align}
\end{claim}

\begin{proof}
 Note that
\begin{align*}
|\hf_i| = \sum_{T\in \binom{[q]}{i}} |\hf_i(T)|.
\end{align*}
Let $h=\left\lceil\frac{(k-1)(p+1)}{k-i}\right\rceil-1$. Define an $i$-graph $\hh \subset \binom{[q]}{i}$ in the following way. A set $T\in \binom{[q]}{i}$ is an edge of $\hh$
iff
\[
|\hf_i(T)|>h\binom{n-q-1}{k-i-1}.
\]
We claim that $\nu(\hh)\leq h$. Otherwise choose a matching $T_1,\ldots,T_{h+1}$ in $\hh$. Applying Lemma \ref{rainbow}, we can extend it to a matching of size $h+1$ contradicting Claim \ref{claim-4}.

If $q\geq (h+1)i$, by Lemma \ref{gbEMC}
\[
|\hh| \leq h \binom{q-1}{i-1}.
\]
If $q< (h+1)i$, then
\[
|\hh|\leq \binom{q}{i} \leq  (h+1) \binom{q-1}{i-1}\leq 2h \binom{q-1}{i-1}.
\]
Thus,
\begin{align}\label{eqn3-12}
|\hh|\leq 2h \binom{q-1}{i-1}.
\end{align}
Now
\begin{align*}
|\hf_i| &\leq |\hh|\binom{n-q}{k-i} +\left(\binom{q}{i}-|\hh|\right)h\binom{n-q-1}{k-i-1}\\[5pt]
&=\binom{q}{i}h\binom{n-q-1}{k-i-1}+\left(\binom{n-q}{k-i}-h\binom{n-q-1}{k-i-1}\right)|\hh|.
\end{align*}
Since $n > q+(k-1)(p+1)\geq q+(k-i)h$ implies
\[
\binom{n-q}{k-i}-h\binom{n-q-1}{k-i-1}>0,
\]
from \eqref{eqn3-12} it follows that
\begin{align}\label{eqn3-14}
|\hf_i| &\leq \binom{q}{i}h\binom{n-q-1}{k-i-1}+\left(\binom{n-q}{k-i}-h\binom{n-q-1}{k-i-1}\right)2h\binom{q-1}{i-1}\nonumber\\[5pt]
&\leq h\binom{q}{i}\binom{n-q-1}{k-i-1}+2h\binom{q-1}{i-1}\binom{n-q}{k-i}.
\end{align}
Note that
\begin{align}\label{eqnn3-2}
h\binom{q}{i}\binom{n-q-1}{k-i-1}&\leq h \frac{q^i}{i!} \frac{(k-i)\cdots (k-1)}{(n-q)\cdots(n-q-1+i)} \binom{n-q-1+i}{k-1}\nonumber\\[5pt]
&\leq \frac{h}{i!} \left(\frac{q(k-1)}{n-q}\right)^i \binom{n-q-1+i}{k-1}
\end{align}
and
\begin{align}\label{eqn3-4}
2h\binom{q-1}{i-1}\binom{n-q}{k-i}&\leq 2h \frac{(q-1)^{i-1}}{(i-1)!}\frac{(k-i+1)\cdots (k-1)}{(n-q+1)\cdots(n-q-1+i)} \binom{n-q-1+i}{k-1}\nonumber\\[5pt]
&\leq  \frac{2h}{(i-1)!}\left(\frac{q(k-1)}{n-q}\right)^{i-1} \binom{n-q-1+i}{k-1}.
\end{align}
Since $n\geq k^2s$ implies $\frac{q(k-1)}{n-q}<1$, from \eqref{eqnn3-2} we have
\begin{align}\label{eqn3-5}
h\binom{q}{i}\binom{n-q-1}{k-i-1} \leq \frac{h}{i!} \left(\frac{q(k-1)}{n-q}\right)^{i-1} \binom{n-q-1+i}{k-1}.
\end{align}
By \eqref{eqn3-4} and \eqref{eqn3-5}, we get
\begin{align*}
|\hf_i| \leq \left(2+\frac{1}{i}\right)\frac{h}{(i-1)!} \left(\frac{q(k-1)}{n-q}\right)^{i-1} \binom{n-q-1+i}{k-1}.
\end{align*}
Recall that $q\geq p+k-1 \geq p+i-1$, we have
\begin{align}\label{eqn3-13}
|\hf_i| \leq \left(2+\frac{1}{i}\right)\frac{h}{(i-1)!} \left(\frac{q(k-1)}{n-q}\right)^{i-1} \binom{n-p}{k-1}.
\end{align}
Since  $h<\frac{(k-1)(p+1)}{k-i}$, we have
\begin{align*}
|\hf_i|&\leq \frac{(2i+1)(k-1)(p+1)}{i(k-i)(i-1)!} \left(\frac{q(k-1)}{n-q}\right)^{i-1} \binom{n-p}{k-1}\\[5pt]
&= \frac{2i+1}{(i-1)!}\cdot\frac{(k-1)(p+1)}{i(k-i)} \left(\frac{q(k-1)}{n-q}\right)^{i-1} \binom{n-p}{k-1}.
\end{align*}
Since $3\leq i\leq k-1$, we have $i(k-i)\geq k-1$. Moreover,
\[
\frac{2i+1}{(i-1)!} \leq \frac{2i+1}{i-1}=2+\frac{3}{i-1}\leq \frac{7}{2}.
\]
 It follows that
\begin{align*}
|\hf_i|&\leq \frac{7}{2}\cdot(p+1) \left(\frac{q(k-1)}{n-q}\right)^{i-1} \binom{n-p}{k-1}\\[5pt]
&\leq 7p \left(\frac{q(k-1)}{n-q}\right)^{i-1} \binom{n-p}{k-1}.
\end{align*}
As $i\geq 3$, it follows that
\begin{align*}
|\hf_i|\leq \frac{1}{2}p\left(\frac{\sqrt{14}q(k-1)}{n-q}\right)^{i-1} \binom{n-p}{k-1}= \frac{1}{2^i} p\left(\frac{2\sqrt{14}q(k-1)}{n-q}\right)^{i-1}\binom{n-p}{k-1}.
\end{align*}
Moreover, $n\geq 8k^2s$ implies $\frac{2\sqrt{14}q(k-1)}{n-q}<1$. Thus \eqref{eqn3-3} follows.
\end{proof}

We also claim that
\begin{align}\label{eqn3-6}
|\hf_k| \leq  \frac{1}{8} p \binom{n-p}{k-1}+\binom{q-p}{k}.
\end{align}
Since $n\geq 8k^2s > 9q$ and $q\geq p+k-1$ implies $n-p-k+1\geq n-q>8q$, it follows that
\begin{align*}
\binom{q-1}{k-1} - \frac{1}{8}  \binom{n-p}{k-1} &\leq \frac{q^{k-1}}{(k-1)!}-\frac{1}{8} \cdot \frac{(n-p-k+1)^{k-1}}{(k-1)!}\\
&\leq \frac{1}{8(k-1)!} \left(8q^{k-1}-(n-p-k+1)^{k-1}\right)\\
&<0.
\end{align*}
Thus,
\begin{align*}
|\hf_k| =\binom{q}{k} &= \frac{1}{8} p \binom{n-p}{k-1}+\binom{q-p}{k}+\left(\binom{q}{k}-\binom{q-p}{k}-\frac{1}{8} p \binom{n-p}{k-1}\right)\\[5pt]
&\leq  \frac{1}{8} p \binom{n-p}{k-1}+\binom{q-p}{k}+\left(p\binom{q-1}{k-1}-\frac{1}{8} p \binom{n-p}{k-1}\right)\\
&<  \frac{1}{8} p \binom{n-p}{k-1}+\binom{q-p}{k}.
\end{align*}

Consequently, by \eqref{eqn3-1}, \eqref{eqn3-2}, \eqref{eqn3-3} and \eqref{eqn3-6} we have
\begin{align*}
|\hf| = \sum_{i=1}^k |\hf_i|
&\leq  \frac{1}{8} p\binom{n-p}{k-1} + \frac{1}{2} p\binom{n-p}{k-1}  +\sum_{i=3}^{k-1}\frac{1}{2^i} p\binom{n-p}{k-1}+\frac{1}{8} p \binom{n-p}{k-1}+\binom{q-p}{k}\\
&= \left(\sum_{i=1}^{k-1}\frac{1}{2^i}\right) p\binom{n-p}{k-1} +\binom{q-p}{k}\\
&< p \binom{n-p}{k-1} +\binom{q-p}{k}\\
&\leq m^*(n,q,k,s).
\end{align*}
This contradicts the assumption that $|\hf|= m^*(n,q,k,s)$. Thus $\hf$ is reducible.
\end{proof}

Now we are ready to prove Theorem \ref{main-1}.
\begin{proof}[Proof of Theorem \ref{main-1}]
By Lemma \ref{lem3.1} and Proposition \ref{prop-2}, the theorem holds for $sk+1\leq q\leq  sk+k-1$. Hence we may assume that $s+k\leq q\leq sk$. Let $p$ be the integer satisfying $(s-p)k+p+1\leq q\leq (s-p)k+p+k-1$.
 By Lemma \ref{reducible}, we see that $\hf,\hf(\bar{1}),\ldots,\hf(\overline{[p-1]})$ are all reducible. Then apply Lemma \ref{recur} $p$ times, we obtain that
\[
m^*(n,q,k,s) = m^*(n-p,q-p,k,s-p) +\sum_{i=1}^p\binom{n-i}{k-1}.
\]
Assume that $q=(s-p)k+p+r$ with $1\leq r\leq k-1$, then $q-p =(s-p)k+r$. By Lemma \ref{lem3.1} for $r<k-1$ and Proposition \ref{prop-2} for $r=k-1$, we have
\[
m^*(n-p,q-p,k,s-p) = \binom{q-p}{k}+\sum_{i=r+1}^{k-1} \binom{q-p-1}{i-1}\binom{n-q}{k-i}.
\]
Thus,
\begin{align*}
m^*(n,q,k,s) &= \binom{q-p}{k}+\sum_{i=r+1}^{k-1} \binom{q-p-1}{i-1}\binom{n-q}{k-i}+\sum_{i=1}^p\binom{n-i}{k-1}\\
&= \binom{n}{k}-\binom{n-p}{k}+\binom{q-p}{k}+\sum_{i=r+1}^{k-1} \binom{q-p-1}{i-1}\binom{n-q}{k-i}.
\end{align*}
This completes the proof.
\end{proof}
\section{The extremal number for general hypergraphs}

Recall that shifting does not increase the matching number and does not decrease the clique number, and $m(n,q,k,s)$ is the maximum value of $m^*(n,t,k,s)$ over all $t\geq q$. In this section, we show that $m^*(n,t,k,s) \geq m^*(n,t+1,k,s)$, which leads to Theorem \ref{main-2}.
\begin{prop}\label{prop-3}
For $s+k-1\leq q\leq sk+k-2$ and $n\geq 2q$,
\[
|\ha(n,q,k,s)| \geq |\ha(n,q+1,k,s)|.
\]
\end{prop}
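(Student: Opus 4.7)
The plan is to compute the difference $|\ha(n,q,k,s)| - |\ha(n,q+1,k,s)|$ directly from the explicit size formula
\[|\ha(n,q,k,s)| = \binom{n}{k} - \binom{n-p}{k} + \binom{q-p}{k} + \sum_{i=r+1}^{k-1}\binom{q-p-1}{i-1}\binom{n-q}{k-i}\]
stated just before Theorem \ref{main-1}. Here $(p,r) = (p(q), r(q))$ is determined by writing $q = (s-p)k+p+r$ with $1 \leq r \leq k-1$ and $0 \leq p \leq s$; the first case of the definition of $\ha$ corresponds to $p=0$, and the special case $q = s+k-1$ to $(p,r) = (s, k-1)$. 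As $q$ grows by one, exactly two scenarios are possible: either $r \leq k-2$, in which case $(p,r)$ shifts to $(p, r+1)$ (Case A), or $r = k-1$, which forces $p \geq 1$ since $q \leq sk+k-2$, and then $(p,r)$ shifts to $(p-1, 1)$ (Case B).

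In Case A I would apply Pascal's rule to both binomial factors appearing in the two sums, so that they telescope against each other. After cancellations, and after invoking $\binom{q-p+1}{k} - \binom{q-p}{k} = \binom{q-p}{k-1}$ together with one further Pascal step on $\binom{q-p}{k-1}$, the only surviving contributions are the endpoint terms at $i = r+1$ and $i = k-1$, yielding
\[|\ha(n,q,k,s)| - |\ha(n,q+1,k,s)| = \binom{q-p-1}{r}\binom{n-q-1}{k-r-1} - \binom{q-p-1}{k-1}.\]
Non-negativity then follows from the trinomial identity $\binom{q-p-1}{k-1}\binom{k-1}{r} = \binom{q-p-1}{r}\binom{q-p-1-r}{k-1-r}$; after cancelling the common factor $\binom{q-p-1}{r}$ it suffices to show $\binom{k-1}{r}\binom{n-q-1}{k-1-r} \geq \binom{q-p-1-r}{k-1-r}$, which is immediate from $n-q-1 \geq q-1 \geq q-p-1-r$, guaranteed by $n \geq 2q$.

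Case B is cleaner because the sum defining $|\ha(n,q,k,s)|$ is empty (its index range $r+1=k$ to $k-1$ is vacuous). A short direct calculation using $\binom{n-p+1}{k} - \binom{n-p}{k} = \binom{n-p}{k-1}$ together with the Vandermonde expansion $\binom{n-p}{k-1} = \sum_{j=0}^{k-1}\binom{q-p+1}{j}\binom{n-q-1}{k-1-j}$ absorbs the sum appearing in $|\ha(n,q+1,k,s)|$, and the difference collapses to
\[|\ha(n,q,k,s)| - |\ha(n,q+1,k,s)| = \binom{n-q-1}{k-1} - \binom{q-p}{k-1}.\]
Since $p \geq 1$ in Case B and $n \geq 2q$ give $n-q-1 \geq q-1 \geq q-p$, the right-hand side is non-negative.

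The only delicate step is the telescoping in Case A: the two sums have slightly different index ranges ($[r+1, k-1]$ versus $[r+2, k-1]$), so some care is required to identify exactly which boundary terms survive the cancellation. Once this bookkeeping is carried out, the assumption $n \geq 2q$ leaves ample room in the remaining inequality in either case.
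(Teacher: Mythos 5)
Your proof is correct and takes essentially the same route as the paper: the same explicit size formula, the same case split according to whether $r\leq k-2$ (same $p$, $r\mapsto r+1$) or $r=k-1$ (forcing $p\geq 1$ and $p\mapsto p-1$), the same Pascal telescoping giving the identity $\binom{q-p-1}{r}\binom{n-q-1}{k-r-1}-\binom{q-p-1}{k-1}$ in the first case, and a Vandermonde collapse of the sum in the second. Your Case B expression $\binom{n-q-1}{k-1}-\binom{q-p}{k-1}$ is in fact a slightly cleaner (and correct) form of the paper's corresponding computation, which contains a minor algebraic slip at that step but reaches the same conclusion.
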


\begin{proof}
Let $p$ be the integer satisfying $(s-p)k+p+1\leq q\leq (s-p)k+p+k-1$ and  $r=q-p-(s-p)k$.  If $r\leq k-2$, then
\[
|\ha(n,q,k,s)| = \binom{n}{k}-\binom{n-p}{k}+\binom{q-p}{k}+\sum_{i=r+1}^{k-1} \binom{q-p-1}{i-1}\binom{n-q}{k-i}.
\]
and
\[
|\ha(n,q+1,k,s)| =\binom{n}{k}-\binom{n-p}{k}+\binom{q+1-p}{k}+\sum_{i=r+2}^{k-1} \binom{q-p}{i-1}\binom{n-q-1}{k-i}.
\]

Since
\begin{align*}
 \sum_{i=r+1}^{k-1} \binom{q-p-1}{i-1}\binom{n-q}{k-i}= \sum_{i=r+1}^{k-1}\left( \binom{q-p-1}{i-1}\binom{n-q-1}{k-i}+\binom{q-p-1}{i-1}\binom{n-q-1}{k-i-1}\right)
\end{align*}
and
\begin{align*}
\sum_{i=r+2}^{k-1} \binom{q-p}{i-1}\binom{n-q-1}{k-i}= \sum_{i=r+2}^{k-1}\left( \binom{q-p-1}{i-1}\binom{n-q-1}{k-i}+\binom{q-p-1}{i-2}\binom{n-q-1}{k-i}\right),
\end{align*}
it follows that
\begin{align*}
&|\ha(n,q,k,s)|-|\ha(n,q+1,k,s)| \\[5pt]
=&-\binom{q-p}{k-1}+\binom{q-p-1}{r}\binom{n-q-1}{k-r-1}+\binom{q-p-1}{k-2}\\[5pt]
=&\binom{q-p-1}{r}\binom{n-q-1}{k-r-1}-\binom{q-p-1}{k-1}.
\end{align*}
Since $n\geq 2q$, it follows that
\[
|\ha(n,q,k,s)|-|\ha(n,q+1,k,s)| \geq \binom{q-p-1}{r}\binom{q-p-1}{k-r-1}-\binom{q-p-1}{k-1}> 0.
\]

 If $r= k-1$, then
\[
|\ha(n,q,k,s)| = \binom{n}{k}-\binom{n-p}{k}+\binom{q-p}{k}
\]
and
\[
|\ha(n,q+1,k,s)| =\binom{n}{k}-\binom{n-p+1}{k}+\binom{q+2-p}{k}+\sum_{i=2}^{k-1} \binom{q-p+1}{i-1}\binom{n-q-1}{k-i}.
\]
Then,
\begin{align*}
&|\ha(n,q,k,s)|-|\ha(n,q+1,k,s)| \\[5pt]
=& \binom{n-p}{k-1}-\binom{q+1-p}{k-1}-\binom{q-p}{k-1}- \sum_{i=2}^{k-1} \binom{q-p+1}{i-1}\binom{n-q-1}{k-i}\\[5pt]
=& \binom{n-p}{k-1}-\binom{q+1-p}{k-1}-\binom{q-p}{k-1}- \binom{n-p}{k-1} +\binom{n-q-1}{k-1}+\binom{q-p-1}{k-1}\\[5pt]
= & \binom{n-q-1}{k-1} +\binom{q-p-1}{k-1} -\binom{q+1-p}{k-1}-\binom{q-p}{k-1}\\[5pt]
= & \binom{n-q-1}{k-1} -\binom{q-p-1}{k-2} -\binom{q+1-p}{k-1}.
\end{align*}
For $n\geq  2q$, the first term is at least $\binom{q+1}{k-1}$. For $p\geq 1$, the sum of the negative terms is less than
\[
\binom{q}{k-1} +\binom{q}{k-2} = \binom{q+1}{k-1}
\]
proving that the expression is positive.
\end{proof}

Now we are in position to prove Theorem \ref{main-2}.

\begin{proof}[Proof of Theorem \ref{main-2}]
 Let $\hf$ be a shifted $k$-graph with $\nu(\hf)=s$, $\omega(\hf)\geq q$ and $|\hf|=m(n,q,k,s)$. If $q\leq s+k-1$, then Proposition \ref{prop-2} implies  either $\hf\subset \mathcal{E}(n,k,s)$ or $\omega(\hf)\geq s+k$. By Proposition \ref{prop-3}, it follows that
 \[
 m(n,q,k,s)= \max\left\{|\mathcal{E}(n,k,s)|,\max_{t\geq s+k} m^*(n,t,k,s)\right\} = |\mathcal{E}(n,k,s)|.
 \]

If $s+k\leq q\leq sk+k-1$ and $n\geq 8k^2s$,
then Theorem \ref{main-1} and Proposition \ref{prop-3} imply that
\[
m(n,q,k,s) =\max_{t\geq q}  m^*(n,t,k,s) =  m^*(n,q,k,s).
\]
\end{proof}

\section{The extremal number for large $q$ and small $n$}

In this section, we consider the problem for large $q$ and small $n$, where the extremal number
is attained by the family $\binom{[sk+k-1]}{k}$.

\begin{proof}[Proof of Theorem \ref{main-4}]
Let $\hf\subset \binom{[n]}{k}$ be a $k$-graph with $\nu(\hf)\leq s$ and $\omega(\hf)=q=sk+k-l$. We may assume that $\hf$ is shifted.  By shiftedness, we have $\binom{[q]}{k}\subset \hf$ and $\{q-k+1,q-k+2,\ldots,q+1\}\notin \hf$.
 
\begin{claim}\label{claim5-1}
For any $F\in \hf$,
\[
|F\cap [q+1, n]|\leq \min\{k,l\}-1.
\]
\end{claim}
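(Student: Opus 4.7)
The plan is to assume for contradiction that some $F \in \hf$ has $|F \cap [q+1, n]| \geq \min\{k, l\}$, and to derive a contradiction with either $\omega(\hf) = q$ or $\nu(\hf) \leq s$. I would split into two cases depending on whether $F$ is disjoint from $[q]$ or not.

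In the first case, $F \subset [q+1, n]$, and I would exploit shiftedness to pull $F$ down to the witness $G := \{q-k+2, q-k+3, \ldots, q+1\}$. Writing $F = \{f_1 < \cdots < f_k\}$ with $f_1 \geq q+1$ gives $f_i \geq q + i$, while the $i$-th element of $G$ is $q-k+1+i \leq q + i \leq f_i$, so $G \prec F$ and hence $G \in \hf$. Next I would observe that every $H \in \binom{[q+1]}{k}$ satisfies $H \prec G$ (its $i$-th smallest element is at most $q+1-(k-i) = q-k+1+i$), so combined with the already-established $\binom{[q]}{k} \subset \hf$, shiftedness forces $\binom{[q+1]}{k} \subset \hf$, contradicting $\omega(\hf) = q$.

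In the second case, $F \cap [q] \neq \emptyset$ so $|F \cap [q+1, n]| < k$, which together with the assumption $|F \cap [q+1, n]| \geq \min\{k, l\}$ forces $l < k$ and $|F \cap [q+1, n]| \geq l$. Then a routine count using $q = sk + k - l$ gives
\[
|[q] \setminus F| \;=\; q - \bigl(k - |F \cap [q+1, n]|\bigr) \;\geq\; sk + k - l - k + l \;=\; sk,
\]
so I can pick $s$ pairwise disjoint $k$-sets $E_1, \ldots, E_s$ inside $[q] \setminus F$, each belonging to $\binom{[q]}{k} \subset \hf$. Together with $F$ they form a matching of size $s+1$ in $\hf$, contradicting $\nu(\hf) \leq s$.

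The main (though modest) obstacle is the first case: one must verify that a single $k$-subset of $\hf$ lying entirely above $q$ already propagates via the partial order $\prec$ to the whole clique $\binom{[q+1]}{k}$. Choosing $G = \{q-k+2, \ldots, q+1\}$ as an intermediate witness makes this two-step shift transparent. Both cases combined yield $|F \cap [q+1, n]| \leq \min\{k, l\} - 1$, as required.
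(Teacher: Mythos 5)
Your proof is correct and follows essentially the same route as the paper's: the case $F\cap[q]=\emptyset$ is ruled out by shifting down to the witness $\{q-k+2,\ldots,q+1\}$ and contradicting $\omega(\hf)=q$, and the remaining case (which forces $l<k$) is handled by the same matching argument inside $[q]\setminus F$ using $|[q]\setminus F|\geq sk$ together with $\binom{[q]}{k}\subset\hf$. Incidentally, your witness is the correct $k$-set; the paper's displayed set $\{q-k+1,\ldots,q+1\}$ is a typo with $k+1$ elements.
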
 
\begin{proof}
We first show that $F\cap [q]\neq \emptyset$. Otherwise, if $F\cap [q]=\emptyset$, then $\{q-k+1,q-k+2,\ldots,q+1\}\prec F$ and  $\{q-k+1,q-k+2,\ldots,q+1\}\notin \hf$, which contradicts the fact that $\hf$ is shifted. 

If $l<k-1$, then $|F\cap [q]|\geq k-l+1$. Otherwise, if $|F\cap [q]|\leq k-l$, then $|[q]\setminus F|\geq sk+k-l-(k-l)=sk$. There are $s$ pairwise disjoint sets $F_1,\ldots,F_s$ in $[q]\setminus F$. It follows that $F,F_1,\ldots,F_s$ form a matching of size $s+1$, which contradicts $\nu(\hf)\leq s$.
Hence 
\[
|F\cap [q]|\geq \max\{k-l,0\}+1
\]
and
\[
|F\cap [q+1, n]|=k-|F\cap [q]|\leq \min\{k,l\}-1.
\]
\end{proof}

By Claim \ref{claim5-1}, we have 
\begin{align}\label{eq5-0}
|\hf| =\sum_{T\subset [q+1,n]\atop |T|\leq \min\{k,l\}-1} \hf(T,[q]).
\end{align}
Now we distinguish two cases.

Case 1. For each $i=1,\ldots,l-1$, $|\hf(q+l-i,[q])|>\binom{q}{k-1}-\binom{q-i(k-1)}{k-1}$.

We first claim that $\hf(q+l,[q])=\emptyset$. Suppose not, let $E_l\in \hf(q+l,[q])$. Since $|\hf(q+l-1,[q])-E_l| > 0$, we can choose $E_{l-1}$ from $\hf(q+l-1,[q])-E_l$. Similarity, we can choose $E_{i}$ from $\hf(q+i,[q])-(E_l\cup E_{l-1}\cup \ldots E_{i+1})$ for $i=l-1,\ldots,1$. Then
\[
M=\{E_1\cup\{q+1\}, E_2\cup\{q+2\},\ldots,E_l\cup\{q+l\}\}
\]
is a matching of size $l$ in $\hf$. Since $\binom{[q]}{k}\subset \hf$ and $q-l(k-1)=(s+1-l)k$,  $M$ can be extended into a matching of size $s+1$, contradicting  the fact that $\nu(\hf)\leq s$.  Thus $\hf(q+l,[q])=\emptyset$.

From $\hf(q+l,[q])=\emptyset$ and $\hf$ is shifted, it follows that  $F\cap [q+l,n]= \emptyset$ for any $F\in \hf$. Therefore,
\[
|\hf| \leq \binom{q+l-1}{k} =\binom{(s+1)k-1}{k}.
\]

Case 2. There is an $i\in [1,l-1]$ with
\begin{align}\label{eq5-1}
|\hf(q+l-i,[q])|\leq \binom{q}{k-1}-\binom{q-i(k-1)}{k-1}.
\end{align}

Let $T\in \binom{[q+1,n]}{a}$ with $T\cap [q+l-i,n]\neq \emptyset$. Define
\[
\hb(T) = \{[q]\setminus E \colon E\in \hf(T,[q])\}
\]
and
\[
\hb(q+l-i) =  \{[q]\setminus E \colon E\in \hf(q+l-i,[q])\}.
\]

Let $\partial^{(a-1)} \hb(T)$ be the $(a-1)$-th shadow of $\hb(T)$.
\begin{claim}
$$ \partial^{(a-1)} \hb(T)\subset \hb(q+l-i)$$
\end{claim}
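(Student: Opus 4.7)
The plan is to reduce the shadow inclusion to a single invocation of shiftedness. Unwinding definitions, if $|T|=a$ then $\hb(T)$ is a family of $(q-k+a)$-subsets of $[q]$, and $\hb(q+l-i)$ is a family of $(q-k+1)$-subsets of $[q]$, so the iterated shadow $\partial^{(a-1)}\hb(T)$ lives in the same ground set and has the correct uniformity. Pick an arbitrary $B\in \partial^{(a-1)}\hb(T)$; then there is $D\in \hb(T)$ with $B\subset D$ and $|D\setminus B|=a-1$, and unpacking $D$ yields $E\subset [q]$ with $|E|=k-a$, $E\cup T\in \hf$, and $D=[q]\setminus E$. Setting $E'=[q]\setminus B$, we have $E\subset E'\subset[q]$ with $|E'|=k-1$ (since $B\subset D=[q]\setminus E$ is disjoint from $E$), and the desired $B\in \hb(q+l-i)$ amounts to showing $E'\cup\{q+l-i\}\in \hf$.

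The key step is to compare $G:=E'\cup\{q+l-i\}$ to $F:=E\cup T\in \hf$ in the shifting partial order $\prec$. List both $k$-sets in increasing order: in $F$ the first $k-a$ slots are occupied by $E\subset [q]$ and the last $a$ by $T\subset[q+1,n]$, while in $G$ the first $k-1$ slots are occupied by $E'\subset [q]$ and the last by $q+l-i$. For $j\leq k-a$, the inclusion $E\subset E'$ forces the $j$-th smallest element of $E'$ to be at most the $j$-th smallest element of $E$. For $k-a< j\leq k-1$, every element in the corresponding slot of $G$ lies in $[q]$, whereas the corresponding slot of $F$ lies in $T\subset[q+1,n]$. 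Finally, the hypothesis $T\cap[q+l-i,n]\neq\emptyset$ guarantees that the largest element of $T$ is at least $q+l-i$, settling the last slot. Hence $G\prec F$, and by shiftedness $G\in \hf$, which is exactly $B\in \hb(q+l-i)$.

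The entire argument is a single shifting comparison once the definitions are unfolded. The only substantive point is the last-slot inequality, which is precisely where the assumption $T\cap[q+l-i,n]\neq\emptyset$ is used; the remaining two blocks of coordinates are routine bookkeeping, and I expect no genuine obstacle beyond keeping the three index ranges straight.
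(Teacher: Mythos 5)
Your proof is correct and is essentially the paper's own argument: both unwind the definitions and conclude via shiftedness under the partial order $\prec$, you by the single comparison $E'\cup\{q+l-i\}\prec E\cup T$, the paper by picking $x\in T\cap[q+l-i,n]$ and noting $(B\setminus B_0)\prec (T\setminus\{x\})$ together with $\hf(x,[q])\subset \hf(q+l-i,[q])$. The slot-by-slot verification you give is exactly the needed bookkeeping, so there is no gap.
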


\begin{proof}
Let $B_0 \in \partial^{(a-1)} \hb(T)$. Then there is a $B\in \hb(T)$ such that $B_0\subset B$.
By the definition of $\hb(T)$, we have $[q]\setminus B \in \hf(T,[q])$. Note that $T\cap [q+l-i,n]\neq \emptyset$.
There exists $x\in T$ with $x\geq q+l-i$. Since $\hf$ is shifted and $(B\setminus B_0) \prec (T\setminus \{x\}) $,
we have $([q]\setminus B) \cup (B\setminus B_0) = [q]\setminus B_0 \in \hf(x,[q]) \subset \hf(q+l-i,[q])$.
\end{proof}

Now we claim that
\begin{align}\label{eq5-2}
|\hf(T,[q])|= |\hb(T)|\leq \binom{q}{k-a}-\binom{q-i(k-1)}{k-a} = \sum_{j=1}^{i(k-1)} \binom{q-j}{q-j-k+a+1}.
\end{align}
Otherwise, by Kruskal-Katona Theorem,
\[
 \partial^{(a-1)} \hb(T) > \sum_{j=1}^{i(k-1)} \binom{q-j}{q-j-k+2} =\sum_{j=1}^{i(k-1)} \binom{q-j}{k-2},
\]
which contradicts the condition that
\[
|\hf(q+l-i,[q])|\leq \binom{q}{k-1}-\binom{q-i(k-1)}{k-1}.
\]

Now by shiftedness, we have
\[
\sum_{j=q+l-i}^n |\hf(j,[q])| \leq (n-q) |\hf(q+l-i,[q])|.
\]
By \eqref{eq5-1}, it follows that
\begin{align*}
\sum_{j=q+l-i}^n |\hf(j,[q])| &\leq (n-q)\left(\binom{q}{k-1}-\binom{q-i(k-1)}{k-1}\right)\\[5pt]
&\leq (n-q) i(k-1) \binom{q-1}{k-2}\\[5pt]
&\leq \frac{(n-q)i(k-1)^2}{q}\binom{q}{k-1}.
\end{align*}
Since $q=sk+k-l \geq sk+k-\frac{s}{3k} > (k-1)s$ and $n\leq q+\frac{s}{3k}$,
\[
\frac{(n-q)(k-1)^2}{q} \leq \frac{1}{2}.
\]
Therefore,
\begin{align}\label{eqn5-1}
\sum_{j=q+l-i}^n |\hf(j,[q])| \leq \frac{i}{2}\binom{q}{k-1}.
\end{align}

For each $a=2,\ldots, \min\{k,l\}-1$, by \eqref{eq5-2} we have
\begin{align*}
\sum_{T\in \binom{[q+1,n]}{a}\atop T\cap [q+l-i,n]\neq \emptyset} |\hf(T,[q])| &\leq \binom{n-q}{a} \left( \binom{q}{k-a}-\binom{q-i(k-1)}{k-a}\right)\\[5pt]
&\leq i(k-1)\binom{n-q}{a} \binom{q-1}{k-a-1}.
\end{align*}
Since $\binom{n}{k}\leq \left(\frac{en}{k}\right)^k$, $q\geq k+(k-1)s$ and  $n\leq q+\frac{s}{3k}$, it follows that
\begin{align}\label{eqn5-2}
\sum_{T\in \binom{[q+1,n]}{a}\atop T\cap [q+l-i,n]\neq \emptyset} |\hf(T,[q])| & \leq i(k-1)\left(\frac{e(n-q)}{a}\right)^a \cdot\frac{(k-1)(k-2)\cdots(k-a)}{q(q-k+a)\cdot(q-k+2)} \cdot \binom{q}{k-1}\nonumber\\[5pt]
& \leq i(k-1)\left(\frac{e(n-q)}{sa}\right)^a \binom{q}{k-1}\nonumber\\[5pt]
&\leq \frac{i}{2^a} \binom{q}{k-1}.
\end{align}

Note that
\begin{align}\label{eq5-3}
\sum_{j=q+1}^{q+l-i-1} |\hf(j,[q])| \leq (l-1-i) \binom{q}{k-1}
\end{align}
and
\begin{align}\label{eq5-4}
\sum_{T\in \binom{[q+1,n]}{a}\atop T\subset [q+1, q+l-i-1]} |\hf(T,[q])| \leq \binom{l-1}{a} \binom{q}{k-a}
\end{align}
for each $a=2,\ldots, \min\{k,l\}-1$.
Moreover, by \eqref{eqn5-1} and \eqref{eqn5-2} we obtain
\begin{align}\label{eq5-5}
\sum_{a=1}^{\min\{k,l\}-1}\sum_{T\in \binom{[q+1,n]}{a}\atop T\cap [q+l-i,n]\neq \emptyset} |\hf(T,[q])| \leq \left(\sum_{a=1}^\infty \frac{1}{2^a} \right) i\binom{q}{k-1} = i\binom{q}{k-1}.
\end{align}
By \eqref{eq5-0}, \eqref{eq5-3}, \eqref{eq5-4} and \eqref{eq5-5}, we have
\begin{align*}
|\mathcal{F}|& = \binom{q}{k}+ \sum_{j=q+1}^{q+l-i-1} |\hf(j,[q])|+ \sum_{a=1}^{\min\{k,l\}-1}\sum_{T\in \binom{[q+1,n]}{a}\atop T\cap [q+l-i,n]\neq \emptyset} |\hf(T,[q])| \\[5pt]
&\qquad\qquad+\sum_{a=2}^{\min\{k,l\}-1}\sum_{T\in \binom{[q+1,n]}{a}\atop T\subset [q+1, q+l-i-1]} |\hf(T,[q])|\\[5pt]
&\leq {q\choose k}+(l-1-i){q\choose k-1}+i{q\choose k-1}+\sum_{a=2}^{\min\{k,l\}-1}{l-1\choose a}{q\choose k-a}\\[5pt]
&={sk+k-1\choose k}.
\end{align*}
\end{proof}

\section{A general cross-intersecting theorem and two special cases}

In this section, we prove a general cross-intersecting theorem and confirm Conjecture \ref{conj-2} for the cases $q=sk+k-2$ and $k=2$.

\begin{proof}[Proof of Theorem \ref{prop6-1}]
First note that $\nu(\mathcal{A})\leq s$ and the Erd\H{o}s matching conjecture for $n\geq 2(s+1)k$  in \cite{F13} imply
\begin{align*}
|\mathcal{A}|\leq{n\choose k}-{n-s\choose k},
\end{align*}
and $n\geq(l-t+1)(t+1)$ and the $t$-intersecting property imply
\begin{align}\label{6-3}
|\mathcal{B}|\leq{n-t\choose l-t}.
\end{align}
These prove \eqref{6-1} for the case $s=t$. Suppose $s>t$. If $|\mathcal{A}|\leq{n\choose k}-{n-t\choose k}$ then \eqref{6-3} implies \eqref{6-1} with $i=t$. Let $|\mathcal{A}|>{n\choose k}-{n-t\choose k}$ and define $j$ by
\begin{align}\label{6-4}
{n\choose k}-{n-j\choose k}<|\mathcal{A}|\leq{n\choose k}-{n-j-1\choose k},
\end{align}
and note $t\leq j\leq s-1$. Let $\widetilde{\mathcal{A}}$ ($\widetilde{\mathcal{B}}$) denote the first $|\mathcal{A}|$ ($|\mathcal{B}|$) elements of ${[n]\choose k}$ (${[n]\choose l}$), respectively in the lexicographic order. In view of $n\geq k+l$ and the Kruskal-Katona Theorem (or Hilton's Lemma, see \cite{FK17,Hilton76}) $\widetilde{\mathcal{A}}$ and $\widetilde{\mathcal{B}}$ are cross-intersecting. By the definition of the lexicographic order and \eqref{6-4}, $A\cap[j+1]\neq\emptyset$ for all $A\in\widetilde{\mathcal{A}}$ which guarantees $\nu(\widetilde{\mathcal{A}})\leq j+1\
\leq s$. Also, the first part of \eqref{6-4} guarantees that all $k$-sets $A\in{[n]\choose k}$ satisfying $A\cap[j]\neq\emptyset$ are in $\widetilde{\mathcal{A}}$. This in turn via the cross-intersecting property implies
\begin{align}\label{6-5}
[t]\subset[j]\subset B \text{ for every } B\in\widetilde{\mathcal{B}}.
\end{align}
In particular, $\widetilde{\mathcal{B}}$ is $t$-intersecting. Thus $\widetilde{\mathcal{A}}$, $\widetilde{\mathcal{B}}$ satisfy the conditions imposed on $\mathcal{A}$ and $\mathcal{B}$.

Let $V\subset [n]$. Recall the definition:
\begin{align*}
\mathcal{F}(\overline{V})=\{F\in\mathcal{F}: F\cap V=\emptyset\},\qquad \mathcal{F}(V)=\{F\setminus V: V\subset F\in\mathcal{F}\}.
\end{align*}
Then
\[|\widetilde{\mathcal{A}}(\overline{[j]})|=|\widetilde{\mathcal{A}}|-\left({n\choose k}-{n-j\choose k}\right)\]
and
$(j+1)\in A$ for all $A\in\widetilde{\mathcal{A}}(\overline{[j]})$. Also $|\widetilde{\mathcal{B}}([j+1])|={n-j-1\choose l-j-1}$. Define $\mathcal{B}_0=\{B\setminus [j]: B\in\widetilde{\mathcal{B}}, j+1\notin B\}$. In view of \eqref{6-5}
\[\mathcal{B}_0\subset{[j+2,n]\choose l-j} \text{ and } |\mathcal{B}_0|=|\widetilde{\mathcal{B}}|-{n-j-1\choose l-j-1}.\]
Set
\[\mathcal{A}_0=\{A\setminus\{j+1\}: A\in \widetilde{\mathcal{A}}(\overline{[j]})\}\subset{j+2, n\choose k-1},\]
$|\mathcal{A}_0|=|\widetilde{\mathcal{A}}|-\left({n\choose k}-{n-j\choose k}\right)$.
\begin{claim}
\[|\mathcal{A}_0|+\beta|\mathcal{B}_0|\leq \max\left\{{n-j-1\choose k-1}, \beta{n-j-1\choose l-j}\right\}.\]
\end{claim}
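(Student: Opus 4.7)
The plan is to reduce the claim to a classical density bound for cross-intersecting families on the ground set $[j+2,n]$, which has $N := n-j-1$ elements; note $N \geq (k-1)+(l-j)$ since $n \geq k+l$.

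First I would verify that $\mathcal{A}_0 \subset \binom{[j+2,n]}{k-1}$ and $\mathcal{B}_0 \subset \binom{[j+2,n]}{l-j}$ are cross-intersecting. Given $A_0 \in \mathcal{A}_0$ and $B_0 \in \mathcal{B}_0$, by construction $A := A_0 \cup \{j+1\} \in \widetilde{\mathcal{A}}$ and $B := B_0 \cup [j] \in \widetilde{\mathcal{B}}$. Since $A_0, B_0 \subset [j+2,n]$ and $j+1 \notin B$, a direct set-theoretic computation yields $A \cap B = A_0 \cap B_0$, and the cross-intersecting property of $(\widetilde{\mathcal{A}}, \widetilde{\mathcal{B}})$ forces $A_0 \cap B_0 \neq \emptyset$.

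Next I would apply the classical density bound: for any cross-intersecting pair $\mathcal{C} \subset \binom{[N]}{a}$ and $\mathcal{D} \subset \binom{[N]}{b}$ with $N \geq a+b$,
\[
\frac{|\mathcal{C}|}{\binom{N}{a}} + \frac{|\mathcal{D}|}{\binom{N}{b}} \leq 1.
\]
The quickest derivation picks a uniformly random permutation $\sigma$ of the ground set and takes $C_\sigma := \{\sigma(1),\ldots,\sigma(a)\}$, $D_\sigma := \{\sigma(a+1),\ldots,\sigma(a+b)\}$; these sets are disjoint, each marginally uniform over its layer, and the joint event $\{C_\sigma \in \mathcal{C} \text{ and } D_\sigma \in \mathcal{D}\}$ is ruled out by cross-intersection, so the two marginal probabilities sum to at most one.

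To finish, set $\alpha := \binom{N}{k-1}/\binom{N}{l-j}$ and apply the density bound to $(\mathcal{A}_0, \mathcal{B}_0)$. If $\beta \leq \alpha$, multiplying through by $\binom{N}{k-1}$ gives $|\mathcal{A}_0| + \alpha|\mathcal{B}_0| \leq \binom{N}{k-1}$, hence $|\mathcal{A}_0| + \beta|\mathcal{B}_0| \leq \binom{N}{k-1}$. Otherwise $\beta > \alpha$, and multiplying by $\beta\binom{N}{l-j}$ gives $(\beta/\alpha)|\mathcal{A}_0| + \beta|\mathcal{B}_0| \leq \beta\binom{N}{l-j}$, hence $|\mathcal{A}_0| + \beta|\mathcal{B}_0| \leq \beta\binom{N}{l-j}$. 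Either way the asserted max bound holds. The only conceptual step is recognizing the applicability of the density bound on the reduced ground set; the linear-combination wrap-up is routine, so I do not anticipate a serious obstacle.
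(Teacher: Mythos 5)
Your proposal is correct and follows essentially the same route as the paper: reduce to the cross-intersecting pair $\mathcal{A}_0,\mathcal{B}_0$ on the ground set $[j+2,n]$, establish the fractional bound $\tfrac{|\mathcal{A}_0|}{\binom{n-j-1}{k-1}}+\tfrac{|\mathcal{B}_0|}{\binom{n-j-1}{l-j}}\leq 1$, and then pass to the weighted maximum. The only cosmetic difference is that you justify the fractional bound by a Katona-style random permutation double count, while the paper counts via the biregular bipartite disjointness graph between $\binom{[j+2,n]}{k-1}$ and $\binom{[j+2,n]}{l-j}$; the two arguments are equivalent.
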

\begin{proof}
Since $\widetilde{\mathcal{A}}$, $\widetilde{\mathcal{B}}$ are cross-intersecting, $\mathcal{A}_0$, $\mathcal{B}_0$ are cross-intersecting as well. Consider the bipartite graph $G$ with vertex-set $\left({[j+2,n]\choose k-1}, {[j+2, n]\choose l-j}\right)$ and the two vertices $E, F$ forming an edge iff $E\cap F=\emptyset$. Then $G$ is a regular graph and $\mathcal{A}_0\cap\mathcal{B}_0$ is an independent set. This easily implies
\[|\mathcal{A}_0|+\beta|\mathcal{B}_0|\leq \max\left\{{n-j-1\choose k-1}, \beta{n-j-1\choose l-j}\right\}.\]
\end{proof}
To derive \eqref{6-1} from the claim is easy. If the first term gives the maximum then
\begin{align*}
|\widetilde{\mathcal{A}}|+\beta|\widetilde{\mathcal{B}}|=&{n\choose k}-{n-j\choose k}+\beta{n-j-1\choose l-j-1}+|\mathcal{A}_0|+\beta|\mathcal{B}_0|\\[5pt]
\leq&{n\choose k}-{n-j-1\choose k}+\beta{n-j-1\choose l-j-1}.
\end{align*}
If the second term gives the maximum we obtain in the same way
\[|\widetilde{\mathcal{A}}|+\beta|\widetilde{\mathcal{B}}|\leq {n\choose k}-{n-j\choose k}+\beta{n-j-1\choose l-j-1}+\beta{n-j-1\choose l-j}={n\choose k}-{n-j\choose k}+\beta{n-j\choose l-j}.\]
\end{proof}

\begin{proof}[Proof of Theorem \ref{specialcase-2}]
We may assume that $\hf$ is shifted. If $\omega(\hf) = sk+k-1$, then Proposition \ref{prop-2} implies that $|\hf|=\binom{sk+k-1}{k}$. Hence, we may further assume that $\omega(\hf) = sk+k-2$. Then $\mathcal{F}={[q]\choose k}\cup\mathcal{F}_{k-1}$. Recall that for each $i\in [q+1,n]$,
\[
\hf(i,[q])=\left\{E\in{[q]\choose k-1}: E\cup\{i\} \in\mathcal{F}\right\}.
\]
 Let $\ha = \hf(q+2,[q])$ and $\hb=\hf(q+1,[q])$. The following simple result is straightforward.
\begin{claim}
$\mathcal{A}$ is intersecting, $\mathcal{A}$ and $\mathcal{B}$ are cross-intersecting and
\begin{align*}
|\mathcal{F}_{k-1}|=\sum_{i=q+1}^n |\hf(i,[q])| \leq (n-q-1)|\mathcal{A}|+|\mathcal{B}|.
\end{align*}
\end{claim}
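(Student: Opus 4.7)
The plan is to reduce to the case $\omega(\hf)=q=sk+k-2$, verify the Claim at the end of the excerpt, and then apply Theorem \ref{prop6-1} with a non-obvious parameter choice. First I would shift $\hf$ and invoke Proposition \ref{prop-2}: either $\hf=\binom{[sk+k-1]}{k}$ (which matches the first term of the max), or $\omega(\hf)=q$. In the latter case a one-line matching argument forces $|F\cap[q]|\ge k-1$ for every $F\in\hf$: if $|F\cap[q]|\le k-2$ then $|[q]\setminus F|\ge sk$, so $[q]\setminus F$ contains a matching of size $s$ drawn from $\binom{[q]}{k}\subset\hf$, producing together with $F$ a matching of size $s+1$, contradicting $\nu(\hf)=s$. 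Hence $\hf=\binom{[q]}{k}\cup\hf_{k-1}$.

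Next I would prove the Claim stated at the end of the excerpt. With $\ha=\hf(q+2,[q])$ and $\hb=\hf(q+1,[q])$, shiftedness gives $\ha\subset\hb$ and $\hf(i,[q])\subset\ha$ for every $i\ge q+2$, which when summed over $i$ yields $|\hf_{k-1}|\le|\hb|+(n-q-1)|\ha|$. The intersecting and cross-intersecting parts follow by the template of Claim \ref{claim-3}: if $A_1,A_2\in\ha$ (respectively $A\in\ha$, $B\in\hb$) are disjoint, then $A_1\cup\{q+2\}\in\hf$ and a single shift produces $A_2\cup\{q+1\}\in\hf$ (respectively $B\cup\{q+1\}\in\hf$), while $|[q]\setminus(A_1\cup A_2)|=q-2(k-1)=(s-1)k$ leaves exactly enough room for $s-1$ further pairwise disjoint $k$-sets in $\binom{[q]}{k}\subset\hf$, yielding a matching of size $s+1$ in $\hf$, a contradiction.

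The crux is the application of Theorem \ref{prop6-1}. The naive choice $\mathcal{A}=\hb$, $\mathcal{B}=\ha$ would require $q\ge(2s+1)(k-1)$, which fails badly here. The trick is to swap the roles: take $\mathcal{A}=\ha$ (so $\nu(\mathcal{A})\le 1$ since $\ha$ is intersecting), $\mathcal{B}=\hb$ with $t=0$ (no $t$-intersecting hypothesis needed on $\hb$), ground set $[q]$, both set sizes equal to $k-1$, $s_{\ref{prop6-1}}=1$ and weight $\beta=1/(n-q-1)$. The hypothesis reduces to $q\ge\max\{2(k-1),3(k-1),k\}=3(k-1)$, satisfied because $s\ge 2$, $k\ge 3$ give $q=sk+k-2\ge 3k-2$. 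Using $\binom{q}{k-1}-\binom{q-1}{k-1}=\binom{q-1}{k-2}$, Theorem \ref{prop6-1} yields
\[
|\ha|+\tfrac{1}{n-q-1}|\hb|\le\max\left\{\tfrac{1}{n-q-1}\binom{q}{k-1},\ \tfrac{n-q}{n-q-1}\binom{q-1}{k-2}\right\}.
\]
Multiplying by $n-q-1$ and adding $\binom{q}{k}=|\binom{[q]}{k}|$ then gives
\[
|\hf|\le\max\left\{\binom{sk+k-1}{k},\ \binom{sk+k-2}{k}+\binom{sk+k-3}{k-2}(n-q)\right\},
\]
where I used $\binom{q+1}{k}=\binom{q}{k}+\binom{q}{k-1}$ and $q+1=sk+k-1$, $q-1=sk+k-3$. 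Both values are attained by $\binom{[sk+k-1]}{k}$ and $\ha(n,q,k,s)$ respectively, so equality holds. The main obstacle I anticipate is precisely this identification of the unconventional parameter assignment in Theorem \ref{prop6-1}; once that choice is made, every remaining step is a routine shifting computation.
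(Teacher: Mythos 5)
Your proposal is correct and follows essentially the same route as the paper: the paper dismisses the Claim as ``straightforward,'' and your verification (shiftedness giving $\hf(i,[q])\subset\ha\subset\hb$ for $i\geq q+2$, plus the matching argument using $q-2(k-1)=(s-1)k$ to get intersecting and cross-intersecting) is exactly the intended one. The remainder of your write-up, including the application of Theorem \ref{prop6-1} with $\mathcal{A}=\ha$, $t=0$, $s=1$, ground set $[q]$ and $\beta=1/(n-q-1)$, coincides with the paper's own proof of Theorem \ref{specialcase-2}.
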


Note that $q=sk+k-2\geq 3(k-1)$ for $s\geq 2$. By Theorem \ref{prop6-1}, we have
\begin{align*}
|\hf_{k-1}| &\leq(n-q-1)\left(|\mathcal{A}|+\frac{1}{n-q-1}|\mathcal{B}|\right)\\[5pt]
& \leq (n-q-1) \max\left\{\frac{1}{n-q-1}{q\choose k-1}, {q-1\choose k-2}+\frac{1}{n-q-1}{q-1\choose k-2}\right\}\\[5pt]
&= \max\left\{{q\choose k-1}, (n-q){q-1\choose k-2}\right\}.
\end{align*}
Thus, for all $n\geq (s+1)k$, we have
\[|\mathcal{F}|\leq \max\left\{{sk+k-1\choose k}, {sk+k-2\choose k}+{sk+k-3\choose k-2}(n-q)\right\}.\]
\end{proof}

At the end, we confirm Conjecture \ref{conj-2} for $k=2$.

\begin{proof}[Proof of Theorem \ref{specialcase-1}]
Let $\hg$ be a graph with $\nu(\hg) \leq s$ and $\omega(\hg)\geq q$. Without loss of generality we assume that $\hg$ is shifted. Clearly, we have $\binom{[q]}{2}\subset \hg$.
Consider the following matching $$E_i =(i,2s+3-i),\ 1\leq i\leq s+1.$$ Since $\nu(\hg) \leq s$, there exists some $j$ with $1\leq j\leq s+1$  such that $E_j\notin \hg$. Moreover, $\binom{[q]}{2}\subset \hg$ implies $2s+3-j>q$, i.e., $j\leq 2s+2-q$. Let $\hg_j$ be the graph with
\[
E(\hg_j) =\{(i,x)\colon 1\leq i<j,\ i<x\leq n\} \cup \binom{[2s+2-j]}{2}.
\]
From Remark 2 of \cite{AF85}, $\hg$ is a subgraph of $\hg_j$. Hence, we have
\[
|\hg| \leq |\hg_j| = \binom{2s+2-j}{2} +(j-1)(n-2s-2+j).
\]
Since $1\leq j\leq 2s+2-q$, we get
\[
|\hg| \leq \max\left\{\binom{2s+1}{2},\binom{q}{2}+(2s+1-q)(n-q)\right\}.
\]
This proves the theorem.
\end{proof}

\end{document}